\documentclass[a4paper,reqno,10pt]{amsart}
\usepackage{amsmath,amstext,amssymb,amsopn,amsthm,mathrsfs}
\usepackage{bbm}
\usepackage{enumerate}
\usepackage{enumitem}
\usepackage{xcolor}
\usepackage{tikz}
\usetikzlibrary{arrows}

\usepackage{hyperref}
\hypersetup{colorlinks = true, urlcolor = magenta, linkcolor = blue, citecolor = red}

\textwidth=15.7cm 
\textheight=23.1cm
\voffset=-1.2 cm
\hoffset=-1.94 cm

\numberwithin{equation}{section}
\allowdisplaybreaks

\newtheorem{theorem}{Theorem}[section]
\newtheorem{proposition}[theorem]{Proposition}
\newtheorem{corollary}[theorem]{Corollary}
\newtheorem{lemma}[theorem]{Lemma}
\newtheorem{definition}[theorem]{Definition}
\newtheorem{remark}[theorem]{Remark}


\theoremstyle{definition}


\numberwithin{equation}{section}


\newcommand{\N}{\mathbb{N}}
\newcommand{\Z}{\mathbb{Z}}
\newcommand{\C}{\mathbb{C}}
\newcommand{\R}{\mathbb{R}^d}
\newcommand*\RR{\mathbb{R}}

\newcommand{\EEeta}{\mathcal{E}^\eta}
\newcommand{\A}{\mathcal{A}_{\eta}}
\newcommand{\sgn}{{\rm sgn}}
\newcommand{\Hom}{\eta\in{\rm Hom}(W,\,\widehat{\mathbb Z}_2)}

\newcommand{\BMO}{\mathrm{BMO}}
\newcommand*\calE{\mathcal{E}}

\newcommand*\calD{\mathcal{D}}

\newcommand*\ind{\mathbbm{1}}
\def\a{\alpha}
\def\b{\beta}
\def\v{\varphi}
\def\s{\sigma}
\def\ve{\varepsilon}

\begin{document}

\title[Hardy and  BMO spaces  on Weyl chambers]{Hardy and  BMO spaces  on Weyl chambers} 
\author[P. Plewa]{Pawe\l{} Plewa}
\address {Wydzia\l{} Matematyki \\
         Politechnika Wroc\l{}awska\\ 
         Wyb.\,Wyspia\'nskiego 27\\ 
         50-370 Wroc\l{}aw, Poland\\   
         and Intitute of Mathematics\\
 Polish Academy of Sciences\\
and Dipartimento di Matematica\\
Politechnico di Torino} 									
\email{Pawel.Plewa@pwr.edu.pl}

\author[K. Stempak]{Krzysztof Stempak}
\address{Kie\l{}cz\'ow, Poland}      
\email{Krzysztof.Stempak@pwr.edu.pl}


\begin{abstract} 
Let $W$ be a finite reflection group associated with root system $R$ in $\mathbb R^d$. Let $C_+$ denote a positive Weyl chamber distinguished by 
a choice of $R_+$, a set of positive roots. We define and investigate Hardy and BMO spaces on $C_+$ in the framework of boundary conditions 
given by  a homomorphism $\Hom$ which  attaches the $\pm$ signs to the facets of $C_+$. Specialized to orthogonal root systems, atomic decompositions 
in $H^1_\eta$ and $h^1_\eta$ are obtained and the duality problem is also treated. 
\end{abstract}

\subjclass[2020]{Primary 46E30; Secondary 42B30.}
\keywords{Root system, finite reflection group, Weyl chamber,  Hardy space, bounded mean oscillation space.} 
\thanks{Pawe{\l} Plewa acknowledges the financial support of Compagnia di San Paolo.}
\maketitle


\section{Introduction} \label{sec:intro}
Chang, Krantz and Stein \cite{CKS} investigated Hardy spaces on smooth domains in $\R$. Later on Chang, Dafni and Stein \cite{CDS} substantially enhanced the 
theory by introducing new distribution spaces appropriate to the Dirichlet and Neumann  problems on smooth bounded domains in $\R$. Smooth domains  
considered in \cite{CKS} included bounded Lipschitz domains, bounded domains with $C^\infty$ boundary, and (unbounded) special Lipschitz domains. For unbounded 
domains an important step toward further development of the theory of Hardy spaces was done by Auscher and Russ \cite{AR}, where strongly Lipschitz domains 
were considered (to be precise, this was done in a more general setting of elliptic second-order divergence operators). See also \cite{CMT} for the special 
Lipschitz domain context and Section \ref{sub:Hp}, where a brief account of the development of the theory of Hardy spaces on general subdomains of $\R$ is presented.

As an illustrative example, the theory of Hardy spaces on the upper half-spaces $\R_+=\{(x',x_d)\colon x'\in\mathbb{R}^{d-1},\,\, x_d>0\}$, $d\ge2$,  
was outlined in \cite[Section 1]{CKS}; see also \cite{ART} in the context of Hardy-Sobolev spaces. This open domain can be seen as a Weyl chamber 
in the framework of the (simplest possible) root system in $\R$, $R=\{-e_d,e_d\}$, $e_d=(0,\ldots,0,1)$. Consequently, the outlined theory was intimately 
connected with the relevant theory on $\R$ through the reflection $(x',x_d)\mapsto(-x',x_d)$, so that the reflection group staying behind was, up to an 
isomorphism, $\Z_2$.

The present paper includes in the investigation reflections coming from an arbitrary finite reflection group acting on $\R$. Speaking in a different way, 
apart from the groups of dilations and rotations which are naturally involved in the theory of Hardy spaces on $\R$,  we also include finite reflection 
groups through the symmetries they generate. 

In fact, our study is devoted to the theory of Hardy and  bounded mean oscillation spaces (BMO spaces for short) on Weyl chambers. Geometrically, 
for a given finite reflection group, Weyl chambers are open polyhedral cones in $\R$. They appear as connected open components of the set which emerges 
from $\R$ after removing all involved reflection hyperplanes. See Section \ref{sec:prel} for details. 

In our definition of Hardy and BMO spaces on a distinguished Weyl chamber $C_+$, we apply a procedure suggested earlier in \cite{S1,S2} by one of the authors. 
The procedure takes into account boundary conditions imposed on the facets  (flat parts of the boundary) of $C_+$. This is encoded by a homomorphism 
$\eta$ from $W$, the involved finite reflection group, into $\widehat{\mathbb Z}_2=\{-1,1\}$. In fact, our definition agrees with the commonly used definition  
of Hardy spaces associated with a given self-adjoint operator, for different classes of such operators. Namely, in the case of proposed definition, 
the resulting Hardy space $H^1_{\eta}(C_+)$ coincides with $H^1_{-\Delta^+_{\eta}}(C_+)$, where $-\Delta^+_{\eta}$ denotes the nonnegative self-adjoint 
extension of the (minus) Laplacian on $C_+$, related to the boundary conditions prescribed by $\eta$. Notably, for the distinguished homomorphisms 
${\rm triv}\equiv1$ and ${\rm sgn}={\rm det}$, $-\Delta^+_{{\rm triv}}$ and $-\Delta^+_{{\rm sgn}}$ are the Neumann and Dirichlet Laplacians 
on $C_+$, respectively, and the $\eta$-Laplacians $-\Delta^+_{\eta}$ are in between. See \cite{S2} and Section \ref{sec:semi} for details.

Our considerations  are initially situated in the setting of a general root system in $\R$. Then we narrow the investigation to orthogonal root systems. 
In this framework, where the involved geometry is simplified and the corresponding Weyl chamber is $\R_{+,k}=\RR^{d-k}\times(0,\infty)^k$, $1\le k\le d$, 
we prove atomic decompositions for functions in $H^1_\eta(\R_{+,k})$ and $h^1_\eta(\R_{+,k})$. This is the contents of Theorems \ref{thm:1}
and \ref{thm:1(loc)}; in some sense these results partly generalize those from \cite{CKS}. It is worth noting that in these theorems we include 
characterizations in terms of extensions supported in specific regions depending on $\eta$. Accordingly, we continue investigation in this framework 
discussing $\eta$-$\BMO$ spaces and proving Theorems \ref{thm:2} and \ref{thm:2(loc)}, where also intrinsic characterizations of $\eta$-$\BMO$ spaces 
are included. It is interesting to observe a natural duality between obtained results for $\eta$-Hardy and $\eta$-$\BMO$ spaces; see, for instance 
Theorems \ref{thm:1} and \ref{thm:2}, Theorems \ref{thm:1(loc)} and \ref{thm:2(loc)}, and Propositions \ref{prop:rel} and \ref{prop:relBMO}. 
Finally, all these results are then braced together in Theorems \ref{thm:3} and \ref{thm:3(loc)}, where relevant duality results are proved. We also verify that the introduced spaces are distinct for different $\eta$'s. It is also worth mentioning that for the two distinguished homomorphisms, ${\rm triv}$ and ${\rm sgn}$, 
the resulting spaces coincide with the well known \textit{extension by zero spaces} $H^1_z(\R_{+,k})$, $h^1_z(\R_{+,k})$, $\BMO_z(\R_{+,k})$ and 
${\rm bmo}_z(\R_{+,k})$, and the \textit{restriction spaces} $H^1_r(\R_{+,k})$, $h^1_r(\R_{+,k})$, $\BMO_r(\R_{+,k})$ and 
${\rm bmo}_r(\R_{+,k})$, respectively; see Corollaries \ref{rem:com2} and \ref{rem:com_bmo}.

The paper is organized as follows. In Section \ref{sec:prel} preliminaries on finite reflection groups and associated concepts are gathered. At the 
beginning of Section \ref{sec:semi} a general procedure of defining $\eta$-function/distribution spaces on Weyl chambers is described and then 
$\eta$-Hardy spaces are discussed. In Section \ref{sec:at} we focus on orthogonal root systems and prove that the functions from the $\eta$-Hardy spaces 
$H^1_\eta(\R_{+,k})$ and $h^1_\eta(\R_{+,k})$ possess atomic decompositions. Section \ref {sec:BMO} is devoted to introduction and investigation 
of the $\eta$-$\BMO$ spaces, $\BMO_\eta(\R_{+,k})$ and ${\rm bmo}_\eta(\R_{+,k})$, and then duality results are discussed. 


\textbf{Notation and terminology}. The spaces $L^p(C_+)$, $0< p\le\infty$, are considered with respect to Lebesgue measure on $C_+$; all functions are Lebesgue 
measurable and complex-valued. By $\ind_A$ we denote the characteristic function of a (measurable) subset $A\subset\R$. The symbol $\langle\cdot,\cdot\rangle$ means the inner product in $\R$, but it is  also used for pairing functionals and testing functions; this should not lead to a confusion. Cubes are always the Euclidean open cubes with sides  parallel to the coordinate axes and $l(Q)$ stands for the sidelength of a cube $Q$. $e_1,\ldots,e_d$ denote the unit vectors of coordinate axes in $\R$. Saying that a function $F$ on $\R$ is supported in a  subset $A\subset\R$ obviously means that $F=0$ a.e. on the complement $A^c$.
When writing estimates, for nonnegative $X$ and $Y$ we will frequently use the notation $X \lesssim Y$ to indicate that $X \le C Y$ with a positive constant 
$C$ independent of significant quantities. We shall write $X \simeq Y$ when simultaneously $X \lesssim Y$ and $Y \lesssim X$. If $N,M$ are linear spaces 
with $N\subset M$, and both $N$ and $M$ are equipped with topologies (notably $N$ and $M$ are normed spaces), then writing $N\hookrightarrow M$ indicates 
that the identity map ${\rm Id}\colon N\to M$ is continuous, i.e. $N$ is continuously embedded in $M$. In the normed/quasi-normed setting this simply means 
that $\|x\|_M\lesssim\|x\|_N$ for $x\in N$, and the case of contraction (when the involved $C=1$) will be signalized by writing $N\hookrightarrow_1 M$.

\section{Preliminaries on finite reflection groups} \label{sec:prel}

Let  $R$ be a root system in the Euclidean space $\R$, that is a finite set of nonzero vectors (called roots) such that for every $\a\in R$ we have 
$\s_\a(R)=R$, where
$$
\s_\a(x)=x-\frac{2\langle \a,x\rangle}{\langle \a,\a\rangle}\a,  \qquad x\in\R,
$$
is the orthogonal reflection in $\langle \a\rangle^\bot$, the reflection hyperplane orthogonal to $\a$. The dimension of ${\rm span}\,(R)$ is called the 
\textit{rank} of $R$ and is denoted ${\rm rank}\,(R)$. If, in addition, for every $\a\in R$ we have $R\cap\mathbb{R}\a=\{\a,-\a\}$, then $R$ is called 
\textit{reduced}; throughout we assume root systems to be reduced without further mention.

The \textit{finite reflection group} $W=W(R)$ \textit{associated with} $R$ (\textit{reflection group} for short) is the subgroup of $O(\mathbb R^d)$
generated by the reflections $\s_\a$, $\a\in R$. The set $\R\setminus\bigcup_{\a\in R}\langle \a\rangle^\bot$ splits into an even number (equal to $|W|$)
of connected open components called the \textit{Weyl chambers}. $W$ acts (simply transitively) on the set of Weyl chambers and hence they are mutually
congruent. The action follows from the fact that $W$ permutes the reflection hyperplanes (for every $g\in W $, $g$ permutes the set of the reflection
hyperplanes by the rule $g\cdot\langle\a\rangle^\bot=\langle g\a\rangle^\bot$) and so, $W$ also permutes the set of connected components of
$\R\setminus\bigcup_{\a\in R}\langle \a\rangle^\bot$. A choice of $x_0\in \R$ such that  $\langle \a,x_0\rangle\neq0$ for every $\a\in R$, gives the
partition $R=R_+\sqcup (-R_+)$, where $R_+=\{\a\in R\colon \langle \a,x_0\rangle>0\}$. $R_+$ is then referred to as the \textit{set of positive roots}.
The partition distinguishes the chamber $C_+=\{x\in\R\colon \forall\, \a\in R_+ \,\,\,\langle x,\a\rangle>0\}$, which is called the \textit{positive Weyl chamber}.

Geometrically, as an intersection of a finite number of open half-spaces with supporting hyperplanes passing through the origin, $C_+$ is an \textit{open polyhedral cone} in $\R$. Notably, when $d\ge2$, $C_+$ is an example of a \textit{special Lipschitz domain}, i.e., up to a rotation, the domain above 
the graph of a Lipschitz function defined on $\mathbb R^{d-1}$. A comment on rotational invariance of the definition of special Lipschitz domain is probably necessary. In most sources, e.g. \cite[p.\,304]{CKS}, the definition of special Lipschitz domain does not include a possible rotation; see, however, \cite[Section 3.3]{Stein}.
Nevertheless, it is clear that rotation does not change essential properties of such domains. The class of special Lipschitz domains is a subclass of \textit{strongly Lipschitz domains}, i.e. open connected proper subsets  of $\R$ whose boundaries are covered by a finite union of rotated graphs of Lipschitz maps at 
most one of them being unbounded. It is also worth mentioning that, equipped with Lebesgue measure, $C_+$ is a \textit{space of homogeneous type} in the sense 
of Coifman and Weiss \cite{CW} (with the family of `balls' being truncated cubes, i.e. sets of the form $Q\cap C_+$, where Q is a cube with center in $C_+$).

We now recall the concept of simple roots. The \textsl{system of simple roots} (\textit{simple system} for short), called \textit{fundamental system} 
in \cite{DX}, is the unique subset $\Sigma\subset R_+$ which is a basis of ${\rm lin}\{\a\colon \a\in R_+\}$ and each $\a\in R_+$ is a linear combination of vectors from this basis with nonnegative coefficients, see \cite{Hu}. Consequently, $|\Sigma|={\rm rank}\,(R)$ and $C_+=\{x\in\R\colon \langle x,\a\rangle>0,\,\, \a\in\Sigma\}$, and the closure $\overline{C_+}$ has exactly ${\rm rank}\,(R)$ \textit{facets}, $\overline{C_+}\cap\langle\a\rangle^\bot$, $\a\in\Sigma$ 
($\mathcal W_\a:=\langle\a\rangle^\bot$ will be called a \textit{wall} of $C_+$). These facets are closed $(d-1)$-dimensional infinite cones (not necessarily congruent) in the hyperplanes $\langle\a\rangle^\bot$ (for $d=1$ the facet is understood as the single point, the origin). If $|\Sigma|=1$, i.e. $\Sigma=\{\a\}$, then the single facet coincides with $\langle\a\rangle^\bot$, otherwise, for $|\Sigma|\ge 2$, these facets are proper cones. If $|\Sigma|=d$, i.e.  ${\rm rank}(R)=d$, then $C_+$ is a \textit{simplicial cone}. An overview of the variety of possible Weyl chambers in low dimensions can be found in \cite[Appendix]{S2}. 

${\rm Hom}(W,\,\widehat{\mathbb Z}_2)$ will stand for the group of homomorphisms from $W$ to $\widehat{\mathbb Z}_2$, where $\widehat{\mathbb Z}_2=\{1,-1\}$ 
with multiplication. The homomorphisms $\eta\equiv1$ and $W\ni g\mapsto {\rm det}\, g$ will be denoted by ${\rm triv}$ and ${\rm sgn}$, respectively. 
Heuristically, $\Hom$ serves for assigning signs to the walls of $C_+$ by setting ${\rm sign}_\eta(\mathcal W_\a):= \eta(\s_\a)$, $\a\in\Sigma$.

Perhaps the simplest example of a root system in $\R$ is furnished by an orthogonal system of vectors $E=\{v_1,\ldots,v_m\}$, $1\le m\le d$. Then $R=E\cup(-E)$ is indeed a root system with $E$ as a set of positive roots (it suffices to take $x_0=v_1+\ldots+v_m$; in fact, $E$ is a simple system). It is worth noting that
unless $m=1$, such a system is always \textit{reducible}, i.e. splits into disjoint union of nonempty mutually orthogonal sets (each such set is itself a root
system). Reducible systems are equivalently called \textit{decomposable}.

It is clear that from the geometrical point of view root systems in $\R$ leading to geometrically congruent positive Weyl chambers should be identified. It is equivalently clear that for the geometry resulting from a given root system $R$, responsible is solely the configuration of the corresponding hyperplanes $\langle \a\rangle^\bot$, $\a\in R$. To avoid repetition in labeling the hyperplanes in what follows we  use positive roots for this labeling (a choice
of a system of positive roots is immaterial for labeling). Thus, two root systems $R_1$ and $R_2$ in $\R$ are \textsl{isomorphic} provided there exists an
orthogonal mapping in $\R$ that permutes the corresponding families of hyperplanes $\{\langle \a\rangle^\bot\}_{\a\in R_{1,+}}$ and $\{\langle \a\rangle^\bot\}_{\a\in R_{2,+}}$. Then $C_{1,+}$ and $C_{2,+}$ are congruent and the groups $W(R_1)$ and $W(R_2)$ are isomorphic.

Let $R$ be an orthogonal root system in $\R$. Up to a rotation, $R$ is isomorphic to the system $\{\pm e_{j_1},\ldots,\pm e_{j_k}\}$, where 
$1\le j_1<j_2<\ldots<j_k=d$ and $1\le k\le d$. Next, up to a permutation of the axes the latter system is isomorphic to the root system 
$R_k:=\{\pm e_{d-k+1},\ldots,\pm e_d\}$. With choice $R_{k,+}:=\{e_{d-k+1},\ldots,e_d\}$ we denote the corresponding positive Weyl chamber by $\R_{+,k}$. 
Thus $\R_{+,k}=\mathbb{R}^{d-k}\times(0,\infty)^k$ and for $k=1$ we write $\R_{+}$ rather than $\R_{+,1}$ to denote the upper half-space in $\R$; for 
$d=k=1$ we simply write $\RR_+$ to denote the half-line $(0,\infty)$.

Concluding, without losing the generality, consideration of an orthogonal root system in $\R$ may be reduced to $R_k$ for some $1\le k\le d$ and to $\R_{+,k}$ 
as the corresponding positive Weyl chamber. Moreover,$\,W(R_k)\simeq\widehat{\mathbb Z}_2^k$ and the action of any 
$\ve=(\ve_j)_{j=1}^k\in \widehat{\mathbb Z}_2^k$ on $\R$ is through
$$
x\to \ve x=(x_1,\ldots,x_{d-k},\ve_1x_{d-k+1},\ldots, \ve_k x_d).
$$
Consequently, we identify ${\rm Hom} (\widehat{\mathbb Z}_2^k, \widehat{\mathbb Z}_2)$ with $\mathbb Z_2^k$, where this time, $\mathbb Z_2=\{0,1\}$ with 
addition modulo 2. In this identification a homomorphism $\eta \in \mathbb Z_2^k$ of the reflection group represented by $\widehat{\mathbb Z}_2^k$ into 
$\widehat{\mathbb Z}_2$ acts  through $\ve \to \ve^\eta:=\prod_{j=1}^k \ve_j^{\eta_j}$ for  $\ve=(\ve_j)_{j=1}^k$. The trivial homomorphism represented 
by $(0,\ldots,0)$, which in the general case is denoted by ${\rm triv}$, in this particular case of orthogonal root systems will be denoted by ${\bf 0}$. 
On the other hand, the homomorphism represented by $(1,\ldots,1)$, which in the general case is denoted by ${\rm sgn}$, here will be denoted by $\textbf{1}$.

For a comprehensive treatment of the general theory of finite reflection groups we refer the reader to Humphreys \cite{Hu}, Kane \cite{Ka}, 
and Dunkl and Xu \cite[Chapter 4]{DX}

\section{Function and distribution spaces on $\overline{C_+}$} \label{sec:semi}

The essential aim of  this section is to propose a general procedure of defining $\eta$-function/distribution spaces on $\overline{C_+}$. This is done 
in the following two subsections. Although the procedure is then applied only to Hardy and BMO spaces, we find it reasonable to present the procedure in 
broader generality due to possible subsequent investigations in other function spaces frameworks. Throughout, if not otherwise stated, $R$, $W$, $R_+$, $C_+$, and $\Hom$ are fixed. 

Since $W$ acts simply transitively on the set of Weyl chambers, the following notion introduced in \cite{S1}, makes sense. 
Namely, a function $F$ on $\R$, identified up to a set of Lebesgue measure zero, is called $\eta$-\textit{symmetric} provided 
$$
F(gx)=\eta(g)F(x), \qquad  g\in W,\,\,  x\in\R. 
$$
Equivalently, $F$ is $\eta$-symmetric if and only if $F=\A F$, where $\A$ stands for the $\eta$-\textit{averaging operator}
$$
\A F(y)=\frac1{|W|}\sum_{g\in W}\eta(g)F(gy), \qquad y\in\R. 
$$
The $\eta$-\textit{extension operator}  
\begin{equation}\label{ext}
\EEeta f(gx)=\eta(g)f(x), \qquad x\in C_+,\quad g\in W, 
\end{equation}
extends functions on $C_+$, possibly also identified up to a set of Lebesgue measure zero on $C_+$, to $\eta$-symmetric functions on 
$\R\setminus\bigcup_{\a\in R_+}\langle \a\rangle^\bot$, or on $\R$ if we identify functions on $\R$ up to a set of Lebesgue measure zero. Notice the useful equality
\begin{align}\label{us}
\int_{\R}\EEeta f\cdot F=|W|\int_{C_+}f\cdot (\A F)|_{C_+},
\end{align}
for suitable functions $f$ and $F$.

The concepts of $\eta$-averaging and $\eta$-extension operators were introduced  in \cite{S1} and concerned functions identified up to a set of Lebesgue 
measure zero. When we wish to extend continuous functions on $\overline{C_+}$, i.e. if we want to replace  $C_+$ by $\overline{C_+}$ in \eqref{ext}, 
then we use a well known fact that the orbit of any $x\in \overline{C_+}\setminus C_+$ under the action of $W$ meets $\overline{C_+}\setminus C_+$ only at $x$. So, by \eqref{ext}, $\EEeta f$ is well defined on $\R$ for $f$ given on $ \overline{C_+}$. See Section \ref{sub:Schwartz}, where this remark is applied.

It is clear that $\A$ maps the spaces $\mathcal{S}(\R)$ and $\mathcal{D}(\R)$, respectively,  continuously into itself. 
Hence we can extend the action of $\A$ onto $\mathcal{S}'(\R)$ or $\mathcal{D}'(\R)$ by 
$$
\langle \A T,\v\rangle=\langle T,\A\v\rangle,
$$
where $T\in \mathcal{S}'(\R)$ and $\v\in \mathcal{S}(\R)$, and analogously in the other case. Obviously, this action is also continuous.  
$T\in \mathcal{S}'(\R)$ or $T\in \mathcal{D}'(\R)$ is called $\eta$-\textit{symmetric} provided  $T=\A T$.

We are now ready to propose the following procedure. Let $\mathbb X$ be a Banach, quasi-Banach or topological vector space of
functions, distributions or tempered distributions on $\R$. By $\mathbb X_\eta$ we shall denote the subspace of $\mathbb X$
consisting of all $\eta$-symmetric functions/distributions/tempered distributions in $\mathbb X$ with the norm/quasi-norm/family
of semi-norms, or just the topology, inherited from that in $\mathbb X$. It frequently happens that $\mathbb X_\eta$ is closed
in $\mathbb X$; this can be always verified by direct checking.

\subsection{General procedure: function spaces} \label{sub:pro}

If $\mathbb X$ is a vector space of functions on $\R$, then $\mathbb X_\eta$ can be also seen as the space of restrictions to $\overline{C_+}$
of $\eta$-symmetric functions from $\mathbb X$. Equivalently, a function $f$ living on $\overline{C_+}$ belongs to $\mathbb X_\eta$
if and only if $\EEeta f$ belongs to $\mathbb X$.  Summarizing, we identify, as sets, $\mathbb X_\eta$ which is a subspace of $\mathbb X$ (of functions on
$\R$), with the following space of functions on $\overline{C_+}$,
$$
\{f\colon f\,\,{\rm is}\,\,{\rm a}\,\,{\rm function}\,\,{\rm on}\,\,\overline{C_+} \,\,{\rm and}\,\,\EEeta f\in\mathbb X\}
$$
and then we set $\|f\|_{\mathbb X_\eta}:=\|\EEeta f\|_{\mathbb X}$.
If functions in $\mathbb X$ are identified up to sets of Lebesgue measure zero,  we write $C_+$ rather than $\overline{C_+}$.

Of course in some cases the procedure does not bring in anything new in the following sense. Namely, assume that there is a well established theory
of a `category' of spaces for any open subset of $\R$ (e.g. category of $L^p$ spaces). It may happen that for any $f$ on $C_+$ and any $\eta$, the 
$\eta$-extension $\EEeta f$, as the function on $\R$,  belongs to this category space. For instance, this happens for $\mathbb X=L^p(\R)$, $0<p\le \infty$; 
we have then $\mathbb X_\eta=L^p(C_+)$ with norms that differ by the multiplicative constant $|W|$, $\|\cdot \|_{L^p(\R)_\eta}=|W|\|\cdot \|_{L^p(C_+)}$. 
(On the scale of Sobolev spaces, for $\mathbb X=W^{1,p}(\R)$ or $\mathbb X=W^{1,p}_0(\R)$, $1\le p< \infty$, it holds: for $\eta={\rm triv}$, 
$W^{1,p}(\R)_{\rm triv}=W^{1,p}(C_+)$, whereas for $\eta={\rm sgn}$, $W^{1,p}_0(\R)_{{\rm sgn}}=W^{1,p}_0(C_+)$, with equivalence of norms. 
Both results are nontrivial, see \cite{S1}.)

One more comment is necessary in case when $\mathbb X$ is a Banach space of functions on $\R$ identified modulo constants (e.g. BMO spaces). Since
$\EEeta \mathbbm{1}_{C_+}$ is not a constant function unless $\eta=\rm triv$, for general $\eta$ we remain with the definition of $\mathbb X_\eta$
as the subspace of $\mathbb X$. To be precise,  $\mathbb X_\eta$ consists of these abstract classes $[f]\in \mathbb X$ such that there exists a
representative $f_0\in [f]$ which is an $\eta$-symmetric function. The norm in $\mathbb X_\eta$ is inherited from that in $\mathbb X$.

It is desirable to provide intrinsic characterizations of the spaces $\mathbb X_\eta$, identified as spaces of functions (or functions modulo constants)
or distributions on $\overline{C_+}$. This means to impose necessary and sufficient conditions on $f$, a function or a distribution on  $\overline{C_+}$,
in terms of some objects, equivalent to the condition $ \EEeta f\in\mathbb X$. We do this in several occurrences, cf. Theorems \ref{thm:1} and \ref{thm:1(loc)}.

\subsection{General procedure: distribution spaces} \label{sub:Schwartz} 
According to the observations made in the beginning of Section \ref{sub:pro} we first identify $\mathcal{S}_\eta(\R)$, 
the (closed) subspace of $\mathcal{S}(\R)$ of $\eta$-symmetric Schwartz functions on $\R$, with
$$
\mathcal{S}_\eta(\overline{C_+})=\{f:\overline{C_+}\to \C\colon \EEeta f\in\mathcal{S}(\R)\}.
$$
This space can be seen as the space of $\eta$-\textit{Schwartz functions} on $\overline{C_+}$. The topology in $\mathcal{S}_\eta(\overline{C_+})$, 
inherited from $\mathcal{S}(\R)$, has an intrinsic description as the topology generated by the family of seminorms 
$$
p_{\a,\b}(f)=\sup_{x\in C_+}|x^\a\partial_{\b} f(x)|, \qquad \a,\b\in\N^d.
$$
Indeed, $p_{\a,\b}(f)\le p_{\a,\b}(\EEeta f)$, where the latter is understood with the supremum taken over $\R$. In the opposite direction,
it is easily seen that $p_{\a,\b}(\EEeta f)$ is dominated, up to a multiplicative constant, by a finite sum of $p_{\a_i,\b_i}(f)$.
With this topology $\mathcal{S}_\eta(\overline{C_+})$  becomes a Fr\'echet space. 

Obviously, $\mathcal{S}_\eta(\overline{C_+})$ is a subspace of $\mathcal C^\infty(\overline{C_+})$, the vector space of $\mathcal C^\infty$ functions 
on $\Omega$ such that for any multi-index $\a$, $\partial^\a f$ extends (uniquely) to a continuous function on $\overline{\Omega}$ (this is equivalent 
to the statement that for any $\a$, $\partial^\a f$ is uniformly continuous on bounded subsets of $\Omega$). Moreover, the boundary behavior of functions 
from $\mathcal{S}_\eta(\overline{C_+})$ depends on $\eta$. Namely, with $\mathcal W_\a=\langle\a\rangle^\bot$, $\a\in\Sigma$, as the walls of $C_+$ with 
attached signs, i.e.  ${\rm sign}_\eta(\mathcal W_\a)=\eta(\s_\a)$, it is easily seen that for $f\in\mathcal{S}_\eta(\overline{C_+})$ we have the following: 
if ${\rm sign}_\eta(\mathcal W_\a)=-1$, then $f$ vanishes on the facet 
$\overline{C_+}\cap\langle\a\rangle^\bot$, whereas for ${\rm sign}_\eta(\mathcal W_\a)=1$, $\frac{\partial f}{\partial\vec{n}}$ vanishes on the $(d-1)$-dimensional interior of $\overline{C_+}\cap\langle\a\rangle^\bot$. The conclusion is that the spaces $\mathcal{S}_\eta(\overline{C_+})$ 
labeled by $\Hom $ are pairwise different.

Next we define $\mathcal{S}_\eta'(\overline{C_+})$ as the space of continuous linear functionals on $\mathcal{S}_\eta(\overline{C_+})$ with topology induced 
by the family of mappings $T\mapsto \langle T,\v\rangle$, $\v\in \mathcal{S}_\eta(\overline{C_+})$, and call it the space of $\eta$-(\textit{tempered}) 
\textit{distributions on} $\overline{C_+}$. With $\mathcal{S}_\eta(\overline{C_+})$ as a subspace of $\mathcal{S}(\R)$, it is obvious that $\mathcal{S}'(\R)\hookrightarrow \mathcal{S}_\eta'(\overline{C_+})$ (a continuous embedding).

Following the procedure described in the lines preceding Subsection \ref{sub:pro}, if $\mathbb X$ is a subspace of the space of tempered distributions on $\R$ 
(the case of distributions can be treated analogously), i.e. $\mathbb X\subset \mathcal{S}'(\R)$, then $\mathbb X_\eta$ is defined to consist of 
$\eta$-symmetric members of $\mathbb X$. But $\mathbb X_\eta$ can be also identified with a subspace of $\mathcal{S}_\eta'(\overline{C_+})$, namely with
$$
\{T\in \mathcal{S}_\eta'(\overline{C_+})\colon \EEeta T\in\mathbb X\},
$$
where the extension operator on $\eta$-tempered distributions is $\langle\EEeta T, \varphi\rangle:=\langle T, \mathcal{A}_{\eta}\varphi\rangle$ 
for $T\in \mathcal{S}_\eta'(\overline{C_+})$ and $\varphi\in \mathcal{S}(\R)$ (and $\mathcal{A}_{\eta}\varphi$ is identified with its restriction 
to $\overline{C_+}$). Moreover, with this identification, the topology in $\mathbb X_\eta$ inherited from $\mathbb X$ agrees with that in 
$\{T\in \mathcal{S}_\eta'(\overline{C_+})\colon \EEeta T\in\mathbb X\}$ 
inherited from $\mathcal{S}_\eta'(\overline{C_+})$. In what follows we shall use this parallel description of distributional $\mathbb X_\eta$.  

\subsection{$\eta$-Hardy  spaces  on $\overline{C_+}$} \label{sub:Hp}
We commence with presenting a brief account of the development of the theory of Hardy spaces on subdomains of $\R$. 

Jonsson,  Sj\"ogren and  Wallin \cite{JSW} provided a construction of (global) Hardy spaces $H^p$ and (local) Hardy spaces $h^p$ for suitable closed subsets 
of $\R$. Additionally, the duals of these spaces were described as the homogeneous and inhomogeneous Lipschitz spaces, respectively. Miyachi \cite{M} introduced 
$H^p$ spaces on arbitrary open subsets of $\R$ together with identification of their duals. The initial definition of these spaces was given in terms of 
maximal functions. Then an atomic decomposition was established. Chang, Krantz and  Stein \cite{CKS} considered several versions of $H^p$ and $h^p$ spaces 
over open bounded subsets with smooths boundary (mostly Lipschitz). Results of \cite{CKS} were partially significantly improved in \cite{CDS} where two 
versions of $h^p$ spaces over open bounded subsets with smooth boundary (mostly $\mathcal C^\infty$) were discussed. In one of these two versions, as the 
space of test functions, the subspace of $\mathcal C^\infty(\overline{\Omega})$ with functions vanishing on the boundary was chosen. Therefore, in some sense, this part of the theory concerned closed domains. Chang \cite{C} established results on the dual spaces of local Hardy spaces discussed in \cite{CDS}. See also the paper by Auscher and Russ \cite{AR} (or rather the first (v1) arXiv version of  \cite{AR}, in the sequel for brevity denoted \cite{AR}-arXiv), where a theory of global and local Hardy spaces (as well as BMO spaces) on strongly Lipschitz domains of $\R$ is presented. We refer the reader to Stein \cite[Chapter III]{St} 
and Semmes \cite{Sem}, for a comprehensive treatment of the  theory of Hardy spaces.

Following the  procedure described above and comments made in Subsections \ref{sub:pro} and \ref{sub:Schwartz} we consider the spaces $H^p_\eta(\overline{C_+})$ and $h^p_\eta(\overline{C_+})$, $0<p\le1$. Here the `initial spaces' $H^p(\R)$ and $h^p(\R)$ are equipped with (quasi) norms $\|\cdot\|_{H^p(\R)}$ and 
$\|\cdot\|_{h^p(\R)}$, given by the grand maximal function and its truncated version,  respectively, as described in \cite[Chapter III]{St}. (This requires choosing an appropriate finite collection of seminorms on $\mathcal S(\R)$ but the resulting space is independent of this choice with equivalence of norms.) 
It is routine to check that if $\mathbb X$ is one of the latter spaces, then $\mathbb X_\eta$ is a closed subspace of $\mathbb X$. In this way, for $p=1$, 
$H^1_\eta(C_+)$ and $h^1_\eta(C_+)$ are Banach spaces of functions on $C_+$, while for $0<p<1$, $H^p_\eta(\overline{C_+})$ and $h^p_\eta(\overline{C_+})$, 
are quasi-Banach spaces of $\eta$-(tempered) distributions on $\overline{C_+}$. 

It is worth pointing out that in particular cases of a root system $R$ and a homomorphism $\eta$ these spaces appeared in the literature. 
For example, for the half-space  $\R_+$ as a Weyl chamber corresponding to the root system $R_1=\{-e_d,e_d\}$ and $\eta=\rm triv$
or $\eta={\rm sgn}$, for $p=1$ the corresponding spaces were denoted  in \cite{CKS}, as $H^1_e(\R_+)$ and $H^1_o(\R_+)$, respectively. 

Let us mention that in the case $p=1$ the following two concepts of $H^1$ spaces, as well as $h^1$ spaces, on an arbitrary open subset $\Omega$ 
of $\R$ were fundamental; namely, the concepts of \textit{restriction space} (to $\Omega$) and \textit{extension space} (by zero outside $\Omega$). Thus, 
\begin{itemize}
	\item $H^1_r(\Omega)$ is understood as the space of  restrictions to $\Omega$ of functions from $H^1(\R)$ with norm 
	$\|f\|_{H^1_r(\Omega)}:=\inf \|F\|_{H^1(\R)}$, where  $f=F|_{\Omega}$;
		\item $H^1_z(\Omega)$ is understood as the space of functions on $\Omega$ such that their extensions by zero outside $\Omega$ are in 
		$H^1(\R)$, with norm $\|f\|_{H^1_z(\Omega)}:=\|F\|_{H^1(\R)}$, where $F$ is the aforementioned extension: $F=f$ on $\Omega$ and $F=0$ on $\Omega^c$.
\end{itemize}
Analogous definitions of $h^1_r(\Omega)$ and $h^1_z(\Omega)$ follow. Obviously $H^1_z(\Omega)\hookrightarrow_1 H^1_r(\Omega)$ and $h^1_z(\Omega)\hookrightarrow_1 h^1_r(\Omega)$. Moreover, both inclusions are strict. For the first one, every function $f\in H^1_z(\Omega)$ satisfies $\int_\Omega f=0$, whereas this may not happen for $f\in H^1_r(\Omega)$; for the second one, see \cite[Proposition 6.4]{CDS}. 

When it comes to a comparison of local and global Hardy spaces then, clearly, $H^p(\R)\hookrightarrow_1h^p(\R)$. Consequently, we have 
$H^1_z(C_+)\hookrightarrow_1h^1_z(C_+)$, $H^1_r(C_+)\hookrightarrow_1 h^1_r(C_+)$, and $H^p_\eta(C_+)\hookrightarrow_1 h^p_\eta(C_+)$. 

Summing up, the theories of Hardy spaces on open subsets of $\R$ are well established. We mention that the differential operator staying behind 
these theories is the Laplacian; more precisely, including boundary conditions, the Neumann or the Dirichlet Laplacian. 

An important line of generalizations of the classical theory of Hardy spaces (and bounded mean oscillation spaces) on subdomains of $\R$ is devoted 
to some classes of nonnegative self-adjoint operators. For instance, Auscher and Russ \cite{AR} investigated Hardy spaces for divergence operators 
on special Lipschitz domains of $\R$; Duong and Yan \cite{DY} considered Hardy and BMO spaces associated with operators with heat kernel bounds. 
See the references in  \cite{AR} and \cite{DY} for further results.

Straightforward adaptations of these theories to sets which are Weyl chambers do not include intimately associated symmetries. One of the aim of 
this paper is to bring into light the important role of these symmetries.

Let $\{p_t\}_{t>0}$ be the Gauss-Weierstrass kernel on $\R$. We consider the maximal operators
$$
\mathcal M T(x)=\sup_{t>0}|\langle T,p_t(x-\cdot)\rangle|,\qquad T\in\mathcal S'(\R), \quad x\in \R,
$$
$$
mT(x)=\sup_{0<t<1}|\langle T,p_t(x-\cdot)\rangle|,\qquad T\in\mathcal S'(\R), \quad x\in \R.
$$
In particular, for $T=F\in L^1(\R)$, $\mathcal M F(x)=\sup_{t>0}|p_t*F(x)|$ is the \textit{vertical maximal function} and similarly for $mF$. 

Analogously, let $\{P_t\}_{t>0}$ be the Poisson kernel on $\R$, $P_t(x)=c_nt/(t^2+|x|^2)^{(n+1)/2}$, and let $\mathcal M^* F$ and $m^*F$ 
be the corresponding maximal operators with replacement of $p_t$ by $P_t$, for $F\in L^1(\R)$. 

It is a basic fact in the theory of Hardy spaces on $\R$ that  for $0<p\le 1$,  $\mathcal M$ and $m$ give rise to the norms/quasi-norms 
$$
\|T\|_{H^p,{\rm max}}:=\|\mathcal M T\|_p\,\,\,\quad {\rm and}\,\,\, \quad \|T\|_{h^p,{\rm max}}:= \|m T\|_p,
$$
on $H^p(\R)$ and $h^p(\R)$, which are equivalent with the norms/quasi-norms $\|\cdot\|_{H^p}$ and $\|\cdot\|_{h^p}$, respectively. 

In \cite{S1} (see also \cite{S2} and \cite{MS}) one of the authors introduced a family $\{-\Delta^+_\eta\}$, $\Hom$, of (pairwise different) nonnegative 
self-adjoint extensions of the (minus) Laplacian $-\Delta_{C_+}$ considered with $C^\infty(C_+)$ as the initial domain. The imposed boundary conditions are 
given by setting the signs $\eta(\s_\a)$ on the facets $\overline{C}_+\cap\mathcal W_\a$  of $C_+$, $\a\in\Sigma$. 

It was proved in \cite[Corollary 1.2]{S1} (see also \cite[Proposition 2.1]{S2}) that the $\eta$-heat kernel on $C_+$, i.e. the integral kernels of 
the semigroup $\{e^{-t(-\Delta^+_\eta)}\}_{t>0}$ are
\begin{equation}\label{first}
p_t^{\eta,\,C_+}(x,y)=\sum_{g\in W}\eta(g)p_t(gx-y),\qquad x,y\in C_+,\quad t>0.
\end{equation}
Similarly for  the semigroup $\{e^{-t(-\Delta^+_\eta)^{1/2}}\}_{t>0}$, the integral kernels are 
\begin{equation}\label{second}
P_t^{\eta,\,C_+}(x,y)=\sum_{g\in W}\eta(g)P_t(gx-y),\qquad x,y\in C_+,\quad t>0
\end{equation}
(see \cite[Theorem  1.1]{S1}). Obviously, \eqref{first} and \eqref{second} allow to extend $p_t^{\eta,\,C_+}$ and $P_t^{\eta,\,C_+}$ to $\R\times\R$ 
and frequently we shall consider $p_t^{\eta,\,C_+}$ and $P_t^{\eta,\,C_+}$ in this way. Observe that $p_t^{\eta,\,C_+}(x,y)$ is symmetric in $x$ and $y$, 
and for a given $x\in\R$, $p_t^{\eta,\,C_+}(x,\cdot)=|W|\A\big(p_t(x-\cdot)\big)$ and analogously for $P_t$.

We define the corresponding maximal operators (note that for every $x\in\R$, $p_t^{\eta,\,C_+}(x,\cdot)\in \mathcal S_\eta(\R))$,
\begin{equation} \label{M}
\mathcal M_{\eta} T(x)=\sup_{t>0}\Big|\langle T,p_t^{\eta,\,C_+}(x,\cdot)\rangle\Big|,\qquad T\in\mathcal S_\eta'(\R), \qquad x\in C_+,
\end{equation}
and
\begin{equation} \label{m}
m_{\eta} T(x)=\sup_{0<t<1}\Big|\langle T,p_t^{\eta,\,C_+}(x,\cdot)\rangle\Big|, \qquad T\in\mathcal S_\eta'(\R), \quad x\in C_+.
\end{equation}

It is now convenient to separate the cases $p=1$ and $0<p<1$. For $p=1$ let
$$
H^1_{-\Delta^+_\eta}(C_+)=\{f\in L^1(C_+)\colon \|f\|_{H^1_\eta(C_+),\,{\rm max}}:=\sup_{t>0}\big|e^{-t(-\Delta^+_\eta)}f\big|\in L^1(C_+)\}
$$
and
$$
H^1_{(-\Delta^+_\eta)^{1/2}}(C_+)=\{f\in L^1(C_+)\colon \|f\|_{H^1_\eta(C_+),\,{\rm max}^*}:=\sup_{t>0}\big|e^{-t(-\Delta^+_\eta)^{1/2}}f\big|\in L^1(C_+)\},
$$
with analogous definitions of $h^1_{-\Delta^+_\eta}(C_+)$ and $h^1_{(-\Delta^+_\eta)^{1/2}}(C_+)$, where taking the supremum reduces to $0<t<1$. 
\begin{proposition} \label{pro:0}
The spaces $H^1_{-\Delta^+_\eta}(C_+)$ and $H^1_{(-\Delta^+_\eta)^{1/2}}(C_+)$ coincide with $H^1_\eta(C_+)$ with equivalence of norms. 
Analogous statement holds for the local spaces $h^1_{-\Delta^+_\eta}(C_+)$ and $h^1_{(-\Delta^+_\eta)^{1/2}}(C_+)$.   
\end{proposition}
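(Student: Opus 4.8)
The plan is to reduce the statement to the classical maximal-function characterizations of $H^1(\R)$ and $h^1(\R)$ recalled above, by identifying the semigroup maximal operators attached to $-\Delta^+_\eta$ with the Euclidean heat (Gauss--Weierstrass) and Poisson maximal operators acting on $\eta$-extensions. The basic observation is that, for $f\in L^1(C_+)$ (so that $\EEeta f\in L^1(\R)$),
\begin{equation*}
e^{-t(-\Delta^+_\eta)}f(x)=(p_t*\EEeta f)(x),\qquad e^{-t(-\Delta^+_\eta)^{1/2}}f(x)=(P_t*\EEeta f)(x),\qquad x\in C_+,\ t>0.
\end{equation*}
Indeed, by \eqref{first} the left-hand side of the first identity equals $\int_{C_+}p_t^{\eta,\,C_+}(x,y)f(y)\,dy$; since $p_t^{\eta,\,C_+}(x,\cdot)=|W|\,\A\big(p_t(x-\cdot)\big)$, the pairing formula \eqref{us} applied with $F=p_t(x-\cdot)$ turns this into $\int_\R \EEeta f(y)\,p_t(x-y)\,dy=(p_t*\EEeta f)(x)$. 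The Poisson case is identical, using \eqref{second} and $P_t^{\eta,\,C_+}(x,\cdot)=|W|\,\A\big(P_t(x-\cdot)\big)$.

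Next I would exploit symmetry. Since $p_t$ and $P_t$ are radial and $\EEeta f$ is $\eta$-symmetric, the change of variables $y\mapsto gy$ ($g\in W$, orthogonal and measure preserving) shows that $p_t*\EEeta f$ and $P_t*\EEeta f$ are again $\eta$-symmetric on $\R$; hence the Euclidean maximal functions $\mathcal M(\EEeta f)$, $m(\EEeta f)$, $\mathcal M^*(\EEeta f)$, $m^*(\EEeta f)$ are $W$-invariant, and by the identity above their restrictions to $C_+$ coincide with $\sup_{t>0}|e^{-t(-\Delta^+_\eta)}f|$, $\sup_{0<t<1}|e^{-t(-\Delta^+_\eta)}f|$, $\sup_{t>0}|e^{-t(-\Delta^+_\eta)^{1/2}}f|$, $\sup_{0<t<1}|e^{-t(-\Delta^+_\eta)^{1/2}}f|$, respectively. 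Because $\R$ is, up to a null set, the disjoint union of the $|W|$ chambers and these maximal functions are $W$-invariant, one gets for instance $\|\mathcal M(\EEeta f)\|_{L^1(\R)}=|W|\,\big\|\sup_{t>0}|e^{-t(-\Delta^+_\eta)}f|\,\big\|_{L^1(C_+)}$, and likewise for the other three.

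Finally I would invoke the equivalences $\|G\|_{H^1(\R)}\simeq\|\mathcal M G\|_{L^1(\R)}\simeq\|\mathcal M^* G\|_{L^1(\R)}$ and $\|G\|_{h^1(\R)}\simeq\|m G\|_{L^1(\R)}\simeq\|m^* G\|_{L^1(\R)}$ (the heat versions being recalled above, the Poisson ones being the classical Fefferman--Stein and Goldberg characterizations) with $G=\EEeta f$, together with $\|f\|_{H^1_\eta(C_+)}=\|\EEeta f\|_{H^1(\R)}$ and $\|f\|_{h^1_\eta(C_+)}=\|\EEeta f\|_{h^1(\R)}$, to conclude that $\|f\|_{H^1_\eta(C_+)}\simeq|W|\,\|f\|_{H^1_\eta(C_+),\,{\rm max}}\simeq|W|\,\|f\|_{H^1_\eta(C_+),\,{\rm max}^*}$ and similarly in the local case. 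The underlying sets match as well: $H^1_\eta(C_+)\subset L^1(C_+)$ since $H^1(\R)\subset L^1(\R)$ and $L^1(\R)_\eta=L^1(C_+)$, while conversely an $f\in L^1(C_+)$ with finite maximal norm has $\EEeta f\in H^1(\R)$, which is $\eta$-symmetric, so $f\in H^1_\eta(C_+)$; the same argument handles $h^1$.

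The only delicate point --- and it is not really an obstacle, being precisely the content of the cited results of \cite{S1,S2} --- is that the semigroups $\{e^{-t(-\Delta^+_\eta)}\}_{t>0}$ and $\{e^{-t(-\Delta^+_\eta)^{1/2}}\}_{t>0}$, a priori defined only on $L^2(C_+)$, act on $L^1(C_+)$ by integration against the kernels \eqref{first} and \eqref{second}; once that is granted, the displayed identity is a one-line consequence of \eqref{us} and everything else is bookkeeping with the $W$-invariance. One should also be careful to use the heat (Gauss--Weierstrass) maximal characterization of $H^1(\R)$ and $h^1(\R)$, and not only the Poisson one, but this is among the standard facts recalled in the text.
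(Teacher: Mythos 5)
Your proof is correct and follows essentially the same route as the paper: establish the identity $e^{-t(-\Delta^+_\eta)}f=(p_t*\EEeta f)|_{C_+}$ (and its Poisson analogue) from \eqref{first}/\eqref{second} and \eqref{us}, observe that the resulting maximal functions on $\R$ are $W$-invariant so that $L^1(\R)$- and $L^1(C_+)$-norms differ exactly by the factor $|W|$, and then invoke the heat and Poisson maximal characterizations of $H^1(\R)$ and $h^1(\R)$ together with $\|f\|_{H^1_\eta(C_+)}=\|\EEeta f\|_{H^1(\R)}$. The paper packages the same computation through the maximal operator $\mathcal M_\eta$ and the identity $\mathcal M_\eta f=\mathcal M(\EEeta f)$ on $\R$, but the mathematical content is the same; your added remark on radiality of $p_t$, $P_t$ ensuring $\eta$-symmetry of $p_t*\EEeta f$ is a minor elaboration that the paper leaves implicit.
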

\begin{proof}
For $p=1$ and $f\in L^1(C_+)$, \eqref{M} takes the form 
$$
\mathcal M_{\eta} f(x)=\sup_{t>0}\Big|\int_{C_+}p_t^{\eta,\,C_+}(x,y)f(y)dy\Big|=\sup_{t>0}\big|e^{-t(-\Delta^+_\eta)}f(x)\big|, \qquad x\in C_+.
$$ 
Recall that 
$$
H^1_\eta(C_+)=\{f\in L^1(C_+)\colon \EEeta f\in H^1(\R)\}
$$
with norm $\|f\|_{H^1_\eta(C_+)}=\|\EEeta f\|_{H^1(\R)}$ and similarly for $h^1_\eta(C_+)$. 

It is easily seen (cf. \eqref{us}) that 
\begin{equation}\label{ble}
\int_{C_+}p_t^{\eta,\,C_+}(x,y)f(y)dy=\int_{\R}p_t(x-y)\EEeta f(y)dy, \qquad x\in \R,
\end{equation}
and hence
$$
\mathcal M_{\eta} f(x)=\mathcal M(\EEeta f)(x), \qquad x\in \R.
$$
Since $\mathcal E^\eta f$ is $\eta$-symmetric on $\R$, it follows that
$$
\|\mathcal M_{\eta} f\|_{L^1(C_+)}=\frac1{|W|}\|\mathcal  M(\EEeta f)\|_{L^1(\R)},
$$
and hence
$$
H^1_\eta(C_+)=\{f\in L^1(C_+)\colon \mathcal M_{\eta} f\in L^1(C_+)\}
$$
with norm $\|f\|_{H^1_\eta(C_+),\,{\rm max}}=\|\mathcal M_{\eta} f\|_{L^1(C_+)}$ equivalent to $\|\cdot\|_{H^1_\eta(C_+)}$. Reasoning with replacement of  
$p_t$ and $\|\cdot\|_{H^1_\eta(C_+),\,{\rm max}}$ by ${P_t}$ and  $\|\cdot\|_{H^1_\eta(C_+),\,{\rm max}^*}$, and using the maximal operator 
$\mathcal M_{\eta}^*$, is completely analogous.

Similarly,  \eqref{m} takes the form
$$
m_{\eta} f(x)=\sup_{0<t<1}\Big|\int_{C_+}p_t^{\eta,\,C_+}(x,y)f(y)dy\Big|=\sup_{0<t<1}\big|e^{-t(-\Delta^+_\eta)^{1/2}}f(x)\big|, \qquad x\in C_+,
$$
and arguments parallel  to these just used  lead to
$$
h^1_\eta(C_+)=\{f\in L^1(C_+)\colon  m_{\eta} f\in L^1(C_+)\}=\{f\in L^1(C_+)\colon  m_{\eta}^* f\in L^1(C_+)\}
$$
with norms $\|f\|_{h^1_\eta(C_+),\,{\rm max}}=\|m_{\eta} f\|_{L^1(C_+)} $ and $\|f\|_{h^1_\eta(C_+),\,{\rm max}^*}=\|m_{\eta}^* f\|_{L^1(C_+)}$ 
equivalent to $\|\cdot\|_{h^1_\eta(C_+)}$.
\end{proof}

\begin{proposition} \label{prop:Hp}
For $0<p<1$ we have
$$
H^p_\eta(\overline{C}_+)=\{T\in\mathcal S_\eta'(\R)\colon \mathcal M_{\eta} T\in L^p(C_+)\}
$$
with (quasi) norm $\|f\|_{H^p_\eta(\overline{C}_+),\,{\rm max}}:=\|\mathcal M_{\eta} T\|_{L^p(C_+)}$ equivalent to $\|\cdot\|_{H^p_\eta(\overline{C}_+)}$. 
Analogous statements hold for the local spaces $h^p_\eta(\overline{C}_+)$, $0<p<1$. 
\end{proposition}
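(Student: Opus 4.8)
The plan is to run the distributional analogue of the proof of Proposition \ref{pro:0}, using in place of the $L^1$ maximal characterization of $H^1(\R)$ the maximal characterization of $H^p(\R)$ recalled above, namely that $\|\mathcal M T\|_p\simeq\|T\|_{H^p(\R)}$ and $\|mT\|_p\simeq\|T\|_{h^p(\R)}$, where $\mathcal M T(x)=\sup_{t>0}|\langle T,p_t(x-\cdot)\rangle|$ and $mT(x)=\sup_{0<t<1}|\langle T,p_t(x-\cdot)\rangle|$ (see also \cite[Chapter III]{St}). Since for $0<p<1$ the elements of $H^p(\R)$ need not be locally integrable functions, one must work with tempered distributions throughout; this is why the statement is phrased for $T\in\mathcal S_\eta'(\R)$, and for such $T$ the quantity $\mathcal M_\eta T$ is well defined because $p_t^{\eta,\,C_+}(x,\cdot)\in\mathcal S_\eta(\R)$ for every $x\in\R$. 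Recall also that, by the identifications of Subsections \ref{sub:pro}--\ref{sub:Schwartz}, $H^p_\eta(\overline{C}_+)$ is exactly the space of $\eta$-symmetric members of $H^p(\R)$, and $\|T\|_{H^p_\eta(\overline{C}_+)}=\|\EEeta T\|_{H^p(\R)}=\|T\|_{H^p(\R)}$ for $T\in\mathcal S_\eta'(\R)$.

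The computational core is the identity
\begin{equation}\label{eq:keyHp}
\mathcal M_\eta T(x)=|W|\,\mathcal M T(x),\qquad x\in C_+,\qquad T\in\mathcal S_\eta'(\R),
\end{equation}
together with its local counterpart $m_\eta T(x)=|W|\,mT(x)$. To prove \eqref{eq:keyHp}, write $p_t^{\eta,\,C_+}(x,\cdot)=\sum_{g\in W}\eta(g)p_t(gx-\cdot)$, so that $\langle T,p_t^{\eta,\,C_+}(x,\cdot)\rangle=\sum_{g\in W}\eta(g)(p_t*T)(gx)$. Since $p_t$ is radial, hence $W$-invariant, the smooth function $p_t*T$ inherits the $\eta$-symmetry of $T$, i.e. $(p_t*T)(gx)=\eta(g)(p_t*T)(x)$; summing over $g\in W$ and using $\eta(g)^2=1$ gives $\langle T,p_t^{\eta,\,C_+}(x,\cdot)\rangle=|W|(p_t*T)(x)$, and \eqref{eq:keyHp} follows on taking the supremum over $t$. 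The same symmetry shows $\mathcal M T$ is $W$-invariant, so $\|\mathcal M T\|_{L^p(\R)}^p=|W|\,\|\mathcal M T\|_{L^p(C_+)}^p$ (the set $\R\setminus\bigcup_{g\in W}gC_+$ being Lebesgue null), and hence \eqref{eq:keyHp} yields $\|\mathcal M_\eta T\|_{L^p(C_+)}=|W|^{1-1/p}\|\mathcal M T\|_{L^p(\R)}$, a fixed multiple of $\|\mathcal M T\|_{L^p(\R)}$.

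Combining the last identity with the maximal characterization of $H^p(\R)$, one gets that for $T\in\mathcal S_\eta'(\R)$ the conditions $\mathcal M_\eta T\in L^p(C_+)$, $\mathcal M T\in L^p(\R)$, and $T\in H^p(\R)$ are equivalent; and the last of these, since $T$ is already $\eta$-symmetric, is equivalent to $T\in H^p_\eta(\overline{C}_+)$, with $\|\mathcal M_\eta T\|_{L^p(C_+)}\simeq\|T\|_{H^p_\eta(\overline{C}_+)}$. This is the first assertion. For the local spaces one repeats the argument verbatim with $\sup_{t>0}$ replaced by $\sup_{0<t<1}$ and with $\|mT\|_p\simeq\|T\|_{h^p(\R)}$ in place of the global equivalence.

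I do not foresee a genuine obstacle here: the statement is a routine distributional transcription of Proposition \ref{pro:0}, and the only points needing a little care are the standard verifications that $p_t*T$ is a well-defined smooth $\eta$-symmetric function, that $(t,x)\mapsto(p_t*T)(x)$ is continuous so that the maximal functions are measurable, and the bookkeeping of the constant $|W|$ arising from the (essentially disjoint) tiling $\R=\bigcup_{g\in W}gC_+$ --- all exactly as in the proof of Proposition \ref{pro:0}.
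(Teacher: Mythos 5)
Your proposal is correct and follows essentially the same route as the paper: the paper likewise reduces to the proof of Proposition~\ref{pro:0} by passing the classical kernel identity \eqref{ble} to the distributional level and invoking the maximal characterization of $H^p(\R)$. The only cosmetic discrepancy is the factor $|W|$ in your identity $\mathcal M_\eta T = |W|\,\mathcal M T$; the paper (implicitly using the $\mathcal S_\eta'(\overline{C}_+)$--$\mathcal S_\eta(\overline{C}_+)$ pairing, where a test function is integrated only over $C_+$) gets $\mathcal M_\eta T=\mathcal M(\EEeta T)$, but since you pair an $\eta$-symmetric $T\in\mathcal S'(\R)$ against a test function integrated over all of $\R$ the extra $|W|$ is exactly right under your convention, and of course has no bearing on the equivalence of quasi-norms.
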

\begin{proof}
Let $0<p<1$. Recall that, after suitable identification,  
$$
H^p_\eta(\overline{C}_+)=\{T\in\mathcal S_\eta'(\overline{C}_+)\colon \EEeta T\in H^p(\R)\},
$$
where $\langle\EEeta T,\varphi\rangle:=\langle T, \mathcal{A}_{\eta}\varphi|_{C_+}\rangle$ for $T\in \mathcal{S}_\eta'(\overline{C_+})$ and 
$\varphi\in\mathcal{S}(\R)$, with analogous equality for  $h^p_\eta(\overline{C}_+)$. 

We shall use the following distributional version of \eqref{ble},
$$
\langle T, p_t^{\eta,\,C_+}(x,\cdot)\rangle=\langle \EEeta T, p_t(x-\cdot)\rangle,\qquad T\in\mathcal S_\eta'(\overline{C}_+),\quad x\in \R;
$$
notice that the pairings refer to two different distribution spaces. With this equality, we essentially copy the reasoning from the proof of 
Proposition \ref{pro:0} to obtain the claimed result.
\end{proof}

\section{Atomic decomposition  in $H^1_\eta$ and $h^1_\eta$ for orthogonal root systems} \label{sec:at}
In this section we consider the orthogonal root system $R_k$, $1\leq k\leq d$, described in Section \ref{sec:prel}. Recall that in this setting 
$\R_{+,k}=\RR^{d-k}\times (0,\infty)^k$ is the corresponding positive Weyl chamber. For any $e_j\in R_{k,+}=\{e_{d-k+1},\ldots,e_d\}$ we write 
$\langle e_j\rangle^\bot_+=\{x\in\R: x_j>0 \}$ and $ \langle e_j\rangle^\bot_-=\{x\in\R: x_j< 0 \}$ for the `upper' and `lower' 
half-space with $\langle e_j\rangle^\bot$ as the supporting hyperplane. We also fix a homomorphism $\eta=(\eta_1,\ldots,\eta_k)\in\Z^k_2$  and write 
\begin{equation*}
\RR^{\eta,0}_+ = \bigcap_{i\colon\eta_i=0}\langle e_{d-k+i} \rangle^\bot_+ =\{x\in\R\colon \,\,x_{d-k+i}>0\,\,\,{\rm when}\,\,\eta_{i}=0,\,\, i=1,\ldots,k\},
\end{equation*}
$$
\RR^{\eta,1}_+ = \bigcap_{i\colon\eta_i=1}\langle e_{d-k+i} \rangle^\bot_+ =\{x\in\R\colon \,\, x_{d-k+i}>0\,\,\,{\rm when}\,\,\eta_{i}=1,\,\, i=1,\ldots,k\},
$$
with the convention that $\RR^{{\bf 1},0}_+=\RR^{{\bf 0},1}_+=\R$ (then the relevant sets of indices are empty; recall that \textbf{0} represents the trivial homomorphism, while \textbf{1} represents the sgn, i.e. the \textsl{det} homomorphism). Observe that $\RR^{{\bf 1},1}_+=\RR^{{\bf 0},0}_+=\R_{+,k}$ and 
$\RR^{\eta,0}_+ \cap \RR^{\eta,1}_+=\R_{+,k}$. The walls of $\R_{+,k}$ are $\mathcal W_i:=\langle e_{d-k+i} \rangle^\bot$, $i=1,\ldots,k$, and the signs attached by $\eta$ to these walls are ${\rm sign}_\eta(\mathcal W_i)=(-1)^{\eta_i}$. In this way the family of walls of $\RR^{\eta,0}_+$ consists of the walls $\mathcal W_i$ 
(if any) labeled by the `$+$' sign, and the family of walls of $\RR^{\eta,1}_+$ consists of these walls (if any) which are labeled by the `$-$' sign.

The essential aim of this section are Theorems \ref{thm:1} and \ref{thm:1(loc)}. To prove them we shall need some auxiliary results, Lemmas \ref{lm:1} and 
\ref{lm:2}. For brevity we state and prove the lemmas only in the `global' case, since the `local' case is analogous and the proofs require only minor 
modifications. See Remarks \ref{rm:1} and \ref{rm:2}.

In the statement of the first lemma the (classical) atomic decomposition is applied. Recall that a function  $a$ on $\R$ is a (classical) atom if $a$ is supported in a cube 
$Q$, and satisfies the cancellation condition $\int_Q a=0$ and the size condition $\Vert a\Vert_{L^2(\R)}\leq |Q|^{-1/2}$. Then $f\in H^1(\R)$ 
if and only if $f=\sum_i \lambda_i a_i$, where $a_i$ are atoms and $\{\lambda_i\}\in\ell^1$; the norm $\|f\|_{H^1(\R)}$ is comparable with the infimum of 
$\sum_i |\lambda_i|$ taken over all such decompositions. Notice that the first part of Lemma \ref{lm:1} is just a simple generalization of the known 
one-dimensional result; see \cite[Lemma~7.40]{G-CRdF}. Till the end of this section we write $\|\cdot\|_{L^2}$ rather than $\|\cdot\|_{L^2(\R)}$. 

\begin{lemma}\label{lm:1}
Let $F\in H^1(\RR^d)$ and $e$ be a coordinate vector. The following two statements hold:
	\begin{enumerate}
	\item if $F$ is even in $ \langle e\rangle^\bot$, i.e. $F=F\circ\s_e$, then $F \ind_{ \langle e\rangle^\bot_+}$ belongs to $H^1(\RR^d)$ and admits an atomic decomposition with atoms supported in $ \langle e\rangle^\bot_+$, and $\Vert F \ind_{ \langle e\rangle^\bot_+}\Vert_{H^1(\R)}\lesssim \Vert F\Vert_{H^1(\RR^d)}$;
	\item the odd extension of $F|_{ \langle e\rangle^\bot_+}$ to $\R$, namely $F\ind_{ \langle e\rangle^\bot_+}-(F\ind_{ \langle e\rangle^\bot_+})\circ\sigma_e$, is in $H^1(\R)$, and
		$$
		\Vert F\ind_{ \langle e\rangle^\bot_+}-(F\ind_{ \langle e\rangle^\bot_+})\circ\sigma_e \Vert_{H^1(\R)}\lesssim\Vert F\Vert_{H^1(\R)}.  
		$$
	\end{enumerate}
\end{lemma}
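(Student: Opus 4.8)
The plan is to reduce both parts to the classical atomic decomposition in $H^1(\R)$ and a careful bookkeeping of atoms relative to the hyperplane $\langle e\rangle^\bot$. Write out $F=\sum_i\lambda_i a_i$ with atoms $a_i$ supported in cubes $Q_i$ and $\sum_i|\lambda_i|\lesssim\|F\|_{H^1(\R)}$. The atoms split into two types: those whose cube $Q_i$ does not meet $\langle e\rangle^\bot$ (so $Q_i$ lies entirely in $\langle e\rangle^\bot_+$ or entirely in $\langle e\rangle^\bot_-$), and those whose cube straddles $\langle e\rangle^\bot$. The first type is unproblematic — multiplying by $\ind_{\langle e\rangle^\bot_+}$ either leaves the atom unchanged or kills it. The whole difficulty is concentrated in the straddling atoms, and that is the step I expect to be the main obstacle.

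\textbf{Handling straddling atoms.} For a straddling atom $a$ supported in a cube $Q$ with $Q\cap\langle e\rangle^\bot\neq\emptyset$, I would argue as follows. First, by the \emph{even} symmetry $F=F\circ\s_e$ in part (1): it is enough to observe that $F\ind_{\langle e\rangle^\bot_+}+(F\ind_{\langle e\rangle^\bot_+})\circ\s_e=F$ (using evenness, the two pieces reassemble $F$ up to the null set $\langle e\rangle^\bot$), so $F\ind_{\langle e\rangle^\bot_+}=\tfrac12(F+F\ind_{\langle e\rangle^\bot_+}-(F\ind_{\langle e\rangle^\bot_+})\circ\s_e)$, i.e. the even truncation equals $\tfrac12 F$ plus $\tfrac12$ of the odd extension of $F|_{\langle e\rangle^\bot_+}$. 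Hence \emph{part (1) follows from part (2)} together with $F\in H^1(\R)$: one gets the norm bound, and the atomic decomposition with atoms supported in $\langle e\rangle^\bot_+$ is then obtained by reflecting the atoms of the odd extension that live in $\langle e\rangle^\bot_-$ across $\langle e\rangle^\bot$ (reflection is an isometry on $H^1$ and atoms go to atoms), merging them with the ones already in $\langle e\rangle^\bot_+$. So the crux is part (2).

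\textbf{The odd extension estimate (part 2).} Set $G:=F\ind_{\langle e\rangle^\bot_+}-(F\ind_{\langle e\rangle^\bot_+})\circ\s_e$, the odd extension of $F|_{\langle e\rangle^\bot_+}$. I would again decompose $F=\sum_i\lambda_i a_i$ and apply the same construction atom-by-atom: $G=\sum_i\lambda_i b_i$ where $b_i:=a_i\ind_{\langle e\rangle^\bot_+}-(a_i\ind_{\langle e\rangle^\bot_+})\circ\s_e$ is the odd extension of $a_i|_{\langle e\rangle^\bot_+}$. It suffices to show each $b_i$ is a fixed multiple of an $H^1(\R)$-atom, in fact $\|b_i\|_{H^1(\R)}\lesssim 1$ uniformly. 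For a non-straddling $a_i$, $b_i$ is either $\pm a_i$ or $\pm a_i\circ\s_e$ or $0$ — clearly fine. For a straddling $a_i$ supported in $Q=Q_i$: then $a_i\ind_{\langle e\rangle^\bot_+}$ is supported in $Q\cap\langle e\rangle^\bot_+\subset Q^+$ where $Q^+$ is the smallest cube containing $Q\cap\langle e\rangle^\bot_+$ with one face on $\langle e\rangle^\bot$, which satisfies $l(Q^+)\le l(Q)$, so $\|a_i\ind_{\langle e\rangle^\bot_+}\|_{L^2}\le\|a_i\|_{L^2}\le|Q|^{-1/2}\lesssim|Q^+|^{-1/2}$. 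Reflect: $b_i$ is supported in $Q^+\cup\s_e(Q^+)\subset 2Q'$ for an appropriate cube $Q'$ of comparable size, with $\|b_i\|_{L^2}\lesssim |Q'|^{-1/2}$. The key point is the cancellation: $\int_{\R}b_i=\int_{\langle e\rangle^\bot_+}a_i-\int_{\langle e\rangle^\bot_-}(a_i\circ\s_e)=\int_{\langle e\rangle^\bot_+}a_i-\int_{\langle e\rangle^\bot_+}a_i=0$ by the change of variables $x\mapsto\s_e x$. So $b_i$ has both the size and cancellation of a constant-multiple atom, hence $\|b_i\|_{H^1(\R)}\lesssim 1$. (One must be a little careful that the doubling constants and the comparability $|Q'|\simeq|Q|$ are absolute; this is where the bookkeeping lives but it is routine.) Summing, $\|G\|_{H^1(\R)}\le\sum_i|\lambda_i|\,\|b_i\|_{H^1(\R)}\lesssim\sum_i|\lambda_i|\lesssim\|F\|_{H^1(\R)}$, taking the infimum over decompositions. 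Then part (1) follows from the identity above, with the atomic decomposition in $\langle e\rangle^\bot_+$ produced by folding. The one subtlety to watch throughout is that $a_i\ind_{\langle e\rangle^\bot_+}$ on its own has \emph{no} cancellation — only the odd (or, in part (1), the even-reconstructed) combination does — which is exactly why part (2) is the right vehicle and why one cannot naively truncate atoms.
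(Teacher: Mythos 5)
Your argument for part (2) is essentially the paper's own: you decompose $F$ into classical atoms $a_i$, pass to the odd extensions $b_i=a_i\ind_{\langle e\rangle^\bot_+}-(a_i\ind_{\langle e\rangle^\bot_+})\circ\sigma_e$, and verify the size, support, and cancellation conditions case by case, with the only substance in the straddling case. That part is correct and matches the paper.

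For part (1), however, the route you chose has a real gap. The identity $F\ind_{\langle e\rangle^\bot_+}=\tfrac12(F+G)$ (with $G$ the odd extension) is correct and does give the norm bound $\|F\ind_{\langle e\rangle^\bot_+}\|_{H^1}\lesssim\|F\|_{H^1}$ from part (2). But the assertion that you then recover an atomic decomposition with atoms \emph{supported in} $\langle e\rangle^\bot_+$ by ``reflecting the atoms of the odd extension that live in $\langle e\rangle^\bot_-$ and merging with those in $\langle e\rangle^\bot_+$'' does not work. The $b_i$ you constructed for $G$ are odd, so (when nonzero) they straddle $\langle e\rangle^\bot$; in the non-straddling case $b_i=a_i-a_i\circ\sigma_e$ and reflecting the $\langle e\rangle^\bot_-$ half onto $\langle e\rangle^\bot_+$ and adding gives $a_i - a_i=0$, not $a_i$. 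More generally, folding any decomposition of an odd function over $\langle e\rangle^\bot$ by \emph{adding} the reflected negative part annihilates it — this is precisely the oddness. You would instead need to subtract, and then you land back at $a_i\ind_{\langle e\rangle^\bot_+}$, which has no cancellation and is not an atom. So the folding step is not ``routine bookkeeping''; it is the whole difficulty, and your reduction to part (2) circles around it without resolving it.

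The paper's part (1) avoids this by folding the atoms of $F$ itself (not of $G$): it replaces each $a_i$ by $\widetilde{a_i}=\tfrac12(a_i+a_i\circ\sigma_e)\ind_{\langle e\rangle^\bot_+}$. The cancellation $\int\widetilde{a_i}=0$ is restored automatically because $\int_{\langle e\rangle^\bot_+}a_i+\int_{\langle e\rangle^\bot_-}a_i=\int a_i=0$, and the support and $L^2$ bound are immediate, so $\widetilde{a_i}$ is a genuine atom in $\langle e\rangle^\bot_+$. Using the evenness of $F$ only once — to verify $F\ind_{\langle e\rangle^\bot_+}=\sum_i\lambda_i\widetilde{a_i}$ — one gets the decomposition directly. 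If you wish to keep your argument for part (1), you must either adopt this direct folding, or else invoke the nontrivial theorem (e.g.\ \cite[Theorem~3.3]{CKS}) that an $H^1(\R)$ function supported in a half-space admits an atomic decomposition with atoms inside that half-space; but then part (2) plays no role in producing the decomposition.
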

\begin{proof}
For the first part, observe that if $a$ is an  atom, then also $\widetilde{a}=\frac12(a+a\circ\s_e)\ind_{ \langle e\rangle^\bot_+}$ is an  atom. 
Indeed, let $a$ be supported in $Q$. If $Q\subset  \langle e\rangle^\bot_+$, then $\widetilde{a}= a/2$ and if  $Q\subset \langle e\rangle^\bot_-$, then 
$\widetilde{a}= a\circ\s_e/2$. In the opposite situation, when $Q\cap \langle e\rangle^\bot\neq \emptyset$, $\widetilde{a}$ is supported in a cube 
$\widetilde{Q}$ with property $|\widetilde{Q}|=|Q|$, and 
$\widetilde{Q}\subset  \langle e\rangle^\bot_+$. Moreover, in all considered cases, $\Vert \widetilde{a}\Vert_{L^2}\leq \Vert a\Vert_{L^2}$ and 
$\int_{\widetilde{Q}}\widetilde{a} =0$. Hence, $\widetilde{a}$ is an  atom supported in $ \langle e\rangle^\bot_+$ regardless of the localization of $Q$.
Let $F=\sum_{i\in\N} \lambda_i a_i$ be an atomic decomposition of $F$, even in $\langle e\rangle^\bot$. Hence,
$$
F \ind_{ \langle e\rangle^\bot_+}=\sum_{i\in\N} \lambda_i \widetilde{a_i}
$$
and the claim follows.
	
For the second statement, let $F=\sum_{i\in\N} \lambda_i a_i$ be an atomic decomposition of $F\in H^1(\R)$. Denote 
$\widetilde{F}=F\ind_{\langle e\rangle^\bot_+}-(F\ind_{ \langle e\rangle^\bot_+})\circ\sigma_e$. Clearly,
	$$
	\widetilde{F}= \sum_{i\in\N} \lambda_i \big(a_i\ind_{ \langle e\rangle^\bot_+} -(a_i\ind_{ \langle e\rangle^\bot_+})\circ\sigma_e\big).
	$$
Let $a$, supported in a cube $Q$, be one of the atoms in the decomposition of $F$. Define 
$\widetilde{a}=(a\ind_{\langle e\rangle^\bot_+}-(a\ind_{\langle e\rangle^\bot_+})\circ\sigma_e)$. If $Q\subset \langle e\rangle^\bot_-$, then $\widetilde{a}=0$. On the other hand, if $Q\subset  \langle e\rangle^\bot_+$, then $\widetilde{a}$ is just the sum of two  atoms supported in $Q$ and $\sigma_e(Q)$, respectively. It remains to consider the case when $Q$ is located on both sides of $\langle e\rangle^\bot$. Notice that then $\widetilde{a}$ is supported in a cube 
$\widetilde{Q}$ such that $Q\cup\sigma_e(Q)\subset \widetilde{Q}$ and $l(\widetilde{Q})\le 2l(Q)$. 
This implies that $|Q|\leq |\widetilde{Q}|\leq 2^d |Q|$. Therefore,
$$
\Vert \widetilde{a}\Vert_{L^2}\leq 2 \Vert a\Vert_{L^2}\leq 2|Q|^{-1/2}\leq 2^{1+d/2} |\widetilde{Q}|^{-1/2}. 
$$
Hence, $2^{-1-d/2}\widetilde{a}$ is an  atom, $\widetilde{F}$ admits an atomic decomposition, and thus it is in $H^1(\R)$. Moreover, 
$\Vert\widetilde{F}\Vert_{H^1(\R)}\lesssim\Vert F\Vert_{H^1(\R)}$. This concludes the proof.
\end{proof}

\begin{remark} \label{rm:1}
Lemma \ref{lm:1} remains valid in the `local' setting, i.e. when in the statement we replace $H^1(\RR^d)$ by $h^1(\RR^d)$, and the word `atom' by the phrase
`local atom' (we present this notion later on in this section). The proof, \textit{mutatis mutandis},  is a copy of that of Lemma \ref{lm:1}.
\end{remark}

Now we define two types of atoms, which will appear in the atomic decomposition of $H^1_\eta(\R_{+,k})$. This definition has its origins in \cite[p.~294]{CKS} 
(cf. also Section 2.1 of   \cite{AR}-arXiv).
\begin{definition}\label{d1}
Let $I_0,I_1$ be two disjoint subsets of $\{1,\ldots,d\}$. Fix a function $a(x)$ supported in a cube $Q\subset\bigcap_{i\in I_0\cup I_1}\langle e_i\rangle^\bot_+$ 
and such that $\Vert a\Vert_{L^2}\leq |Q|^{-1/2}$. We say that $a$ is
	\begin{itemize}
		\item ($I_0$,$I_1$,A)-atom if $4Q\subset \bigcap_{i\in I_1} \langle e_i \rangle^\bot_+$ and $\int_Q a=0$;
		\item ($I_0$,$I_1$,B)-atom if $2Q\subset \bigcap_{i\in I_1} \langle e_i \rangle^\bot_+$ and $4Q\not\subset \bigcap_{i\in I_1} \langle e_i \rangle^\bot_+$. 
	\end{itemize}
If any of the above intersections is taken over the empty set, then it is equal to $\R$ by convention.
\end{definition}
Notice that if $a$ is an atom of either type, then $\Vert a\Vert_{L^1(\R)}\leq 1$. Notice also that in the case $I_1=\emptyset$ there are no 
$(I_0$,$\emptyset,B)$-atoms and the ($I_0$,$\emptyset,A)$-atoms are the classical atoms in $\R$ with supporting cubes contained in 
$\bigcap_{i\in I_0}\langle e_i\rangle^\bot_+$; notably, for $I_0=\emptyset$ the ($\emptyset$,$\emptyset,A)$-atoms are the classical atoms in $\R$.

Usually, given $\eta$, we will consider $I_0=I_{\eta,0}:=d-k+\{i\colon \eta_i=0\}$ and $I_1=I_{\eta,1}:=d-k+\{i\colon \eta_i=1\}$, then saying about ($\eta,A$)- and ($\eta,B$)-atoms, respectively. Obviously, 
for $\eta={\bf 0}$, $I_{{\bf 0},0}=\{d-k+1,\ldots,d\}$, $I_{{\bf 0},1}=\emptyset$, and then $({\bf 0},A)$-atoms are just the classical  atoms supported 
in $\RR^{{\bf 0},0}_+=\R_{+,k}$, and there are no (${\bf 0},B$)-atoms. Geometrically, $(\eta,A/B)$-atoms are both supported in $\R_{+,k}$ but from 
the perspective of the larger $\RR^{\eta,1}_+ $, $(\eta,A)$-atoms are `well inside' it, while $(\eta,B)$-atoms are `boundary atoms', i.e. are relatively 
close to one of the walls of $\RR^{\eta,1}_+ $. In other words, from the perspective of $\R_{+,k}$, only the walls labeled by the `$-$' sign matter: 
$(\eta,A)$-atoms are supported relatively far from such walls, while $(\eta,B)$-atoms are supported relatively close to at least one of such  walls  
(supports of atoms of either type may touch any wall of $\R_{+,k}$ labeled by the `$+$' sign). 

In order to prove Theorem \ref{thm:1} we will consecutively apply Lemma \ref{lm:1} and the key Lemma \ref{lm:2} below. 
\begin{lemma}\label{lm:2}
	Let  $I_0$ and $I_1$ be disjoint subsets of $\{1,\ldots,d\}$, $d\ge1$, such that $|I_0\cup I_1|< d$ and let $n\in \{1,\ldots,d\}\setminus (I_0\cup I_1)$. 
	Let $F\in L^1(\R)$ be given by
\begin{equation}\label{eq:2}
	F=\sum_{i\in\N}\lambda_i a_i + \mu_i b_i\quad with\quad \sum_{i\in\N}|\lambda_i|+|\mu_i|<\infty,
	\end{equation}
	where $a_i/b_i$ are ($I_{0}$,$I_{1}$,A/B)-atoms, respectively. Then we have the decomposition 
	$$
	F\ind_{\langle e_n\rangle_+^\bot}=\sum_{i\in\N}\widetilde{\lambda_i} \widetilde{a_i} + \widetilde{\mu_i} \widetilde{b_i} \quad  with\quad  \sum_{i\in\N}|\widetilde{\lambda_i}|+|\widetilde{\mu_i}|<\infty,
	$$
where $\widetilde{a_i}/\widetilde{b_i}$ are ($I_{0}$,$I_{1}\cup\{n\}$,A/B)-atoms, respectively. Moreover,
	\begin{equation}\label{eq:5}
	\sum_{i\in\N}|\widetilde{\lambda_i}|+|\widetilde{\mu_i}|\lesssim \sum_{i\in\N}|\lambda_i|+|\mu_i|. 
	\end{equation}
\end{lemma}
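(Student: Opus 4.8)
The plan is to reduce the statement, by linearity of $F\mapsto F\ind_{\langle e_n\rangle^\bot_+}$, to a single atom: it suffices to prove that there is a constant $C_d$ (depending only on $d$) so that every $(I_0,I_1,A)$- or $(I_0,I_1,B)$-atom $c$ supported in a cube $Q$ admits a decomposition
$$
c\,\ind_{\langle e_n\rangle^\bot_+}=\sum_W\nu_W\,\widetilde{c_W},\qquad \sum_W|\nu_W|\le C_d,
$$
with each $\widetilde{c_W}$ an $(I_0,I_1\cup\{n\},A)$- or $(I_0,I_1\cup\{n\},B)$-atom (the index set being at most countable). Plugging this into \eqref{eq:2} for each $a_i$ and $b_i$, relabelling, and using $\sum_i|\lambda_i|+|\mu_i|<\infty$ together with the bound $\le1$ for the $L^1(\R)$-norm of an atom — which, since $F\in L^1(\R)$, makes the resulting double series absolutely $L^1(\R)$-convergent to $F\ind_{\langle e_n\rangle^\bot_+}$ — yields \eqref{eq:5}. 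Two elementary observations about centred dilates will be used repeatedly: if $W\subset Q$ are cubes with $l(W)\le l(Q)/2$, then $2W\subset 2Q$; and whether $\lambda V\subset\langle e_i\rangle^\bot_+$ depends only on the $i$-th coordinate interval of $V$, hence (as $n\notin I_0\cup I_1$) is unaffected by altering only the $n$-th interval of $Q$. Recall also that every $(I_0,I_1,A/B)$-atom supported in $Q$ has $2Q\subset\bigcap_{i\in I_1}\langle e_i\rangle^\bot_+$ (for an $A$-atom even $4Q$).

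To construct the decomposition, set $\ell=l(Q)$ and argue by cases on the position of $Q$ relative to $\langle e_n\rangle^\bot=\{x_n=0\}$. If $Q\cap\langle e_n\rangle^\bot_+=\emptyset$, then $c\ind_{\langle e_n\rangle^\bot_+}=0$. If $4Q\subset\langle e_n\rangle^\bot_+$, then $c\ind_{\langle e_n\rangle^\bot_+}=c$ is itself an $(I_0,I_1\cup\{n\},A/B)$-atom of the same type as $c$. If $2Q\subset\langle e_n\rangle^\bot_+$ but $4Q\not\subset\langle e_n\rangle^\bot_+$, then again $c\ind_{\langle e_n\rangle^\bot_+}=c$, now an $(I_0,I_1\cup\{n\},B)$-atom (no cancellation required). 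In the remaining case $2Q\not\subset\langle e_n\rangle^\bot_+$ with $Q\cap\langle e_n\rangle^\bot_+\neq\emptyset$, the $n$-th interval of $Q$ is contained in $(-\infty,\tfrac32\ell)$, and I split
$$
c\,\ind_{\langle e_n\rangle^\bot_+}=c\,\ind_{\{x_n>\ell/2\}}+c\,\ind_{\{0<x_n\le\ell/2\}}.
$$
The first summand is supported in the cube $W_\ast$ obtained from $Q$ by replacing its $n$-th interval with $(\ell/2,\tfrac32\ell)$; after $L^2$-normalization it is a single $(I_0,I_1\cup\{n\},B)$-atom. The second is supported in a slab contained in $Q$ of $n$-th width $\beta-\alpha$ with $0\le\alpha<\beta\le\ell/2$; I cover it by a finitely overlapping family of open cubes $W\subset Q$ with ${\rm dist}(W,\langle e_n\rangle^\bot)=l(W)$ — dyadically accumulating at $\{x_n=0\}$ when $\alpha=0$, and through the finitely many scales $2^j\alpha\le\tfrac12\ell$ when $\alpha>0$ — and after $L^2$-normalization each $c\ind_W$ is an $(I_0,I_1\cup\{n\},B)$-atom.

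The $\ell^1$-bound, which is the crux, comes from Cauchy--Schwarz: with $\nu_W:=|W|^{1/2}\|c\ind_W\|_{L^2}$ (discarding $W$ with $\nu_W=0$) and $\widetilde{c_W}:=\nu_W^{-1}c\ind_W$,
$$
\sum_W\nu_W\le\Big(\sum_W|W|\Big)^{1/2}\Big(\sum_W\|c\ind_W\|_{L^2}^2\Big)^{1/2}\lesssim|Q|^{1/2}\|c\|_{L^2}\le1,
$$
using the finite overlap and $\bigcup_W W\subset Q\cup W_\ast$ (so $\sum_W|W|\lesssim|Q|$), the finite overlap again (so $\sum_W\|c\ind_W\|_{L^2}^2\lesssim\|c\|_{L^2}^2$), and $\|c\|_{L^2}\le|Q|^{-1/2}$; thus the number of cubes is immaterial. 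It remains to check that each $\widetilde{c_W}$ meets Definition \ref{d1}: the size bound holds by construction; the base condition follows from $W\subset Q\subset\bigcap_{i\in I_0\cup I_1}\langle e_i\rangle^\bot_+$ and $W\subset\langle e_n\rangle^\bot_+$; for the small cubes, $2W\subset 2Q\subset\bigcap_{i\in I_1}\langle e_i\rangle^\bot_+$ by the first elementary fact (since $l(W)\le\tfrac12\ell$), combined with ${\rm dist}(W,\langle e_n\rangle^\bot)=l(W)\ge\tfrac12 l(W)$, gives $2W\subset\bigcap_{i\in I_1\cup\{n\}}\langle e_i\rangle^\bot_+$, while for $W_\ast$ the same follows from the second elementary fact and its $n$-th interval $(\ell/2,\tfrac32\ell)$; finally ${\rm dist}(W,\langle e_n\rangle^\bot)<\tfrac32 l(W)$ forces $4W\not\subset\langle e_n\rangle^\bot_+$, hence $4W\not\subset\bigcap_{i\in I_1\cup\{n\}}\langle e_i\rangle^\bot_+$.

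The genuine obstacle I anticipate is the last case when $Q\subset\langle e_n\rangle^\bot_+$ but $Q$ lies closer than $\tfrac12 l(Q)$ to the wall: then $c\ind_{\langle e_n\rangle^\bot_+}=c$, nothing is cut away, yet $c$ is \emph{not} an admissible $(I_0,I_1\cup\{n\},B)$-atom (its cube is too close to $\langle e_n\rangle^\bot$ at its own scale) and no larger cube accommodates $c$ with the correct $L^2$-normalization, so a non-trivial decomposition is unavoidable even though the restriction does nothing. Resolving this by transplanting the portion of $c$ at distance $\ge\tfrac12\ell$ from the wall into the shifted cube $W_\ast$ — legitimate exactly because moving only the $n$-th interval preserves every $I_0$- and $I_1$-condition, $n\notin I_0\cup I_1$ — and decomposing the residual thin layer toward the wall, is the one step where the argument is sensitive to the precise shape of the atom conditions, and checking that this shift does not break the $I_1$-dilation conditions is the key computation.
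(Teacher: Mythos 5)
Your proposal is correct and follows essentially the same route as the paper's proof: the same case split on the position of the supporting cube $Q$ relative to $\langle e_n\rangle^\bot$ (with the easy cases giving $A$- or $B$-atoms outright), a Whitney-type decomposition adjacent to the wall for the hard case, and Cauchy--Schwarz to control the $\ell^1$-norm of the coefficients. The only cosmetic differences are that you peel off the region $\{x_n>\ell/2\}$ into a single shifted cube $W_\ast$ before running the Whitney cover on the thin near-wall slab, whereas the paper just Whitney-decomposes $U\cap\langle e_n\rangle^\bot_+$ uniformly and keeps the cubes meeting $Q$, and that you bound $\sum_W|W|\lesssim|Q|$ by finite overlap rather than the paper's explicit count $j_m\lesssim 2^{(m-m_0)(d-1)}$ -- both yield the same dimensional constant.
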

\begin{proof}
Fix $I_0,I_1$, $n$, and $F$ as in the statement so that $F$ is supported in $U:=\bigcap_{i\in I_0\cup I_1} \langle e_i \rangle^\bot_+$ (which is an open 
polyhedral cone with $|I_0\cup I_1|$ mutually perpendicular walls when $|I_0\cup I_1|>0$, and is equal to $\R$ when $I_0=I_1=\emptyset$). By \eqref{eq:2} we have
$$
	F\ind_{ \langle e_n\rangle^\bot_+} = \sum_{i\in\N}\lambda_i a_i\ind_{ \langle e_n\rangle^\bot_+} + \mu_i b_i\ind_{ \langle e_n\rangle^\bot_+}.
$$
	
Firstly, we consider $a_i$. For $i\in\N$ let $Q_i\subset U$ be the cube corresponding to the $(I_0,I_1,A)$-atom $a_i$. We distinguish four cases depending 
on the location of $Q_i$ with respect to $ \langle e_n\rangle^\bot_+$:
	\begin{enumerate}
		\item $4Q_i\subset  \langle e_n\rangle^\bot_+$;
		\item $2Q_i\subset  \langle e_n\rangle^\bot_+$ and $4Q_i\not\subset  \langle e_n\rangle^\bot_+$;
		\item $Q_i\cap  \langle e_n\rangle^\bot_+\neq\emptyset$ and $2Q_i\not\subset  \langle e_n\rangle^\bot_+$;
		\item $Q_i\cap  \langle e_n\rangle^\bot_+=\emptyset$.
	\end{enumerate}
Notice that the fourth possibility will not be counted, because $a_i\ind_{ \langle e_n\rangle^\bot_+}\equiv 0$. Hence, we can write
$$
\sum_{i\in\N}\lambda_i a_i\ind_{\langle e_n\rangle^\bot_+}=\sum_{i\in\N}\big(\lambda_i^1 a_i^1+\lambda_i^2 a_i^2+\lambda_i^3 a_i^3\ind_{\langle e_n\rangle^\bot_+}\big),
$$
where $a_i^m$, $m\in\{1,2,3\}$, correspond to the $m$-th case above, and hence $\{\lambda_i^1\}\cup \{\lambda_i^2\}\cup \{\lambda_i^3\}\subset \{\lambda_i\}$.
	
We repeat these steps for $b_i$ obtaining 
$$
F\ind_{ \langle e_n\rangle^\bot_+} = \sum_{i\in\N} \big( \lambda_i^1 a_i^1 + \lambda_i^2 a_i^2 + \lambda_i^3 a_i^3\ind_{\langle e_n\rangle^\bot_+}+\mu_i^1 b_i^1 + \mu_i^2 b_i^2 + \mu_i^3 b_i^3\ind_{\langle e_n\rangle^\bot_+}\big),
$$
additionally with $\{\mu_i^1\}\cup \{\mu_i^2\}\cup \{\mu_i^3\}\subset \{\mu_i\}$. Letting $I_1':=I_{1}\cup\{n\}$ observe that $a_i^1$ are $(I_{0},I_{1}',A)$-atoms and $a_i^2,b^1_i,b^2_i$ are $(I_{0},I_{1}',B)$-atoms. Thus, we are left with $a_i^3\ind_{ \langle e_n\rangle^\bot_+}$ and 
$b_i^3\ind_{ \langle e_n\rangle^\bot_+}$. We will show that the latter functions can be decomposed into a sum of ($I_{0},I_1',B)$-atoms with coefficients that sum up to a uniformly majorized constant. For this, rather than to treat each of the two cases separately, we fix a function $G$ supported in a cube 
$Q\subset U$, such that $Q\cap\langle e_n\rangle^\bot_+\neq\emptyset$, $2Q\not\subset\langle e_n\rangle^\bot_+$, and 
$\Vert G\Vert_{L^2}\leq |Q|^{-1/2}$ ($a_i^3\ind_{\langle e_n\rangle^\bot_+}$ and $b_i^3\ind_{ \langle e_n\rangle^\bot_+}$ satisfy these conditions). We will verify that these conditions are sufficient to perform the decomposition.
	
We introduce the Whitney partition of $U\cap\langle e_n\rangle^\bot_+$ (an open polyhedral cone with simultaneously perpendicular $|I_0\cup I_1|+1$ walls),  
relative to the hyperplane  $\langle e_n\rangle^\bot$,
$$
U\cap  \langle e_n\rangle^\bot_+=\bigcup_{m\in\Z}\, \bigcup_{D\in\calD_m} D, 
$$
where $\calD_m$ consists of all diadic cubes $D\subset U\cap\langle e_n\rangle^\bot_+$ with corners in $2^{-m}\cdot\Z^d$, 
such that 
$$ 
l(D)=\mathrm{dist}\,\big(D,\langle e_n\rangle^\bot\big)=2^{-m}.
$$
Notice that this means that $2D\subset  U\cap  \langle e_n\rangle^\bot_+$, but $4D\not\subset U\cap  \langle e_n\rangle^\bot_+$. 

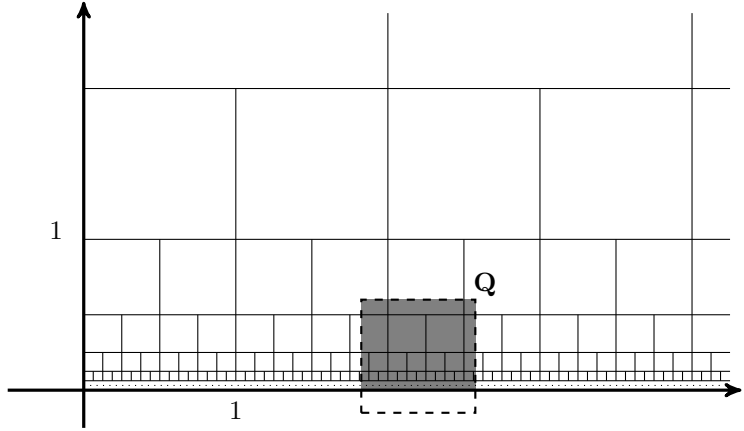
\begin{figure}[ht]
\begin{tikzpicture}
[scale=0.5, axis/.style={very thick, ->, >=stealth'}]

\fill[black!50] (7.3,0) rectangle (10.3,2.4);
\draw[thick, dashed] (7.3,-0.6) rectangle (10.3,2.4);
\node[label={[above right]$\mathbf{Q}$}] at (10,2.0){} ;
\draw[axis] (-2,0)  -- (17.3,0) ;
\draw[axis] (0,-1) -- (0,10.3) ;

\node[label={[below]$1$}] at (4,-0.3){} ;
\node[label={[left]$1$}] at (-0.3,4){} ;

\draw  (0,1)--(17,1);
\draw  (0,2)--(17,2);
\draw  (0,4)--(17,4);
\draw  (0,8)--(17,8);
\draw  (0,1/2)--(17,1/2);
\draw  (0,1/4)--(17,1/4);

\draw  (4,1/4)--(4,8);
\draw  (8,1/4)--(8,10);
\draw  (12,1/4)--(12,8);
\draw  (16,1/4)--(16,10);

\draw  (2,1/4)--(2,4);
\draw  (6,1/4)--(6,4);
\draw  (10,1/4)--(10,4);
\draw  (14,1/4)--(14,4);

\draw  (1,1/4)--(1,2);
\draw  (3,1/4)--(3,2);
\draw  (5,1/4)--(5,2);
\draw  (7,1/4)--(7,2);
\draw  (9,1/4)--(9,2);
\draw  (11,1/4)--(11,2);
\draw  (13,1/4)--(13,2);
\draw  (15,1/4)--(15,2);

\draw  (1/2,1/4)--(1/2,1);
\draw  (3/2,1/4)--(3/2,1);
\draw  (5/2,1/4)--(5/2,1);
\draw  (7/2,1/4)--(7/2,1);
\draw  (9/2,1/4)--(9/2,1);
\draw  (11/2,1/4)--(11/2,1);
\draw  (13/2,1/4)--(13/2,1);
\draw  (15/2,1/4)--(15/2,1);
\draw  (17/2,1/4)--(17/2,1);
\draw  (19/2,1/4)--(19/2,1);
\draw  (21/2,1/4)--(21/2,1);
\draw  (23/2,1/4)--(23/2,1);
\draw  (25/2,1/4)--(25/2,1);
\draw  (27/2,1/4)--(27/2,1);
\draw  (29/2,1/4)--(29/2,1);
\draw  (31/2,1/4)--(31/2,1);
\draw  (33/2,1/4)--(33/2,1);

\draw  (1/4,1/4)--(1/4,1/2);
\draw  (3/4,1/4)--(3/4,1/2);
\draw  (5/4,1/4)--(5/4,1/2);
\draw  (7/4,1/4)--(7/4,1/2);
\draw  (9/4,1/4)--(9/4,1/2);
\draw  (11/4,1/4)--(11/4,1/2);
\draw  (13/4,1/4)--(13/4,1/2);
\draw  (15/4,1/4)--(15/4,1/2);
\draw  (17/4,1/4)--(17/4,1/2);
\draw  (19/4,1/4)--(19/4,1/2);
\draw  (21/4,1/4)--(21/4,1/2);
\draw  (23/4,1/4)--(23/4,1/2);
\draw  (25/4,1/4)--(25/4,1/2);
\draw  (27/4,1/4)--(27/4,1/2);
\draw  (29/4,1/4)--(29/4,1/2);
\draw  (31/4,1/4)--(31/4,1/2);
\draw  (33/4,1/4)--(33/4,1/2);
\draw  (35/4,1/4)--(35/4,1/2);
\draw  (37/4,1/4)--(37/4,1/2);
\draw  (39/4,1/4)--(39/4,1/2);
\draw  (41/4,1/4)--(41/4,1/2);
\draw  (43/4,1/4)--(43/4,1/2);
\draw  (45/4,1/4)--(45/4,1/2);
\draw  (47/4,1/4)--(47/4,1/2);
\draw  (49/4,1/4)--(49/4,1/2);
\draw  (51/4,1/4)--(51/4,1/2);
\draw  (53/4,1/4)--(53/4,1/2);
\draw  (55/4,1/4)--(55/4,1/2);
\draw  (57/4,1/4)--(57/4,1/2);
\draw  (59/4,1/4)--(59/4,1/2);
\draw  (61/4,1/4)--(61/4,1/2);
\draw  (63/4,1/4)--(63/4,1/2);
\draw  (65/4,1/4)--(65/4,1/2);
\draw  (67/4,1/4)--(67/4,1/2);

\draw[dotted] (0,1/8) -- (17,1/8);

\end{tikzpicture}
\caption{The Whitney decomposition in the case $d=2$, $U=(0,\infty)\times \RR$, and $n=2$, with an example of the cube $Q$.}
\label{fig:1}
\end{figure}

Now we write
\begin{equation}\label{new}
	Q\cap \langle e_n\rangle^\bot_+=\bigcup_{m\in\Z}\, \bigcup_{j=1}^{j_m}Q\cap  D_{mj}, 
\end{equation}
where $D_{mj}\in\calD_m$ and $Q\cap D_{mj}\neq \emptyset$. In \eqref{new} and in similar places, equality is meant up to a set of Lebesgue measure zero.
	
We now establish a bound on $j_m$ which is crucial for further development. Obviously, with given $Q$, $j_m$ depends on the choice of $I_0,I_1$ and $n$ but 
it is geometrically clear that the `worst' case is when $|I_0\cup I_1|=d-1$ (then the  number of walls of $U\cap\langle e_n\rangle^\bot_+$ is $d$). 
Hence, estimating $j_m$, we can simply assume that $U=(0,\infty)^{d-1}\times \RR$ and $n=d$, so that $U\cap\langle e_n\rangle^\bot_+=\R_+$.
We claim that for any admissible $Q$ with $2^{-m_0-1}<l(Q)\leq 2^{-m_0}$ we have
\begin{equation}\label{eq:1}
j_m\le C_d\times
\begin{cases}
2^{(m-m_0)(d-1)}, \qquad m\ge m_0,\\
 0, \,\,\,\,\,\qquad m< m_0. 
\end{cases}\\
\end{equation}
A dilation argument shows that it suffices to prove this estimate for $m_0=0$. Thus, let $Q'\subset (0,\infty)^{d-1}\times \RR$ be such that 
$Q'\cap\langle e_d\rangle^\bot_+\neq\emptyset$, $2Q'\not\subset\langle e_d\rangle^\bot_+$, $l(Q')=1$, and \eqref{new} holds. By the geometry of situation, 
it is clear that $j_m=0$ for $m<0$ and $j_m\le C_d2^{m(d-1)}$ for $m\ge0$ ($C_d=2^d-1$ suffices) which proves the claim.  
	
We now decompose $G\ind_{\langle e_n\rangle^\bot_+}$ as
$$
	G\ind_{\langle e_n\rangle^\bot_+}=\sum_{m=m_0}^\infty \sum_{j=1}^{j_m} \frac{1}{\alpha_{mj}} \big( \alpha_{mj} G\ind_{Q\cap D_{mj}}\big),
$$
where 
$$
	\alpha_{mj}=|D_{mj}|^{-1/2} \Vert G\ind_{Q\cap D_{mj}}\Vert_{L^2}^{-1}. 
$$
Observe that with this choice of $\alpha_{mj}$ the functions $G_{mj}:=\alpha_{mj} G\ind_{Q\cap D_{mj}}$ are $(I_0,I_1',B)$-atoms; this is because  
$G_{mj}$ is supported in the cube $D_{mj}$ and $\Vert G_{mj}\Vert_{L^2} =|D_{mj}|^{-1/2}$. Additionally, by using \eqref{eq:1} we obtain
	\begin{align*}
\sum_{m=m_0}^\infty \sum_{j=1}^{j_m} |\alpha_{mj}|^{-1}
&\le\Big( \sum_{m=m_0}^\infty \sum_{j=1}^{j_m} |D_{mj}|\Big)^{1/2} \Big( \sum_{m=m_0}^\infty \sum_{j=1}^{j_m}\Vert G\ind_{Q\cap D_{mj}}\Vert_{L^2}^2\Big)^{1/2}\\
&=\Big(\sum_{m=m_0}^\infty \sum_{j=1}^{j_m}|D_{mj}|\Big)^{1/2}\Vert G\ind_{\langle e_n\rangle^\bot_+}\Vert_{L^2}\\
&\le\Big(\sum_{m=m_0}^\infty\sum_{j=1}^{j_m}2^{-dm}\Big)^{1/2}\Vert G\Vert_{L^2}\\
&\leq \big(2^dC_d\big)^{1/2}.
	\end{align*}
The latter inequality is a consequence of $\Vert G\Vert_{L^2}\leq |Q|^{-1/2}$, and 
\begin{equation*}
\sum_{m=m_0}^\infty\sum_{j=1}^{j_m}2^{-dm}\le C_d 2^{-m_0(d-1)}\sum_{m=m_0}^\infty 2^{-m}\le2^dC_d|Q|.
\end{equation*}
	
Combining the above shows that $G=\sum_{k\in\N}\tau_k\hat b_k$, where $\hat b_k$ are  $(I_{0},I_{1}',B)$-atoms and $\sum_k |\tau_k|$ is bounded by a 
universal constant not depending on $G$. Thus, for every $i\in \N$, $a_i^3\ind_{\langle e_n\rangle^\bot_+}$ and $b_i^3\ind_{ \langle e_n\rangle^\bot_+}$ 
possess the analogous decompositions. This proves existence of required decomposition of $F\ind_{\langle e_n\rangle_+^\bot}$ with coefficients satisfying 
\eqref{eq:5} (which follows from observations made in between) and finishes the proof of the lemma.
\end{proof}

\begin{remark} \label{rm:2}
Lemma \ref{lm:2} remains valid in the `local' setting, i.e. when in the statement we modify the phrases `($I_{0}$,$I_{1}$,A/B)-atoms' and 
`($I_{0}$,$I_{1}\cup\{n\}$,A/B)-atoms' by adding the word `local' in front of them (we define these local analogues later on in this section, 
see Definition \ref{def:2}). The proof is a copy of that of Lemma \ref{lm:2} but we add the following comments. 

Although local $(I_0,I_1,A/B)$-atoms differ from $(I_0,I_1,A/B)$-atoms by limiting first to the case $l(Q)\le1$ and then admitting local $(I_0,I_1,B)$-atoms 
that satisfy merely the condition $l(Q)>1$ (apart of the basic conditions on the supporting cube and the $L^2$ norm), the first step, the decomposition of 
$F\ind_{ \langle e_n\rangle^\bot_+}$ into a series of summands of six types, relies on distinguishing local atoms according to the location of their supporting 
cubes. Keeping the notation from the 
proof of Lemma \ref{lm:2} we arrive to the point where $a_i^3\ind_{\langle e_n\rangle^\bot_+}$ and $b_i^3\ind_{\langle e_n\rangle^\bot_+}$ should be 
decomposed into a series of local $(I_0,I_1',B)$-atoms with coefficients that sum up to a uniformly majorized constant. Again the task reduces to considering  
a function $G$ supported in a cube $Q\subset U$, such that $Q\cap\langle e_n\rangle^\bot_+\neq\emptyset$, $2Q\not\subset\langle e_n\rangle^\bot_+$, 
and $\Vert G\Vert_{L^2}\leq |Q|^{-1/2}$, and to decompose it into a series of local $(I_0,I_1',B)$-atoms with relevant control of coefficients.
\end{remark}

We are now ready to state and prove one of the two main results of this section. Notice that implicitly Theorem \ref{thm:1} contains the statement 
that ($\eta,A/B$)-atoms belong to $H^1_\eta(\R_{+,k})$. Note, however, that for $\eta={\bf 0}$ there are no $(\textbf{0},B)$-atoms, and consequently, 
the representation of $f$ in (2) has only $(\textbf{0},A)$-atoms, i.e. $\mu_i\equiv 0$. Also, for $\eta={\bf 1}$, the assumptions 
imposed on the support of $F$ are void (recall that $\RR^{{\bf 1},0}_+=\R$ and $\RR^{{\bf 1},1}_+=\R_{+,k}$) and (3) and (4) are identical. 
Finally, observe that since $\RR^{\eta,0}_+\subset \big(\RR^{\eta,1}_+\setminus \R_{+,k}\big)^{\mathrm{c}}$, for $\eta\neq{\bf 1}$ condition (4) 
looks \textit{a priori}  weaker than condition (3) but \textit{a posteriori} both are equivalent.
\begin{theorem}\label{thm:1}
	Let $f\in L^1(\R_{+,k})$ and $\eta\in\Z_2^k$. The following conditions are equivalent: 
	\begin{enumerate}
		\item $f\in H^1_\eta(\R_{+,k})$;
		\item $f=\sum_{i\in\N} \lambda_i a_i+\mu_i b_i$, where $a_i/b_i$ are ($\eta$,A/B)-atoms, respectively, and $\{\lambda_i\},\{\mu_i\}\in\ell^1(\N)$;
		\item there exists $F\in H^1(\RR^d)$, supported in $\RR^{\eta,0}_+$ and  such that $F\big|_{\R_{+,k}}=f$;
		\item there exists $F\in H^1(\RR^d)$, supported in $(\RR^{\eta,1}_+\setminus \R_{+,k})^c$ and  such that $F\big|_{\R_{+,k}}=f$.
		\end{enumerate}
	Moreover, the relevant quantities are comparable, namely
	\begin{equation}\label{eq:4}
	\Vert  f\Vert_{H^1_\eta(\R_{+,k})} \simeq \inf \sum_{i\in\N}  |\lambda_i|+|\mu_i| \simeq \inf_F \Vert F\Vert_{H^1(\R)}, 
	\end{equation}
	where the first infimum is taken over all decompositions as in (2) and the second over all $F$ as in (3) or (4).
\end{theorem}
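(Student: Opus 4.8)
The plan is to prove the cycle of implications $(2)\Rightarrow(3)\Rightarrow(4)\Rightarrow(1)\Rightarrow(2)$, with the norm comparisons tracked along the way. The implication $(3)\Rightarrow(4)$ is essentially immediate, since $\RR^{\eta,0}_+\subset(\RR^{\eta,1}_+\setminus\R_{+,k})^{\mathrm c}$, so any $F$ admissible for (3) is also admissible for (4), with the same $H^1(\R)$ norm; this gives one half of the second equivalence in \eqref{eq:4} for free. The core analytic content is split between the two remaining nontrivial arrows.

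\textbf{Step 1: $(2)\Rightarrow(3)$.} Given an $(\eta,A/B)$-atomic decomposition $f=\sum_i\lambda_i a_i+\mu_i b_i$, one must produce a single $F\in H^1(\R)$ supported in $\RR^{\eta,0}_+$ with $F|_{\R_{+,k}}=f$. Since each $(\eta,A)$- and $(\eta,B)$-atom is supported in a cube $Q\subset\R_{+,k}\subset\RR^{\eta,0}_+$ and is, up to a fixed dilation constant, a classical atom in $\R$ (the cancellation condition holds for $A$-atoms; $B$-atoms are handled via the standard trick of splitting off a small multiple of $\ind_{2Q}$ to restore mean zero, costing only a bounded factor because $2Q$ still sits in $\RR^{\eta,1}_+$ close to a wall, hence in the relevant cone), the series $F:=\sum_i\lambda_i a_i+\mu_i b_i$ converges in $H^1(\R)$ with $\|F\|_{H^1(\R)}\lesssim\sum_i|\lambda_i|+|\mu_i|$, is supported in $\RR^{\eta,0}_+$, and restricts to $f$. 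This also shows in passing that $(\eta,A/B)$-atoms lie in $H^1_\eta(\R_{+,k})$, once combined with the last step.

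\textbf{Step 2: $(4)\Rightarrow(1)$ and the reverse norm bound.} Suppose $F\in H^1(\R)$ is supported in $(\RR^{\eta,1}_+\setminus\R_{+,k})^{\mathrm c}$ with $F|_{\R_{+,k}}=f$. Apply the $\eta$-averaging operator: I claim $\EEeta f=\mathcal A_\eta F$ up to normalization, or more precisely that $|W|\,\mathcal A_\eta F$ is an $\eta$-symmetric function on $\R$ restricting to $f$ on $C_+=\R_{+,k}$, hence equals $\EEeta f$. Because $\mathcal A_\eta$ maps $H^1(\R)$ boundedly into itself (it is an average of compositions with orthogonal maps, each an $H^1$-isometry, with signs), we get $\EEeta f\in H^1(\R)$, i.e. $f\in H^1_\eta(\R_{+,k})$, with $\|f\|_{H^1_\eta(\R_{+,k})}=\|\EEeta f\|_{H^1(\R)}\lesssim\|F\|_{H^1(\R)}$. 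Taking the infimum over admissible $F$ yields $\|f\|_{H^1_\eta(\R_{+,k})}\lesssim\inf_F\|F\|_{H^1(\R)}$. The support hypothesis on $F$ is what makes the averaging well-behaved: on $\R_{+,k}$ one recovers $f$, while the pieces $\eta(g)F(g\cdot)$ for $g\ne e$ are controlled precisely because $F$ vanishes on the part of $\RR^{\eta,1}_+$ outside $\R_{+,k}$, so no spurious boundary jumps are created across the walls labeled by $-1$.

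\textbf{Step 3: $(1)\Rightarrow(2)$, the main obstacle.} This is the heart of the theorem. Starting from $f\in H^1_\eta(\R_{+,k})$, i.e. $F_0:=\EEeta f\in H^1(\R)$, the strategy is to recover $f=F_0|_{\R_{+,k}}$ by successively restricting to the half-spaces $\langle e_{d-k+i}\rangle^\bot_+$, $i=1,\dots,k$, peeling off one coordinate at a time. First use the $\eta$-symmetry of $F_0$: in each coordinate $x_{d-k+i}$, $F_0$ is either even (if $\eta_i=0$) or odd (if $\eta_i=1$) under $\s_{e_{d-k+i}}$. Handle the even directions first. For an even direction, Lemma~\ref{lm:1}(1) lets us restrict $F_0\ind_{\langle e\rangle^\bot_+}$ staying in $H^1(\R)$ with atoms supported in $\langle e\rangle^\bot_+$; iterating over all $i$ with $\eta_i=0$ produces a function in $H^1(\R)$ supported in $\RR^{\eta,0}_+$, with a classical atomic decomposition there — these will be the building blocks that feed into Lemma~\ref{lm:2}. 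Then handle the odd directions: here one cannot simply multiply by $\ind_{\langle e\rangle^\bot_+}$ and stay in $H^1$ without care, so instead one invokes Lemma~\ref{lm:2} repeatedly, once for each $i$ with $\eta_i=1$, to pass from a decomposition into $(I_0,I_1,A/B)$-atoms to one into $(I_0,I_1\cup\{n\},A/B)$-atoms, with $\ell^1$ control of coefficients at each stage by \eqref{eq:5}. After exhausting all $k$ coordinates one arrives at a decomposition of $f=F_0|_{\R_{+,k}}$ into $(I_{\eta,0},I_{\eta,1},A/B)$-atoms $=(\eta,A/B)$-atoms with $\sum_i|\lambda_i|+|\mu_i|\lesssim\|F_0\|_{H^1(\R)}=\|f\|_{H^1_\eta(\R_{+,k})}$, giving (2) and the first half of \eqref{eq:4}. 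The delicate bookkeeping is the order of operations — even directions via Lemma~\ref{lm:1}, odd directions via Lemma~\ref{lm:2} — and checking that after restricting to all $\langle e\rangle^\bot_+$ the resulting function genuinely agrees with $f$ on $\R_{+,k}$ rather than with some other restriction of $F_0$; this is where the $\eta$-symmetry of $F_0$ is used crucially, since it guarantees $F_0\ind_{\R_{+,k}}$ (extended appropriately) and $f$ have the same content. Combining Steps 1--3 closes the cycle and, collecting the norm estimates, establishes all of \eqref{eq:4}.
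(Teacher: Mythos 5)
The cycle you propose, $(2)\Rightarrow(3)\Rightarrow(4)\Rightarrow(1)\Rightarrow(2)$, differs from the paper's route $(2)\Rightarrow(1)\Rightarrow(3)\Rightarrow(2)$, $(3)\Leftrightarrow(4)$, and your Step~3 (which concatenates the paper's $(1)\Rightarrow(3)$ via Lemma~\ref{lm:1}(1) with $(3)\Rightarrow(2)$ via Lemma~\ref{lm:2}) is sound. However, your Steps~1 and~2 each contain a genuine error.

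Step~1 claims $(2)\Rightarrow(3)$ by asserting that an $(\eta,B)$-atom $b$, extended by zero, is (up to a bounded factor) a classical $H^1(\R)$ atom, citing ``splitting off a small multiple of $\ind_{2Q}$.'' This cannot work: a $B$-atom has no cancellation, and neither does $\ind_{2Q}$; no subtraction of a multiple of $\ind_{2Q}$ produces two functions both in $H^1(\R)$. Concretely, $b=|Q|^{-1}\ind_Q$ is an $(\eta,B)$-atom for a suitable $Q$, and its extension by zero is a nonnegative nonzero integrable function, hence is not in $H^1(\R)$ at all. The required cancellation must be \emph{manufactured by reflection}: the paper applies $\calE^{\eta}$, and since $b$ is close to a wall with the $-$ sign, the odd reflection across that wall restores mean zero; this is why the paper proves $(2)\Rightarrow(1)$ rather than $(2)\Rightarrow(3)$.

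Step~2 claims $(4)\Rightarrow(1)$ via the identity $\EEeta f=|W|\A F$. This identity is false for a general $F$ admissible in (3) or (4). Take $d=k=2$ and $\eta=(0,1)$, so $\R_{+,k}$ is the first quadrant, $\RR^{\eta,1}_+=\{x_2>0\}$, and the support condition in (4) excludes only the open second quadrant. For $x$ in the first quadrant,
\[
|W|\A F(x)=F(x)+\eta(\s_1)F(\s_1x)+\eta(\s_2)F(\s_2x)+\eta(\s_1\s_2)F(\s_1\s_2x)
= f(x)-F(\s_2x)-F(\s_1\s_2x),
\]
and the last two terms are generally nonzero since $\s_2x$ and $\s_1\s_2x$ lie in the third and fourth quadrants, where $F$ need not vanish. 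So $|W|\A F$ does \emph{not} restrict to $f$ on $C_+$, and the conclusion $\EEeta f\in H^1(\R)$ does not follow from boundedness of $\A$ on $H^1$. The paper's argument for $(4)\Rightarrow(3)$ instead forms successive \emph{odd extensions} via Lemma~\ref{lm:1}(2) in each coordinate with $\eta_i=1$, and one checks directly that the resulting function equals $(\calE^\eta f)\ind_{\RR^{\eta,0}_+}$ — that verification uses the support constraint in an essential way that plain averaging does not. With these two gaps, your cycle does not close; the correct repairs amount to replacing your Steps~1 and~2 by the paper's $(2)\Rightarrow(1)$ and $(4)\Rightarrow(3)$ arguments.
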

\begin{proof} We fix $f\in L^1(\R_{+,k})$  throughout. Recall that the walls of $\R_{+,k}$ are $\mathcal W_i:=\langle e_{d-k+i} \rangle^\bot$, $i=1,\ldots,k$, 
and ${\rm sign}_\eta(\mathcal W_i)=(-1)^{\eta_i}$. 

\textbf{(2)$\implies$(1)} Assume $f$ admits a decomposition as in (2). To check that $f\in H^1_{\eta}(\R_{+,k})$ with 
$\Vert f\Vert_{H^1_{\eta}(\R_{+,k})}\lesssim \sum_{i\in\N} \big|\lambda_i\big|+\big|\mu_i\big|$ uniformly on $f$, it suffices to verify that 
$\calE^{\eta} f\in H^1(\RR^d)$ and $\Vert \calE^\eta f\Vert_{H^1(\R)}\lesssim \sum_{i\in\N} \big|\lambda_i\big|+\big|\mu_i\big|$, again uniformly on $f$. By linearity and boundedness of $\calE^{\eta}$ from $L^1(\R_{+,k})$ to $L^1(\R)$, we have
$$
\calE^{\eta}f=\sum_{i\in\N} \lambda_i \calE^{\eta}a_i + \mu_i \calE^{\eta} b_i,
$$
so we are reduced to consider the action of $\calE^{\eta}$ on $(\eta,A/B)$-atoms and to check that $\Vert \calE^{\eta}a \Vert_{H^1(\RR^d)}\lesssim 1$ 
for $(\eta,A)$-atoms, and analogously for $(\eta,B)$-atoms.  

If $a$ is an $(\eta,A)$-atom, then checking the above claim is immediate since $\calE^\eta a$ is a sum of $2^{k}$ atoms and $\Vert \calE^{\eta}a \Vert_{H^1(\RR^d)}\lesssim 1$ follows. Now, let $b$ be an  $(\eta,B)$-atom supported in a cube $Q\subset \R_{+,k}$ (so, from now on we assume $\eta\neq\textbf{0}$). 
We will show that $\calE^\eta b$ is a sum of a number of scaled atoms with scaling constant depending only on the dimension $d$ and hence 
$\Vert \calE^{\eta}b \Vert_{H^1(\RR^d)}\lesssim 1$ will follow. This will be done by representing the extension operator $\calE^\eta$ as a composition 
of two extension operators related to contexts of two complementary subroot systems of $R_k$. 

By the definition of $(\eta,B)$-atom we have $4Q\cap\mathcal W_j\neq\emptyset$ for some $j\in\{1,\ldots,k\}$ with $\eta_j=1$. Denote by $J$ the set of all 
such $j$'s. Let $R_{J}=\{\pm e_{d-k+j}\colon j\in J\}$ with analogous definition of $R_{J^c}$, where $J^c$ stands for the complement of $J$ in $\{1,\ldots,k\}$; 
$R_{J}$ and $R_{J^c}$ are complementary root (sub)systems of $R_k$, mutually orthogonal. Then $W(R_{J})\simeq\Pi_{j\in J}\widehat{\mathbb Z}_2$ and
similarly for $W(R_{J^c})$, and $W(R_k)$ naturally identifies with the direct product $W(R_{J})\oplus W(R_{J^c})$. As a positive Weyl chamber associated 
with $R_{J}$ 
we can choose $\R_{+,J}=\mathbb{R}^{d-k}\times \Pi_{j\in J} Y_j$, where $Y_j=(0,\infty)$ for $j\in J$ and $Y_j=\RR$ otherwise, and similarly for $\R_{+,J^c}$, 
a positive Weyl chamber associated with $R_{J^c}$. Clearly,  $\R_{+,k}\subset \R_{+,J}$, the walls of $\R_{+,J}$ are $\mathcal W_j$, $j\in J$, and 
all of them are labeled by the `$-$' sign. Also $\eta$ can be viewed as the 'tensor product' of the corresponding homomorphisms on $W(R_{J})$ and $W(R_{J^c})$,
namely $\eta=\textbf{1}_J\otimes\eta^c$, where $\textbf{1}_J=(1,\ldots,1)$ ($|J|$ times) and $\eta^c$  is obtained from $\eta$ by removing all the entries of 
$\eta$ on $j$'s places, $j\in J$. Due to the product structure of $W(R_k)$ and the tensor product structure of $\eta$ it is easily seen that 
$\calE^{\eta}f=\calE^{\eta^c}(\calE^{\textbf{1}_J}f)$ for any $f$ supported in $\R_{+,k}$; notice that if $f$ is such a function, then $\calE^{\textbf{1}_J} f$ 
is supported  in $\R_{+,J^c}$ so that the composition makes sense.

Let $b_J:=\calE^{\textbf{1}_J}b=\sum_{g\in W(R_{J})} \textbf{1}_J(g) b\circ g$. Clearly, $b_J$ has mean-value zero. Denote $Q_g=g(Q)$ for $g\in W(R_{J})$ and 
choose a smallest cube $Q_J$ (in the sense of sidelenght) such that it contains each $Q_g$. Notice that the volumes of $Q$ and $Q_J$ are comparable. It 
follows that $b_J$ is a scaled atom supported in $Q_J$, where the scaling constant depends only on the dimension $d$, and so $\|b_J \|_{H^1(\RR^d)}\lesssim 1$. Finally, $\calE^\eta b=\calE^{\eta^c} b_J$ is a sum of $2^{k-|J|}$ scaled atoms and hence $\Vert \calE^{\eta}b \Vert_{H^1(\RR^d)}\lesssim 1$.

\textbf{(1)$\implies$(3)} If $f\in H^1_{\eta}(\R_{+,k})$, then $\calE^{\eta}f\in H^1(\R)$ and it is even in each $\langle e_{d-k+i}\rangle^\bot$ such that 
$\eta_i=0$. Hence, applying Lemma \ref{lm:1} (1) consecutively to each such $e_{d-k+i}$ (if any) yields $\big(\calE^{\eta}f\big)\ind_{\RR^{\eta,0}_+}\in H^1(\R)$, 
and this function has the required properties. Notably, we have
	$$
	\big\Vert\big(\calE^{\eta}f\big)\ind_{\RR^{\eta,0}_+} \big\Vert_{H^1(\R)}\lesssim \Vert \calE^{\eta}f \Vert_{H^1(\R)}=\Vert f \Vert_{H^1_\eta(\R_{+,k})}.
	$$ 
	
	\textbf{(3)$\implies$(2)} Let $F\in  H^1(\RR^d)$ be as in (3). In particular, since $F$ is supported in $\RR^{\eta,0}_+$ and the latter region is a special 
Lipschitz domain, $F$ admits a (classical) atomic decomposition whose atoms are supported in $\RR^{\eta,0}_+$; see \cite[Theorem 3.3]{CKS} or the comments 
following  \cite[Theorem 1]{AR}-arXiv and referring to the setting of strongly Lipschitz domains. In the notation of Lemma \ref{lm:2} 
this means a decomposition of the form \eqref{eq:2} into ($I_{\eta,0}$,$\emptyset$,A)-atoms (with $\mu_i\equiv0$). Now, applying  consecutively Lemma \ref{lm:2} 
to each $n\in I_{\eta,1}$ (if any), we obtain an atomic decomposition of $F\ind_{\R_{+,k}}$ into $(\eta,A)$-atoms and $(\eta,B)$-atoms. Moreover, by \eqref{eq:5} we have the relevant inequality in \eqref{eq:4}. 

\textbf{(4)$\implies$(3)} Let $F\in  H^1(\RR^d)$ be as in (4). We assume that $\eta\neq\textbf{1}$. Let $1\le\eta_{i_1}<\ldots<\eta_{i_r}\le d$, 
$1\le r\le d$, be the all entries of $\eta$ which are equal to 1. We apply Lemma \ref{lm:1} (2) to $e_{i_1}$ and $F$ to obtain  $F_1\in H^1(\R)$ 
being the odd extension of $F|_{\langle e_{i_1}\rangle_+^\bot}$. Then we continue this process to $e_{i_2}$ and $F_1$ to get the odd extension of 
$F_1|_{\langle e_{i_2}\rangle_+^\bot}$, and so on. The final function $F_r$ belongs to $H^1(\R)$ and $\|F_r\|_{H^1(\R)}\lesssim\|F\|_{H^1(\R)}$. 
Moreover, by the assumptions imposed  on $F$ and the way $F_r$ was constructed, we have $F_r=(\calE^\eta f)\ind_{\RR^{\eta,0}_+}$. 
Notably, $F_r$ is supported in $\RR^{\eta,0}_+$ and $F_r|_{\R_{+,k}}=f$. 

\textbf{(3)$\implies$(4)} This implication is trivial.
	
Finally, notice that the equivalence \eqref{eq:4} follows by collecting  the partial conclusions obtained in the proofs of the four implications. 
\end{proof}

We now pass to discussion of the local Hardy space $h^1_\eta$. Recall that a function $a$ on $\R$ is a (classical) \textit{local} atom if it is supported 
in a cube $Q$ satisfying $\Vert a\Vert_{L^2}\leq |Q|^{-1/2}$ and either $l(Q)> 1$, or $l(Q)\le1$ and $\int_Q a=0$. Then $f\in h^1(\R)$ if and only if 
$f=\sum_i \lambda_i a_i$, where $a_i$ are local atoms and $\{\lambda_i\}\in\ell^1$; the norm $\|f\|_{h^1(\R)}$ is comparable with the infimum of 
$\sum_i |\lambda_i|$ taken over all such decompositions.

We now introduce the local analogue of $(\eta,A/B)$-atoms. Again, if $a$ is a local atom of either type, then $\Vert a\Vert_{L^1(\R)}\leq 1$; 
also the case $I_0=I_1=\emptyset$ is admitted.

\begin{definition}\label{def:2}
Let $I_0,I_1$ be two disjoint subsets of $\{1,\ldots,d\}$. Fix a function $a(x)$ supported in a cube $Q\subset\bigcap_{i\in I_0\cup I_1}\langle e_i\rangle^\bot_+$
and such that $\Vert a\Vert_{L^2}\leq |Q|^{-1/2}$. We say that $a$ is
\begin{itemize}
\item local $(I_0,I_1,A)$-atom if $l(Q)\le 1$, $4Q\subset \bigcap_{i\in I_1} \langle e_i \rangle^\bot_+$, and $\int_Q a=0$;
\item local $(I_0,I_1,B)$-atom if either: 1) $l(Q)>1$, or 2) $l(Q)\le1$, $2Q\subset \bigcap_{i\in I_1} \langle e_i \rangle^\bot_+$, and 
$4Q\not\subset \bigcap_{i\in I_1} \langle e_i \rangle^\bot_+$.
\end{itemize}
If $I_0=I_{\eta,0}$ and $I_1=I_{\eta,1}$ for some $\eta\in\Z_2^k$ (see the comments following Definition \ref{d1} where $I_{\eta,0}$, $I_{\eta,1}$ were defined), we refer to these atoms as to the local $(\eta,A/B)$-atoms.
\end{definition}

Notice that for $\eta={\bf 0}$ there exist local $(\textbf{0},B)$-atoms (those with `large supports'), unlike in the global case. In fact, we will avoid 
saying `large' or `small' supports in the future, because a function $a$ supported in a `small' cube $Q$, $l(Q)<1$, which is placed anywhere in $\RR_{+,k}^d$, 
is a local $(\eta,B)$-atom provided that $\Vert a\Vert_{L^2}\leq1$, since then it can be viewed as supported in a `large' cube. Moreover, mind that for such 
an $a$ and a set $E\subset\RR^d_{+,k}$ the restriction $a\ind_E$ (if non-trivial) is still a local $(\eta,B)$-atom.

The local version of Theorem \ref{thm:1} is the following.

\begin{theorem}\label{thm:1(loc)}
Let $f\in L^1(\R_{+,k})$ and $\eta\in\Z_2^k$. The following conditions are equivalent:
\begin{enumerate}
\item $f\in h^1_\eta(\R_{+,k})$;
\item $f=\sum_{i\in\N} \lambda_i a_i+\mu_i b_i$, where $a_i/b_i$ are local $(\eta,A/B)$-atoms, and $\{\lambda_i\},\{\mu_i\}\in\ell^1(\N)$;
\item there exists $F\in h^1(\RR^d)$, supported in $\RR^{\eta,0}_+$ and such that $F\big|_{\R_{+,k}}=f$;
\item there exists $F\in h^1(\RR^d)$, supported in $(\RR^{\eta,1}_+\setminus \R_{+,k})^c$ and  such that $F\big|_{\R_{+,k}}=f$.
\end{enumerate}
Moreover, the relevant quantities are comparable, namely
\begin{equation*}
\Vert  f\Vert_{h^1_\eta(\R_{+,k})} \simeq \inf \sum_{i\in\N}  |\lambda_i|+|\mu_i| \simeq \inf_F \Vert F\Vert_{h^1(\R)},
\end{equation*}
where the first infimum is taken over all decompositions as in (2) and the second over all $F$ as in (3) or (4).
\end{theorem}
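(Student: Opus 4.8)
The plan is to mirror the proof of Theorem \ref{thm:1} almost verbatim, replacing each ingredient by its local counterpart, and to flag only the places where the local structure genuinely changes the argument. As in the global case, I would prove the cycle of implications (2)$\implies$(1)$\implies$(3)$\implies$(2) together with (4)$\implies$(3)$\implies$(4), since (3)$\implies$(4) is trivial, and then collect the norm equivalences from the partial estimates. Throughout, the role of Lemma \ref{lm:1} is played by its local version (Remark \ref{rm:1}), the role of Lemma \ref{lm:2} by its local version (Remark \ref{rm:2}), and the role of the classical atomic decomposition of $H^1(\R)$ by the classical local atomic decomposition of $h^1(\R)$ recalled just before Definition \ref{def:2}; the atomic decomposition of $h^1$ on a strongly Lipschitz domain with atoms supported in the domain, needed in (3)$\implies$(2), is available from \cite{CKS} and the comments following \cite{AR}-arXiv.

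For \textbf{(2)$\implies$(1)} I would, exactly as before, reduce via linearity and $L^1$-boundedness of $\calE^\eta$ to showing $\|\calE^\eta a\|_{h^1(\R)}\lesssim 1$ for local $(\eta,A)$-atoms and $\|\calE^\eta b\|_{h^1(\R)}\lesssim 1$ for local $(\eta,B)$-atoms. A local $(\eta,A)$-atom has $l(Q)\le1$, so its $\eta$-extension is a sum of $2^k$ local (in fact genuine) atoms and the bound is immediate. For a local $(\eta,B)$-atom $b$ supported in a cube $Q\subset\R_{+,k}$ there are two subcases: if $l(Q)>1$ the extension $\calE^\eta b$ is a sum of $2^k$ functions each supported in a cube of the same (large) sidelength with $L^2$ norm controlled by $|Q|^{-1/2}$, hence a sum of (scaled) local atoms; if $l(Q)\le1$ I would repeat the factorization $\calE^\eta b=\calE^{\eta^c}(\calE^{\mathbf 1_J}b)$ through the complementary subroot systems $R_J$, $R_{J^c}$ exactly as in the proof of Theorem \ref{thm:1}, noting that $b_J:=\calE^{\mathbf 1_J}b$ is supported in a cube $Q_J$ of sidelength $\le 2 l(Q)\le 2$ comparable in volume to $Q$, hence a scaled local atom, and then $\calE^\eta b=\calE^{\eta^c}b_J$ is a sum of $2^{k-|J|}$ scaled local atoms. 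The implication \textbf{(1)$\implies$(3)} is identical to the global one: $\calE^\eta f$ is even in each $\langle e_{d-k+i}\rangle^\bot$ with $\eta_i=0$, so applying the local Lemma \ref{lm:1}(1) consecutively produces $(\calE^\eta f)\ind_{\RR^{\eta,0}_+}\in h^1(\R)$ with control of norms. For \textbf{(3)$\implies$(2)} I would decompose $F\in h^1(\R)$ supported in the special Lipschitz domain $\RR^{\eta,0}_+$ into local $(I_{\eta,0},\emptyset,A)$- and $(I_{\eta,0},\emptyset,B)$-atoms supported in $\RR^{\eta,0}_+$ (the latter now genuinely present, with $l(Q)>1$), then apply the local Lemma \ref{lm:2} consecutively over $n\in I_{\eta,1}$ to land on local $(\eta,A/B)$-atoms with the right $\ell^1$ bound. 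For \textbf{(4)$\implies$(3)} I would again take the entries $\eta_{i_1}<\dots<\eta_{i_r}$ equal to $1$ and apply the local Lemma \ref{lm:1}(2) successively, building odd extensions $F_1,\dots,F_r\in h^1(\R)$ with $\|F_r\|_{h^1}\lesssim\|F\|_{h^1}$ and, by the support hypothesis on $F$, $F_r=(\calE^\eta f)\ind_{\RR^{\eta,0}_+}$, supported in $\RR^{\eta,0}_+$ with $F_r|_{\R_{+,k}}=f$; and \textbf{(3)$\implies$(4)} is trivial since $\RR^{\eta,0}_+\subset(\RR^{\eta,1}_+\setminus\R_{+,k})^c$.

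Finally, the norm comparison follows by chaining the partial inequalities: (2)$\implies$(1) gives $\|f\|_{h^1_\eta}\lesssim\inf\sum|\lambda_i|+|\mu_i|$; (1)$\implies$(3) and (3)$\implies$(4) give $\inf_F\|F\|_{h^1(\R)}\lesssim\|f\|_{h^1_\eta}$; (3)$\implies$(2) (via the local Lemma \ref{lm:2}) gives $\inf\sum|\lambda_i|+|\mu_i|\lesssim\inf_F\|F\|_{h^1(\R)}$, with the infimum over $F$ as in (3), and since (4)$\implies$(3) is norm-non-increasing in the relevant sense the two infima over $F$ (as in (3) or (4)) agree. I expect the main obstacle to be purely bookkeeping rather than conceptual: in the local setting a small-support function is \emph{automatically} a local $(\eta,B)$-atom (as noted after Definition \ref{def:2}), so one must be slightly careful in (2)$\implies$(1) and in the local Lemma \ref{lm:2} that restricting a local $(\eta,B)$-atom to a coordinate half-space, or splitting it along a Whitney cube, never inadvertently creates something that fails the cube-containment conditions — but this is exactly the point addressed in Remarks \ref{rm:1} and \ref{rm:2}, so one only needs to invoke them and spell out that the six-type splitting of $F\ind_{\langle e_n\rangle^\bot_+}$ goes through unchanged, with the large-support atoms handled trivially and the small-support ones handled by the Whitney argument of Lemma \ref{lm:2}.
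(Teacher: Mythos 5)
Your proposal is correct and follows essentially the same route as the paper: reduce to Theorem \ref{thm:1} and replace each ingredient by its local counterpart — Remark \ref{rm:1} for Lemma \ref{lm:1}, Remark \ref{rm:2} for Lemma \ref{lm:2}, the classical $h^1$ atomic decomposition for the $H^1$ one, and (in (3)$\implies$(2)) the decomposition of $h^1$ functions supported in a special Lipschitz domain into local atoms supported there (\cite[Theorem~3.10]{CKS}). The only substantive wrinkle you rightly flag — that in (2)$\implies$(1) a local $(\eta,B)$-atom with $l(Q)>1$ is handled trivially while one with $l(Q)\le1$ goes through the same $\calE^{\eta^c}\circ\calE^{\mathbf 1_J}$ factorization as in the global case — is exactly the point the paper singles out, and you treat it correctly.
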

\begin{proof}
The proof is very similar to the proof of Theorem \ref{thm:1}, that is why we only comment on the main differences; already at this point we note that the implication \textbf{(3)$\implies$(4)} is trivial.  We fix $f\in L^1(\R_{+,k})$ throughout.

\textbf{(2)$\implies$(1)}
It suffices to check that $\eta$-extensions of local atoms supported in cubes contained in $\R_{+,k}$ are in $h^1_\eta(\R_{+,k})$ with uniform norms. If $a$ 
is a local $(\eta,B)$-atom supported in a cube $Q\subset \R_{+,k}$ such that $l(Q)> 1$, then $\calE^\eta a$ is simply a sum of $2^k$ local $(\eta,B)$-atoms.

\textbf{(1)$\implies$(3)} For this implication we use a version of Lemma \ref{lm:1} (1) for $h^1(\R)$, see Remark \ref{rm:1}, and copy the reasoning from the 
proof of the analogous implication in  Theorem \ref{thm:1}.

\textbf{(3)$\implies$(2)} With a local version of Lemma \ref{lm:2}, see Remark \ref{rm:2}, we again  copy the reasoning from the 
proof of the analogous implication in  Theorem \ref{thm:1} using, this time, \cite[Theorem 3.10]{CKS}.

\textbf{(4)$\implies$(3)} We use a version of Lemma \ref{lm:1} (2) for $h^1(\R)$, see Remark \ref{rm:1}, and copy the reasoning from the 
proof of the analogous implication in  Theorem \ref{thm:1}.
\end{proof}

As a direct consequence of equivalences  $(1) \Longleftrightarrow (3)$ in Theorems \ref{thm:1} and \ref{thm:1(loc)} we obtain the following.
\begin{corollary} \label{rem:com2}
For the Weyl chamber $\R_{+,k}$ corresponding to the root system $R_k$ in $\R$, $1\le k\le d$, and for $\eta=\textbf{0}$ or $\eta=\textbf{1}$ we have 
$$
H^1_{\textbf{0}}(\R_{+,k})=H^1_{z}(\R_{+,k})\quad {\rm and}\quad H^1_{\textbf{1}}(\R_{+,k})=H^1_{r}(\R_{+,k}),
$$
with equivalence of norms. Consequently, $H^1_{\textbf{0}}(\R_{+,k})\hookrightarrow H^1_{\textbf{1}}(\R_{+,k})$ follows with  strict inclusion. 
Analogous claims hold for the local Hardy spaces. 
\end{corollary}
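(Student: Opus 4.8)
The plan is to read off both identities from the equivalence $(1)\Leftrightarrow(3)$ in Theorems \ref{thm:1} and \ref{thm:1(loc)}, after unwinding what the region $\RR^{\eta,0}_+$ becomes for the two distinguished homomorphisms. Recall from Section \ref{sec:at} that $\RR^{{\bf 0},0}_+=\R_{+,k}$ while $\RR^{{\bf 1},0}_+=\R$ (the defining set of indices being empty in the latter case). For $\eta={\bf 0}$, condition (3) of Theorem \ref{thm:1} says $f\in H^1_{\bf 0}(\R_{+,k})$ if and only if there is $F\in H^1(\R)$ supported in $\RR^{{\bf 0},0}_+=\R_{+,k}$ with $F|_{\R_{+,k}}=f$; such an $F$ is precisely the extension of $f$ by zero outside $\R_{+,k}$, so this is exactly the defining condition of $H^1_z(\R_{+,k})$, and the norm equivalence in \eqref{eq:4} gives $\|f\|_{H^1_{\bf 0}(\R_{+,k})}\simeq\inf_F\|F\|_{H^1(\R)}=\|f\|_{H^1_z(\R_{+,k})}$ (the infimum is attained at the zero extension, which is the only competitor). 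For $\eta={\bf 1}$, condition (3) requires $F\in H^1(\R)$ supported in $\RR^{{\bf 1},0}_+=\R$, i.e.\ no support restriction at all, with $F|_{\R_{+,k}}=f$; this is verbatim the definition of the restriction space $H^1_r(\R_{+,k})$, and again \eqref{eq:4} yields the norm equivalence.

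The local statements are obtained identically from the equivalence $(1)\Leftrightarrow(3)$ in Theorem \ref{thm:1(loc)}, replacing $H^1$ by $h^1$ throughout: $h^1_{\bf 0}(\R_{+,k})=h^1_z(\R_{+,k})$ and $h^1_{\bf 1}(\R_{+,k})=h^1_r(\R_{+,k})$ with equivalence of norms. For the continuous inclusion $H^1_{\bf 0}(\R_{+,k})\hookrightarrow H^1_{\bf 1}(\R_{+,k})$, observe that an $F\in H^1(\R)$ supported in $\R_{+,k}$ is in particular an admissible competitor for condition (3) with $\eta={\bf 1}$ (which imposes no support constraint), so $\|f\|_{H^1_{\bf 1}(\R_{+,k})}\lesssim\|f\|_{H^1_{\bf 0}(\R_{+,k})}$; equivalently, this is the inclusion $H^1_z(\R_{+,k})\hookrightarrow_1 H^1_r(\R_{+,k})$ already recorded in Section \ref{sub:Hp}. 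Strictness of the inclusion is also noted there: every $f\in H^1_z(\R_{+,k})$ satisfies $\int_{\R_{+,k}}f=0$ (since its zero-extension lies in $H^1(\R)$ and hence has vanishing integral), whereas a generic element of $H^1_r(\R_{+,k})$ need not have mean zero; an explicit $f\in H^1_r(\R_{+,k})\setminus H^1_z(\R_{+,k})$ is the restriction to $\R_{+,k}$ of any classical atom in $\R$ whose supporting cube meets $\R_{+,k}$ but is not contained in it, since such a restriction generally has nonzero integral. The same mean-zero obstruction separates $h^1_z$ from $h^1_r$ — here one invokes \cite[Proposition 6.4]{CDS} as cited in Section \ref{sub:Hp} — giving strictness in the local case as well.

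There is essentially no obstacle here: the corollary is a direct bookkeeping consequence of the atomic/extension characterizations already proved, and the only point requiring a line of care is matching the definitions of $H^1_z$ and $H^1_r$ (and their norms) with conditions (3) for $\eta={\bf 0}$ and $\eta={\bf 1}$ respectively, together with the elementary observation that for $\eta={\bf 0}$ the zero-extension is the \emph{unique} $F$ satisfying the support constraint, so the infimum in \eqref{eq:4} is not merely comparable to but literally equals $\|F\|_{H^1(\R)}$ for that $F$.
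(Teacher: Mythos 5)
Your proof is correct and follows the same route the paper takes: reading the two identities directly from the equivalence $(1)\Leftrightarrow(3)$ in Theorems \ref{thm:1} and \ref{thm:1(loc)} after unwinding $\RR^{{\bf 0},0}_+=\R_{+,k}$ and $\RR^{{\bf 1},0}_+=\R$, and obtaining the embedding and its strictness from the remarks in Section \ref{sub:Hp}. One small caution: in the local case the phrase ``the same mean-zero obstruction separates $h^1_z$ from $h^1_r$'' is misleading, since elements of $h^1(\R)$ need not have vanishing integral, so the zero-extension of an $h^1_z$-function need not satisfy $\int_{\R_{+,k}}f=0$; that argument genuinely fails for $h^1$, which is precisely why the paper cites \cite[Proposition 6.4]{CDS} for the local strictness rather than repeating the moment argument. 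You do cite the right reference, so the conclusion stands — just drop the suggestion that the mean-zero mechanism carries over.
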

In the case of $k=1$, i.e. for the root system $R_1=\{-e_d,e_d\}$ and the half-space $\R_+$, the above result for global Hardy spaces was well known 
(see, e.g. \cite[Corollaries 1.6 and 1.8]{CKS} taken for $p=1$).  

It is also worth mentioning that alternatively, the proof can be conducted by using results of \cite{AR}-arXiv. Indeed, \cite[Theorem 1 (b2)]{AR}-arXiv applied to $\Omega=\R_{+,k}$ we have $H^1_{z,a}(\R_{+,k})=H^1_{z}(\R_{+,k})$ (see \cite{AR}-arXiv for definition of $H^1_{z,a}(\R_{+,k})$). 
Hence, by the equivalence  $(1) \Longleftrightarrow (2)$ in Theorem \ref{thm:1} applied for $\eta=\textbf{0}$, $H^1_{z,a}(\R_{+,k})=H^1_{\textbf{0}}(\R_{+,k})$ 
and the first of the two claimed equalities follows. Similarly, by \cite[Theorem 1 (a2)]{AR}-arXiv, one obtains $H^1_{r,a}(\R_{+,k})=H^1_{r}(\R_{+,k})$ 
(see \cite{AR}-arXiv for definition of $H^1_{r,a}(\R_{+,k})$). Hence, by the equivalence  $(1) \Longleftrightarrow (2)$ in Theorem \ref{thm:1} applied 
for $\eta=\textbf{1}$, $H^1_{r,a}(\R_{+,k})=H^1_{\textbf{1}}(\R_{+,k})$ and the second of the two claimed equalities follows.
For the local Hardy spaces we have 
$$
h^1_{\textbf{0}}(\R_{+,k})=h^1_{z}(\R_{+,k})\quad {\rm and}\quad h^1_{\textbf{1}}(\R_+)=h^1_{r}(\R_{+,k}),
$$
with equivalence of norms, and the proof of these equalities goes analogously. This time we use \cite[Theorem 18 (b2)]{AR}-arXiv and our Theorem 
\ref{thm:1(loc)} to achieve the first equality, and \cite[Theorem 18 (a)]{AR}-arXiv and Theorem \ref{thm:1(loc)} to obtain the second one.

The proposition concluding this section  establishes relations between the $\eta$-Hardy spaces for different $\eta$'s (cf. also Corollary \ref{rem:com2}).
\begin{proposition} \label{prop:rel}
If $\eta^{(1)}\le\eta^{(2)}$ (the lexicographical order), then 
$H^1_{\eta^{(1)}}(\R_{+,k})\hookrightarrow H^1_{\eta^{(2)}}(\R_{+,k})$. In particular, we have 
$$
H^1_{\textbf{0}}(\R_{+,k})\hookrightarrow H^1_\eta(\R_{+,k})\hookrightarrow H^1_{\textbf{1}}(\R_{+,k}),
$$
with  the relevant inclusions proper when $\eta\neq\textbf{0}$ or $\eta\neq\textbf{1}$, respectively.
Analogous embeddings and statements hold for the local spaces $h^1_{\eta}(\R_{+,k})$. 
\end{proposition}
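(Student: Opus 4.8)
The plan is to reduce everything to the characterization of $H^1_\eta(\R_{+,k})$ by atomic decomposition, specifically the equivalence $(1)\Longleftrightarrow(3)$ in Theorem \ref{thm:1}, which says that $f\in H^1_\eta(\R_{+,k})$ precisely when $f$ extends to some $F\in H^1(\R)$ supported in $\RR^{\eta,0}_+$, with $\|f\|_{H^1_\eta}\simeq\inf_F\|F\|_{H^1(\R)}$. First I would observe that the hypothesis $\eta^{(1)}\le\eta^{(2)}$ in the lexicographic order on $\Z_2^k$ is not quite the right structural condition; what is actually used is the coordinatewise (partial) order $\eta^{(1)}_i\le\eta^{(2)}_i$ for all $i$. (Since $\eta\ne{\bf 1}$ or $\eta\ne{\bf 0}$ holds vacuously for the extremes, and since the chain $\textbf{0}\le\eta\le\textbf{1}$ is the assertion actually needed for the ``in particular'' part, I would phrase the monotonicity using the coordinatewise order, under which $\textbf{0}$ is the bottom and $\textbf{1}$ the top element.) Under coordinatewise order, $\eta^{(1)}\le\eta^{(2)}$ means $\{i:\eta^{(1)}_i=1\}\subset\{i:\eta^{(2)}_i=1\}$, hence $\{i:\eta^{(2)}_i=0\}\subset\{i:\eta^{(1)}_i=0\}$, which gives the key containment $\RR^{\eta^{(1)},0}_+\subset\RR^{\eta^{(2)},0}_+$ of the extension regions.

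The embedding then follows immediately: if $f\in H^1_{\eta^{(1)}}(\R_{+,k})$, take $F\in H^1(\R)$ supported in $\RR^{\eta^{(1)},0}_+$ with $F|_{\R_{+,k}}=f$ and $\|F\|_{H^1(\R)}\lesssim\|f\|_{H^1_{\eta^{(1)}}}$; since $F$ is then also supported in $\RR^{\eta^{(2)},0}_+$, the same $F$ witnesses, via $(3)\Longrightarrow(1)$ of Theorem \ref{thm:1} for $\eta^{(2)}$, that $f\in H^1_{\eta^{(2)}}(\R_{+,k})$ with $\|f\|_{H^1_{\eta^{(2)}}}\lesssim\|F\|_{H^1(\R)}\lesssim\|f\|_{H^1_{\eta^{(1)}}}$. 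The chain $H^1_{\textbf{0}}\hookrightarrow H^1_\eta\hookrightarrow H^1_{\textbf{1}}$ is the special case with the extremes, and for the local spaces the identical argument runs with Theorem \ref{thm:1(loc)} in place of Theorem \ref{thm:1}.

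The remaining, and genuinely more delicate, point is the \emph{properness} of the inclusions when $\eta\ne\textbf{0}$ (resp. $\eta\ne\textbf{1}$). For $H^1_{\textbf{0}}\subsetneq H^1_\eta$ I would exploit the vanishing-moment obstruction: every $F\in H^1(\R)$ has $\int_\R F=0$, so every $f\in H^1_{\textbf{0}}(\R_{+,k})=H^1_z(\R_{+,k})$ satisfies $\int_{\R_{+,k}}f=0$ (as already noted in Section \ref{sub:Hp}); but an $(\eta,B)$-atom for $\eta\ne\textbf{0}$ need not have vanishing integral over $\R_{+,k}$, so it lies in $H^1_\eta\setminus H^1_{\textbf{0}}$ — I would just exhibit a single explicit $(\eta,B)$-atom (say a normalized indicator of a small cube straddling, in its dilate, a ``$-$''-wall) with $\int a\ne0$. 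For $H^1_\eta\subsetneq H^1_{\textbf{1}}$ when $\eta\ne\textbf{1}$, pick an index $i_0$ with $\eta_{i_0}=0$; the region $\RR^{\eta,0}_+$ then imposes the constraint $x_{d-k+i_0}>0$, so membership in $H^1_\eta$ forces (again via $(1)\Longleftrightarrow(3)$) an extension supported in a half-space, whereas a generic $f\in H^1_{\textbf{1}}(\R_{+,k})=H^1_r(\R_{+,k})$ (e.g. a classical $H^1(\R)$-atom restricted to $\R_{+,k}$ whose extension genuinely crosses $\langle e_{d-k+i_0}\rangle^\bot$) admits no such one-sided extension. The cleanest way to package the latter separation is probably to invoke the already-stated identifications $H^1_{\textbf{1}}=H^1_r$ and $H^1_{\textbf{0}}=H^1_z$ (Corollary \ref{rem:com2}) together with the known strictness $H^1_z\subsetneq H^1_r$, and to handle intermediate $\eta$ by the same half-space/moment dichotomy applied on the coordinates where $\eta$ differs from the relevant extreme; the main obstacle is thus not the embedding itself, which is a one-line consequence of Theorem \ref{thm:1}, but assembling convincing explicit witnesses for the strictness in the intermediate cases, and these can be borrowed almost verbatim from the $k=1$ examples in \cite{CKS} combined with the product structure used in the proof of Theorem \ref{thm:1}.
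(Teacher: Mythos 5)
Your embedding argument (via the characterization $(1)\Longleftrightarrow(3)$ of Theorem~\ref{thm:1} together with the containment of extension regions $\RR^{\eta^{(1)},0}_+\subset\RR^{\eta^{(2)},0}_+$) is precisely the paper's, and your observation that the hypothesis should be the coordinatewise partial order on $\Z_2^k$ rather than the lexicographical order is a genuine correction of the statement: taking $\eta^{(1)}=(0,1)$ and $\eta^{(2)}=(1,0)$ in $\Z_2^2$, one has $\eta^{(1)}<\eta^{(2)}$ lexicographically but $\RR^{\eta^{(1)},0}_+=\{x_{d-1}>0\}$ and $\RR^{\eta^{(2)},0}_+=\{x_d>0\}$ are incomparable, and indeed the two Hardy spaces are not nested. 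The first strictness $H^1_{\textbf{0}}(\R_{+,k})\subsetneq H^1_\eta(\R_{+,k})$ via the vanishing-moment obstruction and a $(\eta,B)$-atom with nonzero integral also agrees with the paper.

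The gap is in the second strictness, $H^1_\eta(\R_{+,k})\subsetneq H^1_{\textbf{1}}(\R_{+,k})$ for $\eta\ne\textbf{1}$. To show a given $f\in H^1_r(\R_{+,k})$ is outside $H^1_\eta(\R_{+,k})$ you must prove that \emph{no} $H^1(\R)$ extension of $f$ is supported in $\RR^{\eta,0}_+$, not merely that one natural extension straddles $\langle e_{d-k+i_0}\rangle^\bot$; the existence quantifier in condition (3) is what makes this delicate, and your candidate witness does not rule out a different one-sided extension. The moment argument that works for $H^1_z$ is also unavailable here when $\eta\ne\textbf{0}$: in that case $\RR^{\eta,0}_+$ strictly contains $\R_{+,k}$, so an extension supported in $\RR^{\eta,0}_+$ may be nonzero outside $\R_{+,k}$ and compensate the integral of $f$ over $\R_{+,k}$. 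The fallback appeal to ``witnesses borrowed from \cite{CKS} combined with the product structure'' is not a proof, since Hardy spaces do not factor over products. The paper sidesteps all of this by duality: assume $H^1_\eta=H^1_{\textbf{1}}$ as sets, apply the Inverse Mapping Theorem to get equivalent norms, conclude from Theorem~\ref{thm:3} that $\BMO_\eta(\R_{+,k})$ and $\BMO_{\textbf{1}}(\R_{+,k})$ coincide, and contradict Proposition~\ref{prop:diff}; the same scheme, with Theorem~\ref{thm:3(loc)}, settles the local case, where the paper uses it even for the first strictness. Replacing your half-space heuristic with this duality argument closes the gap.
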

\begin{proof}
For $\eta^{(1)}\le\eta^{(2)}$ we have $\RR^{\eta^{(1)},0}_+\subset \RR^{\eta^{(2)},0}_+$. Hence the claimed continuous embedding is an immediate 
consequence of the equivalence $(1)\Leftrightarrow(3)$ in Theorem \ref{thm:1} and \eqref{eq:4}. For the local case we repeat the same argument using 
Theorem \ref{thm:1(loc)}. 

When it comes to proper inclusions this is obvious for the pair \textbf{0} and $\eta\neq\textbf{0}$ (then $H^1_\eta(\R_{+,k})$ contains functions with 
nonvanishing integral over $\R_{+,k}$). Assume \textit{a contrario}, that the spaces coincide for the pair $\eta\neq\textbf{1}$ and \textbf{1}. Then, by 
the Inverse Mapping Theorem, the corresponding norms are equivalent and this implies isomorphism (in the category of Banach spaces) of the dual spaces. 
But this is not the case, see Proposition \ref{prop:diff} and Theorem \ref{thm:3}. In the local case we repeat the latter argument for both pairs, 
\textbf{0} and $\eta\neq\textbf{0}$, and $\eta\neq\textbf{1}$ and \textbf{1}, and apply Proposition \ref{prop:diff} and Theorem \ref{thm:3(loc)}.
\end{proof}

In our final remark we point out that distinguishing $\eta$-Hardy spaces (local or global, respectively) on the base of the differences of the families of 
$(\eta,A/B)$-atoms (local or global) fails. 
To see this, given $\eta\in\Z^k_2$ we write $\mathcal A_\eta$ and $\mathcal B_\eta$ for the families of (global) $(\eta,A)$- and $(\eta,B)$-atoms, 
respectively (recall that $\mathcal B_{\textbf{0}}=\emptyset$). Similar notations obeys in the local case, we write $\mathcal A_\eta^{\rm loc}$ and 
$\mathcal B_\eta^{\rm loc}$. Although for $\eta\neq\eta'$ we have $\mathcal A_\eta\neq\mathcal A_{\eta'}$ and $\mathcal B_\eta\neq\mathcal B_{\eta'}$, 
and analogously in the local case, this is not sufficient for claiming that the $\eta$-Hardy spaces differ for different $\eta$'s. Indeed, for every 
$\eta,\eta'$ we have $\mathcal A_\eta\subset H^1_{\eta'}(\R_{+,k})$, and, when $\eta'\neq\textbf{0}$, $\mathcal B_\eta\subset H^1_{\eta'}(\R_{+,k})$.
This is because for any (classical) atom $a$ supported in $\R_{+,k}$, $\mathcal E^{\eta'}a$ is a scaled atom for any $\eta'$, and 
$\mathcal E^{\eta'}a$ is a scaled atom when $\eta'\neq0$ and $a\in L^2$ is supported in a cube $Q\subset \R_{+,k}$. In the local case we have 
$\mathcal A_\eta^{\rm loc}\cup \mathcal B_\eta^{\rm loc}\subset h^1_{\eta'}(\R_{+,k})$ for every $\eta,\eta'$ (in case $\eta'=\textbf{0}$, 
for $a\in \mathcal B_\eta$, $\mathcal E^{\eta'}a$ is a scaled local $(\eta',A)$-atom and hence in $h^1_{\eta'}(\R_{+,k})$).

\section{$\eta$-$\BMO$ spaces on  Weyl chambers and duality}\label{sec:BMO} 

In the first part of this section, similarly as in Section \ref{sec:semi}, let $R$, $W$, $R_+$, $C_+$, and $\Hom$ be fixed. 
Recall that $F\in L^1_{\rm loc}(\R)$ belongs to ${\rm bmo}(\R)$ if
$$
\|F\|_{{\rm bmo}(\R)}:=\sup_{l(Q)<1}\frac1{|Q|}\int_Q|F-F_Q|+\sup_{l(Q)\ge1}\frac1{|Q|}\int_Q|F|<\infty,
$$
where $F_Q$ stands for the mean value of $F$ over $Q$. Analogously, $F\in L^1_{\rm loc}(\R)$ is in $\BMO(\R)$ provided 
$$
\|F\|_{{\BMO}(\R)}:=\sup_{Q}\frac1{|Q|}\int_Q|F-F_Q|<\infty;
$$
here we identify functions modulo constants. It is well known that the norm $\|\cdot \|_{{\BMO}(\R)}$ is equivalent with $\|\cdot \|_{*, \BMO(\R)}$, where
$$
\|F\|_{*, \BMO(\R)}:=\sup_{Q}\inf_{c\in \C}\frac1{|Q|}\int_Q|F-c|.
$$
More precisely, $\|\cdot\|_{*, \BMO(\R)}\le \|\cdot\|_{\BMO(\R)}\le 2\|\cdot\|_{*, \BMO(\R)}$. Similarly, for $F\in {\rm bmo}(\R)$,
$$
\|F\|_{*, {\rm bmo}(\R)}\le \|F\|_{{\rm bmo}(\R)}\le 2\|F\|_{*, {\rm bmo}(\R)},
$$
where 
$$
\|F\|_{*, {\rm bmo}(\R)}:=\sup_{l(Q)<1}\inf_{c\in \C}\frac1{|Q|}\int_Q|F-c|+\sup_{l(Q)\ge1}\frac1{|Q|}\int_Q|F|.
$$

Following the procedure described in Section \ref{sub:pro} we define 
$$
{\rm bmo}_\eta(C_+)=\{f\in L^1_{\rm loc}(C_+)\colon \EEeta f\in {\rm bmo}(\R)\},
$$
with norm $\|f\|_{{\rm bmo}_\eta(C_+)}=\|\EEeta f\|_{{\rm bmo}(\R)}$. Analogously, 
$$
\BMO_\eta(C_+)=\{f\in L^1_{\rm loc}(C_+)\colon \EEeta f\in \BMO(\R)\},
$$
with norm $\|f\|_{\BMO_\eta(C_+)}=\|\EEeta f\|_{\BMO(\R)}$, where for  $\eta={\rm triv}$ we identify functions on $C_+$ modulo constants, and for 
$\eta\neq {\rm triv}$ elements of $\BMO_\eta(C_+)$ are genuine functions, not equivalence classes modulo constants. 

The above dichotomy is also reflected in the definition of the extension by zero of global $\BMO$ functions on open subdomains of $\R$. Namely, 
specified to $C_+$ as an open subdomain, $\BMO_z(C_+)$ is identified with the space of these (genuine) functions on $C_+$ such that $f$ extended by zero 
outside $C_+$, and denoted by the same symbol, is in $\BMO(\R)$, with inherited norm  $\|f\|_{\BMO_z(C_+)}=\|f\|_{\BMO(\R)}$; see e.g. \cite[Section 2.3]{AR}-arXiv. Unlikely, the concept of definition of $\BMO_r(C_+)$ is unchanged: $\BMO_r(C_+)$ (modulo constants) consists of restrictions to $C_+$ of $\BMO$ 
functions on $\R$ with norm $\|f\|_{\BMO(C_+)}:=\inf_{F}\|F\|_{\BMO(\R)}$, where the infimum is taken over all $F$ such that $F|_{C_+}=f$. Obviously, 
$\BMO_z(C_+)\subset \BMO_r(C_+)$ with continuous embedding in the sense that $[F]\in \BMO_r(\R)$ for $F\in\BMO_z(\R)$ and 
$\|[F]\|_{\BMO(\R)}\lesssim \|F\|_{\BMO(\R)}$. Similar convention is used in analogous situations when the space to which embedding is made consists of abstract classes. Moreover, keeping this convention in mind we shall use the symbols $\hookrightarrow$ and $\hookrightarrow_1$ in their previous sense. 

Such a dichotomy in definitions does not occur in case of local BMO spaces. We define ${\rm bmo}_z(C_+)$ and ${\rm bmo}_r(C_+)$ as the spaces of (genuine) 
functions on $C_+$ by the same principles as above, including the definitions of norms. It is then obvious that ${\rm bmo}_z(C_+)\hookrightarrow_1{\rm bmo}_r(C_+)$. 

It is worth pointing out that in particular cases of a root system $R$ and a homomorphism $\eta$ these spaces appeared in the literature. 
For example, for the half-space  $\R_+$ as a Weyl chamber corresponding to the root system $R_1=\{-e_d,e_d\}$ and $\eta=\rm triv=\textbf{0}$
or $\eta={\rm sgn}=\textbf{1}$, the corresponding spaces were denoted  in \cite{DDSY}, as $\BMO_e(\R_+)$ and $\BMO_o(\R_+)$, respectively. 
(Unfortunately, in \cite{DDSY} it was not pointed out that $\BMO_o(\R_+)$ and $\BMO_z(\R_+)$ should be treated as spaces of genuine functions, 
not equivalence classes modulo constants.)

When it comes to a comparison of local and global BMO spaces then, clearly, ${\rm bmo}\,(\R)\hookrightarrow \BMO\,(\R)$ (we have $\hookrightarrow_1$ 
for the norms $\|\cdot\|_{*,{\rm bmo}}$ and $\|\cdot\|_{*,\BMO}$). Consequently, ${\rm bmo}_z(C_+)\hookrightarrow\BMO_z(C_+)$, 
${\rm bmo}_r(C_+)\hookrightarrow\BMO_r(C_+)$, and ${\rm bmo}_\eta(C_+)\hookrightarrow\BMO_\eta(C_+)$. 

The spaces ${\rm bmo}_\eta(C_+)$, and thus also $\BMO_\eta(C_+)$, are nontrivial in the sense that they contain unbounded functions. To see this, 
recall that $F(x)=\log\frac1{|x|}\in \BMO(\R)$. Moreover, also $\Psi(x)=\max\{\log\frac1{|x|},0\}\in \BMO(\R)$. Since in addition $\Psi\in L^1(\R)$, 
it follows that $\Psi\in {\rm bmo}(\R)$. Now, choose $x_0\in C_+$ such that ${\rm dist}(x_0,\partial C_+)>1$. Then $\psi(x)=\ind_{C_+}(x)\Psi(x-x_0)$, 
$x\in C_+$, is a function on $C_+$ such that $\calE^\eta\psi\in {\rm bmo}(\R)$ ($\calE^\eta\psi$ should be seen as a sum of ${\rm bmo}(\R)$ functions, 
which are reflections of $\psi$ along a number of hyperplanes) and hence $\psi\in {\rm bmo}_\eta(C_+)$. To illustrate this general discussion 
we present in dimension one an example of an odd unbounded function that belongs to ${\rm bmo}(\RR)$ and thus also to  $\BMO(\RR)$ (surprisingly, 
we could not find such an example in the literature):
$$
\Phi(x)={\rm sgn} \,x\cdot \ind_{[1,3]}(|x|)\cdot \log\frac1{\big||x|-2\big|}, \qquad x\in\RR.
$$

We now come to the discussion of the question if the $\eta$-$\BMO$ spaces are different for different $\eta$'s. In dimension one it is well known that for 
$F$ as above, $F_o(x):={\sgn\, x}\cdot F(x)\notin \BMO(\RR)$, hence $\BMO_o(\RR_+)$ and $\BMO_e(\RR_+)$ differ. Replacing $F_o$ by $\ind_{[-1,1]}F_o$ gives 
the same conclusion in the local case, ${\rm bmo}_o(\RR_+)\neq{\rm bmo}_e(\RR_+)$ (note, however, that not only $F_o\notin{\rm bmo}(\RR)$ but also 
$F_o\notin\BMO(\RR)$). The same idea works in higher dimensions. To be precise, for $\eta'\neq\eta=\textbf{0}$, $\BMO_{\eta'}(C_+)$ and $\BMO_{\textbf{0}}(C_+)$ 
will differ in the sense that for some $f$, $[f]\in \BMO_{\textbf{0}}(C_+)$ but $f\notin \BMO_{\eta'}(C_+)$.

\begin{proposition} \label{prop:diff}
The spaces $\BMO_\eta(C_+)$, $\Hom$, are pairwise different. The same claim holds for the spaces ${\rm bmo}_\eta(C_+)$.
\end{proposition}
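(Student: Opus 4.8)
The plan is to reduce the general statement to a one-dimensional computation along a single wall, using the product structure of the extension operator. Fix two distinct homomorphisms $\eta,\eta'\in{\rm Hom}(W,\widehat{\mathbb Z}_2)$. Since they differ, there is at least one simple root $\a\in\Sigma$ with $\eta(\s_\a)\neq\eta'(\s_\a)$; say $\eta(\s_\a)=1$ and $\eta'(\s_\a)=-1$ (the other case is symmetric, after swapping the roles of $\eta$ and $\eta'$). I would construct a function $f$ on $C_+$, supported in a small ball well away from the origin and from all the other walls, which near the wall $\mathcal W_\a$ looks like a reflection (across $\mathcal W_\a$) of the one-dimensional function $\Phi$ (equivalently $\ind_{[-1,1]}\Phi$ in the local case) exhibited just before the statement. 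Concretely: pick a point $y_0$ on the interior of the facet $\overline{C_+}\cap\langle\a\rangle^\bot$ at distance $>2$ from the origin and from every other wall, choose coordinates so that $\a$ points in, say, the $e_1$-direction near $y_0$, and set $f(x)=\ind_{C_+}(x)\,\Phi(\langle x-y_0,\a/|\a|\rangle)\,\chi(x'')$ where $\chi$ is a fixed smooth bump in the tangential variables $x''$ with support small enough that $\operatorname{supp} f$ meets only the wall $\mathcal W_\a$.

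The key point is then a localization/product argument. Because $\operatorname{supp} f$ touches only $\mathcal W_\a$ and is bounded away from all other walls, in a neighborhood of $\operatorname{supp}(\EEeta f)$ the only elements of $W$ contributing to $\EEeta f$ are $\{e,\s_\a\}$, so $\EEeta f$ locally equals $f-(f\circ\s_\a)$ reflected appropriately, i.e.\ locally the \emph{odd} (one-dimensional) extension of $\Phi$ times $\chi$; similarly $\mathcal E^{\eta'}f$ locally equals the \emph{even} extension. I would make this precise using a smooth partition of unity subordinate to a cover of $\R$ by the images $g(V)$, $g\in W$, of a small neighborhood $V$ of $\operatorname{supp} f$: on the support of each patch, $\BMO$ (resp.\ ${\rm bmo}$) membership of $\EEeta f$ is equivalent to $\BMO$ (resp.\ ${\rm bmo}$) membership of its one-variable trace, since multiplication by a fixed smooth compactly supported function and tensoring with a fixed smooth bump preserve both classes (this is standard; for $\BMO$ one uses that $\BMO$ is a module over $\mathcal C^\infty_c$ and the John--Nirenberg characterization). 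The one-dimensional facts already recorded in the text — that the \emph{even} extension $\Phi_e$ (resp.\ $\Psi$) lies in $\BMO(\RR)$, indeed in ${\rm bmo}(\RR)$ after truncation, while the \emph{odd} version fails even to be in $\BMO(\RR)$ — then give $\EEeta f\notin\BMO(\R)$ but $\mathcal E^{\eta'}f\in\BMO(\R)$ (resp.\ ${\rm bmo}(\R)$) when $\eta(\s_\a)=1,\eta'(\s_\a)=-1$. Thus $f\in\BMO_{\eta'}(C_+)$ while, in the sense made precise in the text (no representative of the class of $\EEeta f$ is $\eta$-symmetric and in $\BMO$), $f\notin\BMO_\eta(C_+)$, so the two spaces are distinct; and symmetrically in the reversed case. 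The same function and the same localization give the local statement, using the truncated $\Phi$ and the fact, noted in the text, that the odd truncated function is not even in $\BMO(\RR)$, a fortiori not in ${\rm bmo}(\RR)$.

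The main obstacle I anticipate is purely technical: carefully justifying that ``$\BMO$-membership of $\EEeta f$'' can be tested patch-by-patch near the single wall $\mathcal W_\a$, i.e.\ that the contributions of the reflections across the \emph{other} walls — which are present in the full sum defining $\A$ but whose supports are disjoint from a neighborhood of $\mathcal W_\a\cap\operatorname{supp} f$ — do not interfere. This is where one must be slightly careful about the global nature of the $\BMO$ seminorm (large cubes see everything); the resolution is that $\operatorname{supp}(\EEeta f)$ is a \emph{bounded} set, so on large cubes the mean-oscillation is controlled by $\|\EEeta f\|_{L^1(\R)}$, while the failure of the $\BMO$ condition for the odd extension is a purely local phenomenon at the point $y_0$ (a logarithmic blow-up of oscillation on shrinking cubes straddling $\mathcal W_\a$), which no modification far from $y_0$ can cure. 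Making these two observations quantitative — a local lower bound on small cubes near $y_0$ coming from the one-dimensional odd example, and a global upper bound on the ``even'' side from boundedness of support plus the one-dimensional even example — is the crux, and everything else is bookkeeping with the averaging operator and the product structure of $W$ near a wall.
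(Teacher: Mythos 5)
Your overall architecture --- fix a simple root $\a\in\Sigma$ on which $\eta$ and $\eta'$ disagree, place a one-variable test singularity at a point $y_0$ deep inside the facet $\overline{C_+}\cap\langle\a\rangle^\bot$, and separate the two spaces by comparing the even and odd extensions across $\mathcal W_\a$ --- is exactly the paper's. But the concrete test function you chose defeats the argument. With $\Phi(t)=\sgn t\cdot\ind_{[1,3]}(|t|)\log\frac{1}{\big||t|-2\big|}$ and $y_0\in\mathcal W_\a$, the coordinate $t=\langle x-y_0,\a/|\a|\rangle$ is the signed distance to the wall, so your $f$ is supported in the slab $\{1<t<3\}$ and hence at distance $1$ from $\mathcal W_\a$. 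Consequently $\EEeta f$ and $\mathcal E^{\eta'}f$ are identical collections of disjoint, well-separated translates of the same ${\rm bmo}$ function, differing only in $\pm1$ signs; both lie in ${\rm bmo}(\R)\subset\BMO(\R)$ for \emph{every} $\eta$, and no distinction is detected.

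The deeper problem is that $\Phi$ is exactly the wrong one-dimensional model. It is exhibited in the text as an \emph{odd} unbounded function that nonetheless \emph{is} in ${\rm bmo}(\RR)$, precisely to caution against the naive belief that odd logarithmic singularities must leave $\BMO$. So your concluding claim --- that ``the odd version fails even to be in $\BMO(\RR)$'' --- is false for $\Phi$: the odd version of the even function $|\Phi|$ is $\Phi$ itself, and $\Phi\in{\rm bmo}(\RR)$. (Also, $\ind_{[-1,1]}\Phi$ is identically zero, since $\Phi$ is supported in $1\le|t|\le3$; you presumably meant $\ind_{[-1,1]}F_o$.) The model you need is $\Psi(t)=\max\{\log(1/|t|),0\}$, whose singularity is at $t=0$: $\Psi$ is even and in ${\rm bmo}(\RR)$, while $\sgn(t)\Psi(t)=\ind_{[-1,1]}F_o(t)$ is odd, has a sign jump at the origin, and is not in $\BMO(\RR)$. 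The paper simply takes $\hat\psi$, the restriction to $C_+$ of $\Psi(\cdot-x_0)$ with $x_0$ on the wall; radiality of $\Psi$ makes $\EEeta\hat\psi$ (for $\eta(\s_\a)=1$) coincide near $x_0$ with $\Psi(\cdot-x_0)$ and hence lie in ${\rm bmo}(\R)$, while $\mathcal E^{\eta'}\hat\psi$ (for $\eta'(\s_\a)=-1$) coincides on $B(x_0,1)$ with $\Psi(\cdot-x_0)$ broken along $\mathcal W_\a$, whose oscillation over shrinking cubes at $x_0$ is unbounded. With this substitution the proof becomes direct: no partition of unity is needed --- one checks small cubes at $x_0$ for the blowup and observes that a finite sum of disjointly supported translates of $\Psi$ is in ${\rm bmo}$.
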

\begin{proof}
We can assume that $d\ge2$. Let $\eta^{(1)}\neq \eta^{(2)}$, say, $\eta^{(1)}(\sigma_\a)=1$ and  $\eta^{(2)}(\sigma_\a)=-1$ for some $\a\in\Sigma$. 
Take $x_0$, `well inside' the $(d-1)$-dimensional interior of the facet $\mathcal F_\alpha=\overline{C_+}\cap \langle\a\rangle^\bot$, i.e. in the 
distance not smaller than 2 from the origin and from any of the $(d-2)$-dimensional faces (if any) of $C_+$, i.e. intersections of $\mathcal F_\a$ 
with any other different wall of $C_+$. Consider $\hat{\psi}$, the restriction to $C_+$ of $\psi(x)=\Psi(x-x_0)$, $x\in\R$, the latter function being  
in ${\rm bmo}(\R)$ ($\Psi$ as earlier defined). Then $\calE^{\eta^{(1)}}\hat{\psi}\in{\rm bmo}(\R)$. This is because $\calE^{\eta^{(1)}}\hat{\psi}$ is 
a linear combination, with $\pm1$ coefficients, of shifts of $\Psi$, with $\psi$ as one of the summands; here radiality of $\Psi$ plays a role. 
On the other hand, $\calE^{\eta^{(2)}}\hat{\psi}\notin\BMO(\R)$. This is because $\calE^{\eta^{(2)}}\hat{\psi}$ is a linear combination (with $\pm1$ 
coefficients) of shifts of the rotated function 
$$
\tilde\psi(x)=\psi(x)\ind_{\langle\a\rangle^\bot_+}-\psi(x)\ind_{\langle\a\rangle^\bot_-}, \qquad x\in\R,
$$
i.e. the function $\psi$ broken along the hyperplane $\langle\a\rangle^\bot$; here $\langle\a\rangle^\bot_{\pm}$ stand for the `upper' and `lower' 
half-spaces with $\langle\a\rangle^\bot$ as the supporting hyperplane ($C_+$ is contained in $\langle\a\rangle^\bot_{+}$).  The oscillations 
of $\calE^{\eta^{(2)}}\hat{\psi}$, which agrees on $B(x_0,1)$ with $\tilde\psi$, over small cubes centered at $x_0$ are unbounded. 

This proves the two claims of the proposition.
\end{proof}

\subsection{The case of orthogonal root systems} \label{sub:ort} 
We now leave the general case and specify considerations to orthogonal root systems. Similarly as in the case of Hardy spaces, we shall characterize 
$\BMO_\eta(\R_{+,k})$ and ${\rm bmo}_\eta(\R_{+,k})$ for the orthogonal root systems $R_k$; this is the contents of Theorems \ref{thm:2} and \ref{thm:2(loc)}.
Hence, in this and the next subsection, we assume $1\leq k\leq d$ to be fixed. Also, we make use of the notation introduced in the previous section. 

As mentioned before, we work simultaneously with function spaces and with function spaces modulo constants. In order to keep the notation transparent, 
we simply write $F\in\BMO(\R)$ even if $F$ is a function (and not an abstract class modulo constants). This is a short form of saying that 
$F\in L^1_{\mathrm{loc}}(\R)$ and $[F]\in \BMO(\R)$. The same comment applies to $\BMO_{\bf 0}(\R_{+,k})$ and $\BMO_z(\R_{+,k})$.

For the proof of the next theorem we shall need an auxiliary lemma which, in some sense, is a dual result to Lemma \ref{lm:1}. Notice that 
Lemma \ref{lm:3} (1) is true for abstract classes modulo constants as well. 

\begin{lemma}\label{lm:3}
Let $e$ be a coordinate vector and $F\in\BMO(\RR^d)$. Then:
	\begin{enumerate}
		\item the even (with respect to $\langle e\rangle^\bot$) extension of $F\big|_{\langle e\rangle^\bot_+}$ to $\RR^d$, namely 
		$F\ind_{\langle e\rangle^\bot_+}+(F\ind_{\langle e\rangle^\bot_+})\circ \sigma_e$, is in $\BMO(\RR^d)$, and 
		$$
		\big\Vert F\ind_{\langle e\rangle^\bot_+}+(F\ind_{\langle e\rangle^\bot_+})\circ \sigma_e\big\Vert_{*,\BMO(\RR^d)}\leq 2\Vert F\Vert_{*,\BMO(\RR^d)};
		$$
		\item if $F$ is odd with respect to $\langle e\rangle^\bot$, then $F\ind_{\langle e\rangle^\bot_+}\in \BMO(\RR^d)$ and 
		$$
		\Vert F\ind_{\langle e\rangle^\bot_+}\Vert_{\BMO(\RR^d)}\lesssim  \Vert F\Vert_{\BMO(\RR^d)}.
		$$
	\end{enumerate}
\end{lemma}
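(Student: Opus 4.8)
The plan is to prove both parts by reducing to one-dimensional statements in the single variable $x_j$ (where $e=e_j$) and then invoking the known fact that $\BMO$ is (essentially) a product-type space insofar as reflection in one coordinate hyperplane is concerned. For part (1), write $G=F\ind_{\langle e\rangle^\bot_+}+(F\ind_{\langle e\rangle^\bot_+})\circ\sigma_e$; this function is even in $\langle e\rangle^\bot$ and agrees with $F$ on $\langle e\rangle^\bot_+$. I would test the $\|\cdot\|_{*,\BMO}$ norm against an arbitrary cube $Q$ and split into cases according to the position of $Q$ relative to the hyperplane $\langle e\rangle^\bot$. If $Q\subset\langle e\rangle^\bot_+$ or $Q\subset\langle e\rangle^\bot_-$, then $G$ restricted to $Q$ is a reflected copy of $F$ on a congruent cube, so $\inf_c\frac1{|Q|}\int_Q|G-c|\le\|F\|_{*,\BMO}$ directly. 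The remaining case is when $Q$ straddles the hyperplane: there I would reflect the larger half of $Q$ across $\langle e\rangle^\bot$ to obtain a cube $Q^\sharp$, symmetric about $\langle e\rangle^\bot$, with $Q\subset Q^\sharp$ and $|Q^\sharp|\le 2|Q|$. Since $G$ is even in $\langle e\rangle^\bot$, one can estimate $\inf_c\frac1{|Q|}\int_Q|G-c|\le\frac{|Q^\sharp|}{|Q|}\inf_c\frac1{|Q^\sharp|}\int_{Q^\sharp}|G-c|$; and on the symmetric cube $Q^\sharp$, by evenness, $\int_{Q^\sharp}|G-c|=2\int_{Q^\sharp\cap\langle e\rangle^\bot_+}|F-c|$, while $|Q^\sharp\cap\langle e\rangle^\bot_+|=\tfrac12|Q^\sharp|$, so this equals $\inf_c\frac1{|Q^\sharp\cap\langle e\rangle^\bot_+|}\int_{Q^\sharp\cap\langle e\rangle^\bot_+}|F-c|$, which is bounded by $\|F\|_{*,\BMO}$ since $Q^\sharp\cap\langle e\rangle^\bot_+$ is a (truncated) cube sitting inside a genuine cube of comparable size. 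Tracking the factor $|Q^\sharp|/|Q|\le 2$ gives the claimed constant $2$.

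For part (2), with $F$ odd in $\langle e\rangle^\bot$, set $H=F\ind_{\langle e\rangle^\bot_+}$; the point is that $F=H-H\circ\sigma_e$. I would again test against an arbitrary cube $Q$. When $Q$ lies on one side of $\langle e\rangle^\bot$, say $Q\subset\langle e\rangle^\bot_+$, then $H|_Q=F|_Q$ and there is nothing to prove; if $Q\subset\langle e\rangle^\bot_-$ then $H$ vanishes on $Q$ and the oscillation is zero. The genuinely new case is $Q$ straddling the hyperplane. Here I cannot simply take $c=F_Q$; instead I would use the characterization via $\|\cdot\|_{\BMO}$ (mean-over-$Q$) together with oddness. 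Writing $Q=Q_+\cup Q_-$ with $Q_\pm=Q\cap\langle e\rangle^\bot_\pm$, we have $H_Q=\frac{|Q_+|}{|Q|}F_{Q_+}$ and $\frac1{|Q|}\int_Q|H-H_Q|$ needs to be controlled by $\|F\|_{\BMO}$. The standard device: by oddness $\int_{Q^\sharp}F=0$ for the symmetrized cube $Q^\sharp$ (so $F_{Q^\sharp}=0$), hence $\frac1{|Q^\sharp|}\int_{Q^\sharp}|F|\le\|F\|_{*,\BMO}\le\|F\|_{\BMO}$; then $\frac1{|Q|}\int_Q|H|\le\frac1{|Q|}\int_{Q_+}|F|\le\frac{|Q^\sharp|}{|Q|}\cdot\frac1{|Q^\sharp|}\int_{Q^\sharp}|F|\lesssim\|F\|_{\BMO}$, and therefore $\frac1{|Q|}\int_Q|H-H_Q|\le\frac2{|Q|}\int_Q|H|\lesssim\|F\|_{\BMO}$. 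Combining the cases yields $\|H\|_{\BMO}\lesssim\|F\|_{\BMO}$.

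The main obstacle, such as it is, is purely bookkeeping: making sure that in the straddling case the symmetrized cube $Q^\sharp$ (or the truncated cube $Q^\sharp\cap\langle e\rangle^\bot_+$) is comparable in measure to $Q$ and is contained in, or contains, a genuine Euclidean cube so that the defining suprema for $\|F\|_{*,\BMO(\R)}$ and $\|F\|_{\BMO(\R)}$ genuinely apply — this is where the implicit constants (and the clean constant $2$ in part (1)) come from, and it is worth being slightly careful that the reflection is across the coordinate hyperplane, which maps cubes with axis-parallel sides to cubes with axis-parallel sides, so no geometric distortion occurs. I would also remark that part (2) could alternatively be deduced from the boundedness on $\BMO(\R)$ of the "folding" operator $F\mapsto F\ind_{\langle e\rangle^\bot_+}$ restricted to the odd subspace, but the direct cube-by-cube argument above is self-contained and avoids invoking outside machinery.
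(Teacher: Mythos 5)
Your overall strategy (cube-by-cube, split by position relative to $\langle e\rangle^\bot$, use reflection) matches the paper's, but your argument for part (1) contains a concrete error that prevents you from obtaining the stated constant $2$.

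In the straddling case you construct a cube $Q^\sharp$, symmetric about $\langle e\rangle^\bot$, with $Q\subset Q^\sharp$ and, you claim, $|Q^\sharp|\le 2|Q|$. That inequality is false as soon as $d\ge 2$: if $Q$ has sidelength $l$ and its $e$-coordinate range is, say, $(-\varepsilon,\, l-\varepsilon)$ with $\varepsilon$ small, any \emph{cube} symmetric in $\langle e\rangle^\bot$ and containing $Q$ must have $e$-range $\supset(-(l-\varepsilon),\,l-\varepsilon)$, hence sidelength at least $2(l-\varepsilon)$, and therefore volume as large as $\approx 2^d|Q|$. (What does satisfy $|Q^\sharp|\le 2|Q|$ is the rectangular \emph{box} $Q_+\cup\sigma_e(Q_+)$, but that is not a cube, so the $\BMO$ definition cannot be applied to it directly; you would still have to enlarge it.) Moreover, once you pass to the half-cube $Q^\sharp\cap\langle e\rangle^\bot_+$, estimating its mean oscillation by $\|F\|_{*,\BMO}$ requires enlarging once more to a genuine cube, which costs another factor $2$. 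Chasing all factors in your argument gives at best $\le 2^{d+1}\Vert F\Vert_{*,\BMO(\RR^d)}$, not $\le 2\Vert F\Vert_{*,\BMO(\RR^d)}$ as asserted. The paper's proof achieves the clean $2$ by a sharper observation: write
\begin{equation*}
\int_Q \big|F\ind_{\langle e\rangle^\bot_+}+(F\ind_{\langle e\rangle^\bot_+})\circ\sigma_e - c\big| \;=\; \int_{Q_+}|F-c| + \int_{\sigma_e(Q_-)}|F-c|,
\end{equation*}
and note that $Q_+\cup\sigma_e(Q_-)$ fits inside a single cube $\widehat Q\subset\langle e\rangle^\bot_+$ with $|\widehat Q|=|Q|$ (because the $e$-extents of $Q_+$ and $\sigma_e(Q_-)$ together span at most $l(Q)$). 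No volume is inflated and the factor $2$ comes out exactly. You should replace the symmetrization by this ``fold both pieces into a same-volume cube'' step.

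Your part (2) is essentially the paper's argument up to constants, which is all the statement requires; one small notational slip: after using $F_{Q^\sharp}=0$ you take $c=0$, not the optimal constant, so the correct bound is $\frac1{|Q^\sharp|}\int_{Q^\sharp}|F|\le\Vert F\Vert_{\BMO(\RR^d)}$ directly — writing $\le\Vert F\Vert_{*,\BMO}$ first is not justified (the infimum over $c$ could be achieved elsewhere). This does not affect the conclusion.
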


\begin{proof}
	\noindent{\textbf{(1)}} Fix $F\in\BMO(\R)$ and let $Q$ be a cube in $\RR^d$. Our aim is to prove
	$$
\inf_{c\in\C} \frac1{|Q|} \int_{Q} \big| F\ind_{\langle e\rangle^\bot_+}+(F\ind_{\langle e\rangle^\bot_+})\circ\sigma_e - c\big|\leq 2\Vert F\Vert_{*,BMO(\RR^d)}. 
	$$
The only non-trivial case is when $Q\cap \langle e\rangle^\bot\neq \emptyset$. We denote $Q_{\pm}=Q\cap \langle e\rangle^\bot_{\pm}$. For any $c\in\C$ we obtain
	\begin{equation*}
\int_{Q}\big|F\ind_{\langle e\rangle^\bot_+}+(F\ind_{\langle e\rangle^\bot_+})\circ\sigma_e-c\big|=\int_{Q_+}\big|F-c\big| +\int_{\sigma_e(Q_{-})}\big|F-c\big|.
	\end{equation*}
Let $\widehat{Q}\subset \langle e\rangle^\bot_+$ be the cube such that $Q_+\cup\sigma_e(Q_{-})\subset \widehat{Q}$ and $|\widehat{Q}|= |Q|$. Thus,
	\begin{equation*}
	\inf_{c\in\C}\frac1{|Q|}\int_{Q}\big|F\ind_{\langle e\rangle^\bot_+}+(F\ind_{\langle e\rangle^\bot_+})\circ \sigma_e - c\big|\leq \inf_{c\in\C} \frac2{|\widehat{Q}|}  \int_{\widehat{Q}} \big| F - c\big|\leq 2\Vert F\Vert_{*,\BMO(\RR^d)}.
	\end{equation*}
	
	\noindent{\textbf{(2)}} Fix  $F\in \BMO(\RR^d)$ such that $F\circ\sigma_e=-F$. Let $Q$ be a cube in $\RR^d$. We shall check that 
	$$
	\inf_{c\in\C} \frac1{|Q|} \int_{Q} \big| F\ind_{\langle e\rangle^\bot_+}-c\big| \lesssim \Vert F\Vert_{\BMO(\RR^d)}. 
	$$
As before, we assume $Q\cap \langle e\rangle^\bot\neq\emptyset$ and we use the same notation. Observe that
\begin{align*}
	\inf_{c\in\C} \frac1{|Q|}\int_{Q}\big| F\ind_{\langle e\rangle^\bot_+}-c\big| \leq  \inf_{c\in\C} \frac1{|Q|}\int_{Q}| F-c|+  \frac1{|Q|}\int_{Q_{-}}| F|\leq \Vert F\Vert_{*,\BMO(\RR^d)} + \frac1{2|Q|}\int_{Q_{-}\cup \sigma_e(Q_{-})}| F|.
\end{align*}
Let $\widehat{Q}\subset \RR^d$ be a cube symmetric in $\langle e\rangle^\bot$, such that $Q_{-}\cup \sigma_e(Q_{-})\subset \widehat{Q}$ and $|\widehat{Q}| < 2^d|Q| $. Hence,
	\begin{align*}
	\frac1{2|Q|}\int_{Q_{-}\cup \sigma_e(Q_{-})}| F|\leq  \frac{2^{d-1}}{|\widehat{Q}|}\int_{\widehat{Q}}| F| \leq 2^{d-1} \Vert F\Vert_{\BMO(\RR^d)},
	\end{align*}
where in the second inequality we used the fact that $F_{\widehat{Q}}=0$. This concludes the proof.	
\end{proof}

We shall need the fact (which is certainly a folklore result), that the definition of the ${\rm bmo}(\R)$ norm does not depend on choosing 1 as a breaking 
point, i.e. 1 can be replaced by an arbitrary positive number with equivalence of norms. 
For $a>0$ and $F\in L^1_{\rm loc}(\R)$ we introduce the quantity
$$
\|F\|_{*, {\rm bmo}_a(\R)}:=\sup_{l(Q)<a}\inf_{c\in \C}\frac1{|Q|}\int_Q|F-c|+\sup_{l(Q)\ge a}\frac1{|Q|}\int_Q|F|
$$
and define ${\rm bmo}_a(\R)=\{F\colon \|F\|_{*, {\rm bmo}(\R);a}<\infty\}$ so that ${\rm bmo}_1(\R)$ is just ${\rm bmo}(\R)$. 

The  proof of the the following technical result is simple but we  provide details for completeness.
\begin{proposition} \label{pro:app2}
Let $0<a,b<\infty$. Then the spaces ${\rm bmo}_{*,a}(\R)$ and ${\rm bmo}_{*,b}(\R)$ coincide with equivalence of norms (and implicit constants depending on 
$a$ and $b$).
\end{proposition}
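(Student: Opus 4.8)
The plan is to show that $\|F\|_{*,{\rm bmo}_a(\R)}$ and $\|F\|_{*,{\rm bmo}_b(\R)}$ are equivalent; by symmetry and by composing two such equivalences (from $a$ to $1$ and from $1$ to $b$), it suffices to treat the case $b=1$, i.e.\ to compare ${\rm bmo}_{*,a}(\R)$ with ${\rm bmo}(\R)={\rm bmo}_{*,1}(\R)$; moreover it is enough to consider $a>1$, since the case $a<1$ follows by swapping the roles of the two exponents. So fix $a>1$ and write $N_1=\|\cdot\|_{*,{\rm bmo}(\R)}$, $N_a=\|\cdot\|_{*,{\rm bmo}_a(\R)}$.

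The inequality $N_1(F)\lesssim N_a(F)$ is almost immediate: the ``small cube'' part of $N_1$ (cubes with $l(Q)<1$) is dominated by the ``small cube'' part of $N_a$ (since $l(Q)<1<a$), and for a cube $Q$ with $1\le l(Q)<a$ one bounds $\frac1{|Q|}\int_Q|F|$ by $\inf_c\frac1{|Q|}\int_Q|F-c|+|F_Q|$ and then controls $|F_Q|$ by covering $Q$ by boundedly many (depending on $a$ and $d$) cubes of sidelength $a$ and averaging; alternatively one simply notes $\frac1{|Q|}\int_Q|F|\le\inf_c\frac1{|Q|}\int_Q|F-c| + \frac{1}{|Q|}\int_Q|F-c_0|$ is not quite what we want — the cleanest route is: for $1\le l(Q)<a$, write $\frac1{|Q|}\int_Q|F|\le \frac1{|Q|}\int_Q|F-c|+|c|$ and choose $c=F_{Q'}$ for a cube $Q'\supset Q$ with $l(Q')=a$, then $|F_{Q'}|\le \frac1{|Q'|}\int_{Q'}|F|\le N_a(F)$ and $\frac1{|Q|}\int_Q|F-F_{Q'}|\le \frac{|Q'|}{|Q|}\cdot\frac1{|Q'|}\int_{Q'}|F-F_{Q'}|\le a^d\,\frac1{|Q'|}\int_{Q'}|F-F_{Q'}|$, which is $\lesssim_a N_a(F)$ because $l(Q')=a<$ ... here one needs $l(Q')<a$, not $=a$; to avoid the boundary issue just take $l(Q')$ slightly larger than $a$ and absorb into the ``large cube'' part, or take $l(Q')\in[a,2a)$ and use the large-cube bound. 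Either way the constants depend only on $a$ and $d$.

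For the reverse inequality $N_a(F)\lesssim N_1(F)$, the small-cube part of $N_a$ involves cubes with $l(Q)<a$, which splits into those with $l(Q)<1$ (directly controlled by the small-cube part of $N_1$) and those with $1\le l(Q)<a$; for the latter, $\inf_c\frac1{|Q|}\int_Q|F-c|\le \frac1{|Q|}\int_Q|F|\le \frac1{|Q|}\int_Q|F|\le N_1(F)$ using the large-cube part of $N_1$. The genuinely substantive point is the large-cube part of $N_a$: for a cube $Q$ with $l(Q)\ge a$ we must bound $\frac1{|Q|}\int_Q|F|$ by $C_{a,d}\,N_1(F)$. The idea is to pick one fixed unit subcube $Q_0\subset Q$, note $|F_{Q_0}|\le N_1(F)$ from the large-cube part of $N_1$, and then control $|F_Q-F_{Q_0}|$ by a chain/telescoping argument: cover $Q$ by $\simeq l(Q)^d$ unit cubes and use the standard estimate that adjacent (or overlapping, dilated) cubes of comparable size have close averages, so that $|F_{Q_0}-F_{Q_1}|\lesssim N_1(F)$ for neighbours, and any two unit subcubes of $Q$ are joined by a chain of length $\lesssim l(Q)$ — but this only gives $|F_Q-F_{Q_0}|\lesssim l(Q)\,N_1(F)$, which is \emph{not} good enough. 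The standard fix, and the step I expect to be the main obstacle, is to telescope through \emph{dyadically growing} cubes instead: with $2^{m}\simeq l(Q)$ take cubes $Q_0\subset 2Q_0\subset 4Q_0\subset\cdots\subset 2^mQ_0\supset Q$ (all still inside, say, $3Q$, adjusting constants), each of sidelength $\ge 1$, and estimate $|F_{2^{j+1}Q_0}-F_{2^jQ_0}|\le \frac{1}{|2^jQ_0|}\int_{2^jQ_0}|F-F_{2^{j+1}Q_0}|\le 2^d\cdot\frac{1}{|2^{j+1}Q_0|}\int_{2^{j+1}Q_0}|F|\le 2^d N_1(F)$ using the large-cube part of $N_1$ (valid since $l(2^{j+1}Q_0)\ge 2\ge1$); summing over $j=0,\dots,m-1$ gives $|F_Q-F_{Q_0}|\lesssim m\,N_1(F)\simeq \log l(Q)\cdot N_1(F)$ — still logarithmically bad. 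The correct resolution is subtler: one does \emph{not} go through $|F_Q|$ at all for the large-cube part, but argues directly. For $l(Q)\ge a>1$, split $Q$ into $\lesssim (l(Q)/1)^d$ essentially disjoint unit cubes $Q_i$; then $\frac1{|Q|}\int_Q|F|=\frac1{|Q|}\sum_i\int_{Q_i}|F|\le \frac1{|Q|}\sum_i|Q_i|\cdot\frac1{|Q_i|}\int_{Q_i}|F|$, and since each $Q_i$ has sidelength $\ge$ ... no, sidelength $=1$, so again use the large-cube part of $N_1$: $\frac1{|Q_i|}\int_{Q_i}|F|\le N_1(F)$, whence $\frac1{|Q|}\int_Q|F|\le \frac1{|Q|}\sum_i|Q_i|\,N_1(F)= N_1(F)$ (as $\sum_i|Q_i|=|Q|$ up to null sets). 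This clean averaging argument sidesteps the telescoping entirely. I would assemble these pieces, state the resulting constants depend only on $a,b,d$, and conclude.

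The only care needed is the cosmetic one that, for a cube $Q$ with $1\le l(Q)<a$, partitioning into unit cubes is impossible exactly, so for those intermediate cubes one uses the triangle-inequality argument of the second paragraph (reducing to a cube of sidelength in $[a,2a)$ and invoking the large-cube bound of $N_a$ when proving $N_1\lesssim N_a$, or using the large-cube part of $N_1$ directly when proving $N_a\lesssim N_1$). I would organize the proof as: (i) reduce to $b=1$, $a>1$; (ii) prove $N_1\le C_{a,d}N_a$ in two sub-cases on $l(Q)$; (iii) prove $N_a\le C_{a,d}N_1$, the large-cube case handled by the unit-cube averaging just described, the small-cube case ($1\le l(Q)<a$) handled by a direct comparison; (iv) conclude.
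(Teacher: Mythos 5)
Your proof is correct and follows essentially the same strategy as the paper's: reduce to a single pair of breaking points and then compare the two norms band by band in sidelength, using a dilation to push an intermediate-sized cube into the large-cube regime. Two remarks on efficiency are in order, because you labelled as ``the genuinely substantive point'' a step that is in fact vacuous. In your normalization $a>1$, $b=1$, the large-cube supremum in $N_a$ ranges over cubes with $l(Q)\ge a$, which is a \emph{subset} of the cubes with $l(Q)\ge 1$ appearing in the large-cube part of $N_1$; hence $\sup_{l(Q)\ge a}|F|_Q \le \sup_{l(Q)\ge 1}|F|_Q \le N_1(F)$ outright, and the entire telescoping discussion followed by the unit-cube averaging (which also needs a small fix, since a cube of noninteger sidelength cannot be partitioned into exact unit cubes — take subcubes of sidelength in $[1,2)$ instead) is superfluous. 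Similarly, for the band $1\le l(Q)<a$ inside $N_1\lesssim N_a$, the triangle-inequality detour through $F_{Q'}$ can be replaced by the single dilation estimate $|F|_Q \le a^d\,|F|_{aQ}$, where $l(aQ)=a\,l(Q)\ge a$ so $|F|_{aQ}\le N_a(F)$; this is exactly the paper's one-line argument (stated there for $a=1$, $b=2$ as $|F|_{\widehat Q}\le 2^d|F|_{2\widehat Q}$). With those simplifications your proof collapses to the paper's.
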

\begin{proof}
We explain the idea of the proof for $a=1$ and $b=2$; generalization to arbitrary $0<a<b<\infty$ is straightforward. For notational simplicity 
we let $|F|_Q=\frac1{|Q|}\int_Q|F|$ and $|F-c|_Q=\frac1{|Q|}\int_Q|F-c|$. 

It is clear that 
\begin{align*}
\|F\|_{*, {\rm bmo}_2(\R)}&=\sup_{l(Q)<2}\inf_{c\in \C}|F-c|_Q+\sup_{l(Q)\ge 2}|F|_Q\\
&\le \sup_{l(Q)<1}\inf_{c\in \C}|F-c|_Q+\sup_{l(Q)\ge 1}|F|_Q+\sup_{1\le l(Q)<2}\inf_{c\in \C}|F-c|_Q.
\end{align*}
Obviously,
$$
\sup_{1\le l(Q)<2}\inf_{c\in \C}|F-c|_Q\le  \sup_{1\le l(Q)<2}|F|_Q\le \sup_{l(Q)\ge 1}|F|_Q,
$$
so that $\|F\|_{*, {\rm bmo}_2(\R)}\le 2\|F\|_{*, {\rm bmo}_1(\R)}$ follows. 

For the opposite inequality we have  
\begin{align*}
\|F\|_{*, {\rm bmo}_1(\R)}&=\sup_{l(Q)<1}\inf_{c\in \C}|F-c|_Q+\sup_{l(Q)\ge 1}|F|_Q\\
&\le \sup_{l(Q)<2}\inf_{c\in \C}|F-c|_Q+\sup_{l(Q)\ge 2}|F|_Q+\sup_{1\le l(Q)<2}|F|_Q.
\end{align*}
To estimate the latter summand take a cube $\widehat Q$ with $1\le l(\widehat Q)<2$. It follows that $|F|_{\widehat Q}\le 2^d |F|_{2\widehat Q}$ and hence,  
$$
\sup_{1\le l(Q)<2}|F|_Q\le 2^d\sup_{l(Q)\ge2}|F|_Q.
$$
This leads to $\|F\|_{*, {\rm bmo}_1(\R)}\le (2^d+1)\|F\|_{*, {\rm bmo}_2(\R)}$.
\end{proof}

\begin{remark} \label{rm:4}
Lemma \ref{lm:3} remains valid in the `local' setting, i.e. when in the statement we replace $\BMO(\RR^d)$ by ${\rm bmo}(\RR^d)$. The proof follows that of 
Lemma \ref{lm:3} with the following changes. For the proof of the relevant statement replacing (1), for fixed $F\in{\rm bmo}(\R)$  
and with notation $\tilde F=F\ind_{\langle e\rangle^\bot_+}+(F\ind_{\langle e\rangle^\bot_+})\circ\sigma_e$, we wish to check that 
$$
\sup_{l(Q)<1}\inf_{c\in\C} \frac1{|Q|}\int_{Q} \big|\tilde F-c\big|+\sup_{l(Q)\ge1}\frac1{|Q|}\int_{Q}\big|\tilde F\big|\le 2\Vert F\Vert_{*,{\rm bmo}(\RR^d)}. 
$$
To achieve this we take a cube $Q$ in $\RR^d$ and consider separately the cases $l(Q)<1$ and $l(Q)\ge1$ following the argument from the proof of (1) 
in Lemma \ref{lm:3}. 

For the proof of the relevant statement replacing (2), for fixed $F\in{\rm bmo}(\R)$ we wish to check that 
$$
\sup_{l(Q)<1}\inf_{c\in\C} \frac1{|Q|} \int_{Q} \big|F\ind_{\langle e\rangle^\bot_+}  - c\big|+
\sup_{l(Q)\ge1} \frac1{|Q|} \int_{Q} \big|F\ind_{\langle e\rangle^\bot_+} \big| \leq 2\Vert F\Vert_{*,{\rm bmo}(\RR^d)}. 
$$
Again, given a cube $Q$ in $\RR^d$ we consider separately the cases $l(Q)<1$ and $l(Q)\ge1$ following the argument from the proof of (2) in Lemma \ref{lm:3}. 
This time, considering the larger cube $\widehat Q$, we apply Proposition \ref{pro:app2}. With this, one can easily reach the desired claim. 
\end{remark}

For the proofs of equivalences $(1)\Longleftrightarrow (2)$ in Theorems \ref{thm:2} and \ref{thm:2(loc)} it is reasonable to introduce convenient notation. 
Given a suitable function $f$ on $\R_{+,k}$ and a parameter $0<a\le\infty$ we define
$$
m_{{\rm osc},a}(f)=\sup_{l(Q')<a}\inf_{c\in \C}|f-c|_{Q'} \quad {\rm and} \quad m_{{\rm mv},a}(f)=\max\{m^{(1)}_{{\rm mv},a}(f), m^{(2)}_{{\rm mv},a}(f)\},
$$
where
$$
m^{(1)}_{{\rm mv},a}(f)=\sup_{l(Q')\ge a}|f|_{Q'} \quad {\rm and} \quad  m^{(2)}_{{\rm mv},a}(f)=\sup_{\substack{l(Q'')<a\\Q'' adjacent}}|f|_{Q''}. 
$$
Here $Q'$ and $Q''$ run over the collections of cubes in $\R_{+,k}$ satisfying the corresponding length restrictions, and in case of $m^{(2)}_{{\rm mv},a}(f)$,
additionally $Q''$ are  \textit{adjacent} (in the sense as before the statement of Theorem \ref{thm:2}); $|f-c|_{Q'}$ and $|f|_{Q''}$ denote the  the 
\textit{mean values} of $|f-c|$ and $|f|$ over corresponding cubes. Notice that only $m^{(2)}_{{\rm mv},a}(f)$ depends on $\eta$ (which is not indicated) 
and for $\eta=\textbf{0}$ (there are no adjacent cubes then) we set $m^{(2)}_{{\rm mv},a}(f)=0$. For $a=1$ we shall drop $a$ in our notation writing simply 
$m_{\rm osc}(f)$, etc. 

In the theorem that follows mind that for $\eta={\bf 0}$ the statements concern abstract classes of functions, i.e. both $f$ and $F$ are considered modulo constants (it is clear what then $F\big\vert_{\R_{+,k}}=f$ means); otherwise, for $\eta\neq{\bf 0}$, the statements concern genuine functions. Also, notice 
that for $\eta={\bf 0}$ the assumptions imposed on the support of $F$ are void (note that $\RR^{{\bf 0},1}_+=\R$ and $\RR^{{\bf 0},0}_+=\R_{+,k}$) and (3) 
and (4) are identical. In addition, observe that since $\RR^{\eta,1}_+\subset \big(\RR^{\eta,0}_+\setminus \R_{+,k}\big)^{\mathrm{c}}$, for $\eta\neq{\bf 0}$ condition (4) is seemingly weaker than condition (3), but in fact these conditions occur to be equivalent. It is also worth pointing out a duality between conditions (3) and (4) in Theorem \ref{thm:1}, and conditions (3) and (4) below. Finally, we define
\begin{equation*}
M_1(f)=\sup_{Q'}\frac1{|Q'|}\int_{Q'}|f-f_{Q'}|, \qquad M_2(f)=\sup_{Q''}\frac1{|Q''|}\int_{Q''}|f|,
\end{equation*}
where the first supremum is taken over all cubes $Q'\subset\R_{+,k}$ and the second supremum is taken over all cubes  $Q''\subset\R_{+,k}$ 
 adjacent to at least one of the walls of $\R_{+,k}$ with the minus sign attached by $\eta$;

\begin{theorem}\label{thm:2}
Let $f\in L^1_{\mathrm{loc}}(\R_{+,k})$ and $\eta\in\Z_2^k$. The following conditions are equivalent:
	\begin{enumerate}
		\item $f\in \BMO_\eta(\R_{+,k})$;
		\item $M_1(f)+M_2(f)<\infty$; 
		\item there exists $F\in \BMO(\RR^d)$, supported in $\RR^{\eta,1}_+$ and such that $F\big|_{\R_{+,k}}=f$;
		\item there exists $F\in \BMO(\RR^d)$, supported in  $(\RR^{\eta,0}_+\setminus \R_{+,k})^{\mathrm{c}}$ and such that $F\big|_{\R_{+,k}}=f$.
	\end{enumerate}
Moreover, $\Vert f\Vert_{\BMO_\eta(\R_{+,k})}\simeq M_1(f)+M_2(f)\simeq \inf_{F} \Vert F\Vert_{\BMO(\R)}$, where the infimum is taken over all $F$ as in (3)  
or in (4), respectively.
\end{theorem}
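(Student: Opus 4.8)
The strategy mirrors the proof of Theorem~\ref{thm:1}, exploiting the duality between the even/odd reflection behaviour encoded by $\eta$ and the structure of $\RR^{\eta,0}_+$ versus $\RR^{\eta,1}_+$. The plan is to establish the cycle of implications $(1)\Rightarrow(3)\Rightarrow(4)\Rightarrow(1)$ together with $(1)\Leftrightarrow(2)$, tracking norm comparabilities throughout. Since for $\eta=\mathbf{0}$ the walls carry no minus sign, $M_2(f)=0$, the region hypotheses in (3) and (4) are vacuous, and every assertion reduces to the classical statement $\BMO_{\mathbf{0}}(\R_{+,k})=\BMO_z(\R_{+,k})$ versus $\BMO_r(\R_{+,k})$; so throughout one may assume $\eta\neq\mathbf{0}$, and the genuine-function (not modulo-constants) convention is in force.

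\textbf{Steps.} First, $(1)\Rightarrow(3)$: if $f\in\BMO_\eta(\R_{+,k})$ then $\calE^\eta f\in\BMO(\R)$ and $\calE^\eta f$ is odd with respect to each hyperplane $\langle e_{d-k+i}\rangle^\bot$ with $\eta_i=1$. Apply Lemma~\ref{lm:3}(2) consecutively to each such coordinate vector $e_{d-k+i}$ (there is at least one since $\eta\neq\mathbf{0}$); each application multiplies the supporting half-space constraint and keeps us inside $\BMO(\R)$ with $\Vert\cdot\Vert_{\BMO(\R)}\lesssim\Vert\calE^\eta f\Vert_{\BMO(\R)}$. The resulting function $F:=(\calE^\eta f)\ind_{\RR^{\eta,1}_+}$ is supported in $\RR^{\eta,1}_+$, restricts to $f$ on $\R_{+,k}$, and satisfies $\Vert F\Vert_{\BMO(\R)}\lesssim\Vert f\Vert_{\BMO_\eta(\R_{+,k})}$. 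The implication $(3)\Rightarrow(4)$ is trivial because $\RR^{\eta,1}_+\subset\bigl(\RR^{\eta,0}_+\setminus\R_{+,k}\bigr)^{\mathrm c}$. For $(4)\Rightarrow(1)$: given $F\in\BMO(\R)$ supported in $\bigl(\RR^{\eta,0}_+\setminus\R_{+,k}\bigr)^{\mathrm c}$ with $F|_{\R_{+,k}}=f$, enumerate the coordinates $i$ with $\eta_i=0$ and apply Lemma~\ref{lm:3}(1) repeatedly, each time replacing the current function by its even extension across the corresponding hyperplane $\langle e_{d-k+i}\rangle^\bot$; the support hypothesis on $F$ guarantees that successive even extensions match $\calE^\eta f$, and after exhausting all $\eta_i=0$ coordinates one obtains exactly $\calE^\eta f\in\BMO(\R)$ with $\Vert\calE^\eta f\Vert_{\BMO(\R)}\lesssim\Vert F\Vert_{\BMO(\R)}$, i.e.\ $f\in\BMO_\eta(\R_{+,k})$. (If the list of $\eta_i=0$ coordinates is empty, i.e.\ $\eta=\mathbf{1}$, conditions (3) and (4) already say $F|_{\R_{+,k}}=f$ with no support restriction, and $\calE^{\mathbf 1}f$ is a finite signed sum of reflections of $F$, directly in $\BMO(\R)$.)

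\textbf{The equivalence $(1)\Leftrightarrow(2)$.} One direction: if $f\in\BMO_\eta(\R_{+,k})$, then for any cube $Q'\subset\R_{+,k}$ the oscillation of $\calE^\eta f$ over $Q'$ equals that of $f$, bounding $M_1(f)$; and for a cube $Q''$ adjacent to a wall $\mathcal W_i$ with $\eta_i=1$, reflect $Q''$ across $\mathcal W_i$ to form a cube $\widehat Q$ on which $\calE^\eta f$ is odd, so $(\calE^\eta f)_{\widehat Q}=0$ and $|f|_{Q''}\le 2^{d}\,\frac1{|\widehat Q|}\int_{\widehat Q}|\calE^\eta f|\lesssim\Vert\calE^\eta f\Vert_{\BMO(\R)}$, bounding $M_2(f)$. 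Conversely, assume $M_1(f)+M_2(f)<\infty$; to show $\calE^\eta f\in\BMO(\R)$ it suffices to control $\inf_c|\calE^\eta f-c|_Q$ for an arbitrary cube $Q\subset\R$, and by the group action one may assume $Q$ meets $\overline{\R_{+,k}}$. If $Q\subset\overline{\R_{+,k}}$ use $M_1$; otherwise write $Q\cap g(\R_{+,k})=g(Q\cap\R_{+,k})$ for the chambers $g$ it meets and, depending on whether $Q$ is small (comparable to its distance from the walls it crosses) or large relative to $\mathrm{dist}(Q,\partial\R_{+,k})$, estimate the oscillation either by $M_1$ applied to a sub-cube inside one chamber plus the $\eta$-sign-matching across walls, or by $M_2$ applied directly.

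\textbf{Main obstacle.} The delicate point is the converse $(2)\Rightarrow(1)$ — the combinatorial bookkeeping when a cube $Q\subset\R$ straddles several of the reflection hyperplanes with mixed signs. One has to split $Q$ along those hyperplanes into the pieces $Q\cap g(\R_{+,k})$, compare $\calE^\eta f$ on each piece with $f$ pulled back by $g$, and then choose the single constant $c$ cleverly: where $Q$ crosses a $\eta_i=1$ wall the natural choice forces $c$ toward $0$ (odd symmetry), and where it crosses a $\eta_i=0$ wall the even reflection is harmless; the interaction of these two regimes, and the passage from the controlled quantities $M_1,M_2$ (which only see cubes inside $\R_{+,k}$ or adjacent to a minus-wall) to arbitrary cubes in $\R$, is where the argument needs care, essentially reversing the reflection/extension procedure that was run in $(1)\Rightarrow(3)$ and $(4)\Rightarrow(1)$. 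I expect this to be handled exactly by chaining Lemma~\ref{lm:3}(1)--(2) rather than by a direct cube-by-cube estimate, which is why the cleanest route is to prove $(1)\Leftrightarrow(3)\Leftrightarrow(4)$ first and then deduce $(1)\Leftrightarrow(2)$ from the equivalence with (3) together with the adjacent-cube reflection trick above. The norm comparability $\Vert f\Vert_{\BMO_\eta(\R_{+,k})}\simeq M_1(f)+M_2(f)\simeq\inf_F\Vert F\Vert_{\BMO(\R)}$ then follows by collecting the constants produced along each implication.
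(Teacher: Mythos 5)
Your outline of $(1)\Rightarrow(3)\Rightarrow(4)$ matches the paper, but there is a concrete gap in $(4)\Rightarrow(1)$ for the case $\mathbf{0}\neq\eta\neq\mathbf{1}$. After applying Lemma~\ref{lm:3}(1) across each hyperplane $\langle e_{d-k+i}\rangle^\bot$ with $\eta_i=0$ you do \emph{not} ``obtain exactly $\calE^\eta f$''; the support hypothesis on $F$ only guarantees that the iterated even extension equals $(\calE^\eta f)\,\ind_{\RR^{\eta,1}_+}$, which coincides with $\calE^\eta f$ on $\RR^{\eta,1}_+$ but vanishes off it, whereas $\calE^\eta f$ is odd across the $\eta_i=1$ walls and so is nonzero there. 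To conclude, the paper performs one further step: it factors $\calE^\eta=\calE^{\eta^{(1)}}\circ\calE^{\eta^{(0)}}$ according to the product decomposition of $W(R_k)$, observes that the function obtained after the even extensions is precisely $\calE^{\eta^{(0)}}f$ (which is supported in $\RR^{\eta,1}_+$), and then notes that $\calE^{\eta^{(1)}}$ applied to a function supported in $\RR^{\eta,1}_+$ is a finite signed sum of $O(\R)$-reflections of that function, so its $\BMO$ norm is controlled by the triangle inequality. Without this last step the norm inequality $\|\calE^\eta f\|_{\BMO(\R)}\lesssim\|F\|_{\BMO(\R)}$ does not follow from what you have written.

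Your treatment of $(1)\Leftrightarrow(2)$ is also incomplete, and your guess that the paper deduces it by chaining through $(3)$ is not what happens: the paper proves $(1)\Leftrightarrow(2)$ \emph{directly} by a cube-by-cube estimate on $F=\calE^\eta f$. For the oscillation bound one observes that only cubes $Q$ crossing at least one wall matter (by symmetry), reduces (again by symmetry, coordinate by coordinate) to $Q$ with majority mass on the positive side, and then distinguishes two cases: if all crossed walls carry a $+$ sign one controls $|F-c|_Q$ by $M_1$ applied to the smallest cube $Q'\subset\R_{+,k}$ containing $Q\cap\R_{+,k}$; if some crossed wall carries a $-$ sign one controls $|F|_Q$ by $M_2$ applied to an adjacent cube $Q''\subset\R_{+,k}$. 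For the reverse inequality, the control of $M_2$ rests on choosing a larger cube $\widehat{Q''}$ containing $Q''$ and symmetric with respect to the minus-wall(s), so that $(\calE^\eta f)_{\widehat{Q''}}=0$ and $|f|_{Q''}\lesssim\|F\|_{\BMO}$. Your sketch gestures at the right ideas but leaves precisely this combinatorial case analysis (mixed $\pm$ signs, choice of constant $c$) unresolved, and this is the portion you flag yourself as the ``main obstacle''; the paper resolves it by the explicit cube construction rather than by the Lemma~\ref{lm:3} machinery.
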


\begin{proof} 
 \textbf{(1)$\Longleftrightarrow$(2)} 
Let $F=\calE^\eta f$ for $f\in L^1_{\rm loc}(\R_{+,k})$. Proving the desired equivalence together with equivalence of relevant quantities can be replaced by verifying that
\begin{equation}\label{abc'}
\|F\|_{*,\BMO(\R)}\lesssim m_{{\rm osc},\infty}(f)+m^{(2)}_{{\rm mv},\infty}(f)\lesssim \|F\|_{*,\BMO(\R)},
\end{equation}
with implicit constants independent of $f$. 
Notice that $m_{{\rm osc},\infty}(f)$ measures the total  oscillation of $f$ over all cubes contained in $\R_{+,k}$, and $m^{(2)}_{{\rm mv},\infty}(f)$ 
gives the supremum of the mean values of $|f|$ over all adjacent cubes (if any) contained in $\R_{+,k}$. Therefore proving the RHS in \eqref{abc'} reduces 
to checking that
$$
m^{(2)}_{{\rm mv},\infty}(f)\lesssim \|F\|_{*,\BMO(\R)},
$$
and additionally we can assume that $\eta\neq \textbf{0}$. Let $Q''\subset \R_{+,k}$ be adjacent. To verify that $|f|_{Q''}\lesssim \|F\|_{*,\BMO(\R)}$  
let $\widehat{Q''}$ be a smallest cube in $\R$ containing  $Q''$ and symmetric with respect to any wall $\mathcal W_i$ of $\R_{+,k}$ with 
${\rm sign}_\eta(\mathcal W_i)=-1$. Then  $F_{\widehat{Q''}}=0$, the volumes of $Q''$ and $\widehat{Q''}$ are comparable and hence 
$$
\frac1{|Q''|}\int_{Q''}|f|\lesssim \frac1{|\widehat{Q''}|}\int_{\widehat{Q''}}|F|\le \|F\|_{\BMO(\R)}\lesssim \|F\|_{*, \BMO(\R)}.
$$

To prove the LHS in \eqref{abc'} note that  due to the symmetry reasons, estimating the mean values $|F-c|_Q$, the only cubes that matter are the ones 
crossed by at least one of the walls of $\R_{+,k}$ (for the remaining cubes these mean values are dominated by $m_{{\rm osc},\infty}(f)$). Let 
$Q=\prod_{j=1}^d(\a_j,\b_j)$ be such a cube and let $I$ denote the set of these $1\le i\le k$ such that $\mathcal W_i$ crosses $Q$, i.e. 
$\a_{n-k+i}<0<\b_{n-k+i}$. Again due to the symmetry reasons, we can assume that  for every $i\in I$, $\b_{n-k+i}\ge|\a_{n-k+i}|$. 
Assume first that for every $i\in I$ we have ${\rm sign}_\eta(\mathcal W_i)=1$ (this includes the case $\eta=\textbf{0}$).  
Let  $Q'$ be a smallest cube in  $\R_{+,k}$ that contains $Q^+=Q\cap \R_{+,k}$. Then the volumes of $Q$ and $Q'$ are comparable, and $Q'$ 
together with all the reflections of $Q'$ with respect to the walls  $\mathcal W_i$, $i\in I$, cover $Q$. Consequently we have
\begin{equation}\label{xyz}
\frac1{|Q|}\int_Q|F-c|\lesssim \frac1{|Q'|}\int_{Q'}|f-c|\le m_{{\rm osc},\infty}(f).
\end{equation}
In the complementary case, when ${\rm sign}_\eta(\mathcal W_i)=-1$ for some $i\in I$, denote $I^-=\{i\in I\colon {\rm sign}_\eta(\mathcal W_i)=-1\}$.  
Let $Q''\subset \R_{+,k}$ be a smallest cube containing $Q\cap\R_{+,k}$. Then $Q''$ is adjacent, the volumes of $Q$ and $Q''$ are comparable, and 
$Q''$ together with all its reflections with respect to  the walls  $\mathcal W_i$, $i\in I^-$, cover $Q$. Consequently,
\begin{equation}\label{xyz'}
\inf_{c\in \C}|F-c|_Q \le \frac1{|Q|}\int_Q|F|\lesssim \frac1{|Q''|}\int_{Q''}|f|\lesssim m^{(2)}_{{\rm mv},\infty}(f).
\end{equation}

\noindent	\textbf{(4)$\implies$(1)} Assuming that $F$ is such as in (4)  it suffices to check that $\calE^\eta f\in \BMO(\RR^d)$ together with a corresponding bound. It is convenient to single out the case $\eta=\textbf{1}$. Then we have to our disposal $F\in \BMO(\RR^d)$ supported in $\R_{+,k}$ and such that 
$F\big|_{\R_{+,k}}=f$. It is clear that  $\calE^{\eta}f=\sum_{g\in W(R_k)} F\circ g$ and hence $\calE^\eta f\in \BMO(\RR^d)$ with 
$\|f\|_{\BMO_\eta(\R)}=\|\calE^\eta f\|_{\BMO(\R)}\lesssim \|F\|_{\BMO(\R)}$.

Assume now that $\eta\neq\textbf{1}$. Let $1\le\eta_{i_1}<\ldots<\eta_{i_s}\le d$, $1\le s\le k$, be all the entries of $\eta$ which are equal to 0. 
We apply Lemma \ref{lm:3} (1) to $e_{i_1}$ and $F$ to obtain  $F_1\in \BMO(\RR^d)$ being the even extension of $F|_{\langle e_{i_1}\rangle_+^\bot}$. 
Then we continue this process to $e_{i_2}$ and $F_1$ to get  the even extension of $F_1|_{\langle e_{i_2}\rangle_+^\bot}$, and so on. 
The final function $F_s$ belongs to $\BMO(\RR^d)$ and $\|F_s\|_{*,\BMO(\RR^d)}\lesssim\|F\|_{*,\BMO(\RR^d)}$. 
Moreover, by the assumptions imposed  on $F$ and the way $F_s$ was constructed, we have $F_s=(\calE^\eta f)\ind_{\RR^{\eta,1}_+}$. 
In particular, $F_s$ is supported in $\RR^{\eta,1}$ and $F_s|_{\R_{+,k}}=f$. Thus, for $\eta=\textbf{0}$ we are done.
		
Notice that for $\eta={\bf 0}$ the above argument is valid modulo constants. Notably, this finishes the proof of the theorem in this case. Indeed, 
	(1) clearly implies (3) and (4), which are identical. Thus, in the next parts of the proof we focus on the case $\eta\neq {\bf 0}$.
	
To continue this part for $\textbf{0}\neq \eta\neq\textbf{1}$ we shall use an argument similar to that from the proof of 
 \textbf{(4)$\implies $(1)} in Theorem \ref{thm:1}. Namely, consider the complementary root (sub)systems of $R_k$, 
	$$
	R_{k,0}=\{\pm e_{d-k+i}\colon \eta_i=0\}\quad {\rm and} \quad R_{k,1}=\{\pm e_{d-k+i}\colon \eta_i=1\}. 
	$$
As the positive Weyl chambers corresponding to these root systems we choose $\mathbb R^{\eta,0}_+$ and $\mathbb R^{\eta,1}_+$, respectively. We have the natural 
identification of $W(R_k)$ with the direct product $W(R_{k,1})\oplus W(R_{k,0})$, and also $\eta$ can be viewed as the 'tensor product' of the corresponding homomorphisms on $W(R_{k,1})$ and $W(R_{k,0})$, namely $\eta=\eta^{(1)}\otimes\eta^{(0)}$, where $\eta^{(1)}=(1,\ldots,1)$ ($\#\{i\colon \eta_i=1\}$-times), and
$\eta^{(0)}=(0,\ldots,0)$ ($\#\{i\colon \eta_i=0\}$-times). Due to the product structure of $W(R_k)$ and the tensor product structure of $\eta$ it is easily 
seen that $\calE^{\eta}f=\calE^{\eta^{(1)}}(\calE^{\eta^{(0)}}f)$ for any $f$ supported in $\mathbb R_{+,k}$; notice that if $f$ is such a function, 
then $\calE^{\eta^{(0)}} f$ is supported  in $\mathbb R^{\eta,1}_+$ so that the composition makes sense.

To finish this part of the proof we argue as follows. The equality $F_s=(\calE^\eta f)\ind_{\RR^{\eta,1}_+}$ can be reinterpreted as $F_s=\calE^{\eta^{(0)}}f$ 
and hence $\calE^\eta f=\calE^{\eta^{(1)}}(\calE^{\eta^{(0)}}f)=\calE^{\eta^{(1)}} F_s$. But $F_s$ is supported in $\RR^{\eta,1}$ so
$$
\|f\|_{\BMO_\eta(\RR^d)}=\Vert(\calE^{\eta}f) \Vert_{\BMO(\RR^d)}=\Vert(\calE^{\eta^{(1)}}F_s) \Vert_{\BMO(\RR^d)}\lesssim\Vert F_s\Vert_{\BMO(\RR^d)}
\lesssim\Vert F\Vert_{\BMO(\RR^d)}
$$
with the implicit constant independent of $f$. 	
	
\noindent \textbf{(1)$\implies$(3)} Let $f\in \BMO_\eta(\R_{+,k})$. This means that $\calE^{\eta}f\in \BMO(\R)$ and clearly $\calE^{\eta}f$  is odd 
in each $\langle e_{d-k+i}\rangle^\bot$ such that $\eta_i=1$ (if any). Hence, applying Lemma \ref{lm:3} (2) consecutively to each such $e_{d-k+i}$ (if any) 
yields $\big(\calE^{\eta}f\big)\ind_{\RR^{\eta,1}_+}\in \BMO(\R)$. Consequently,  $F:=(\calE^\eta f)\ind_{\RR_+^{\eta,1}}$ has the required properties and 
	\begin{equation*}
	\Vert F\Vert_{\BMO(\RR^d)}\lesssim \Vert f\Vert_{\BMO_\eta(\R_{+,k})}
	\end{equation*}
follows with the implicit constant independent of $f$. 
	
\noindent	\textbf{(3)$\implies$(4)} This is obvious; clearly, the former inequality remains in force.
	
Finally, observe that the claimed equivalence of relevant quantities follows from the partial conclusions included in the proofs of the three implications.
\end{proof}

The local version of Theorem \ref{thm:2} is the following (note that for $\eta={\bf 0}$ the assumptions imposed on the support of $F$ are void and (3) and (4)  are identical). Again, observe a duality between conditions (3)  and (4) in Theorem \ref{thm:1(loc)}, and conditions (3) and (4)  below. We define
\begin{equation*}
M_1^{\rm loc}(f)=\sup_{Q'}\frac1{|Q'|}\int_{Q'}|f-f_{Q'}|, \qquad M_2^{\rm loc}(f)=\sup_{Q''}\frac1{|Q''|}\int_{Q''}|f|<\infty,
\end{equation*}
where  $Q',Q''$ are cubes included in $\R_{+,k}$ with the following restrictions: $l(Q')<1$ in the first supremum, and in the second supremum 
either $l(Q'')\ge1$ or $Q''$ is adjacent to at least one of the walls of $\R_{+,k}$ with the minus sign attached by $\eta$ and $l(Q'')<1$.
\begin{theorem}\label{thm:2(loc)}
Let $f\in L^1_{\mathrm{loc}}(\R_{+,k})$ and $\eta\in\Z_2^k$. The following conditions are equivalent:
	\begin{enumerate}
		\item $f\in {\rm bmo}_\eta(\R_{+,k})$;
		\item $M_1^{\rm loc}(f)+M_2^{\rm loc}(f)<\infty$;
		\item there exists $F\in {\rm bmo}(\RR^d)$, supported in $\RR^{\eta,1}_+$ and such that $F\big|_{\R_{+,k}}=f$;
		\item there exists $F\in {\rm bmo}(\RR^d)$, supported in  $(\RR^{\eta,0}_+\setminus \R_{+,k})^{\mathrm{c}}$ and such that $F\big|_{\R_{+,k}}=f$.
	\end{enumerate}
Moreover, $\Vert f\Vert_{{\rm bmo}_\eta(\R_{+,k})}\simeq M_1^{\rm loc}(f)+M_2^{\rm loc}(f)\simeq \inf_{F} \Vert F\Vert_{{\rm bmo}(\R)}$, where the 
infimum is taken over all $F$ as in (3) or in (4), respectively.
\end{theorem}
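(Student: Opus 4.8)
The plan is to mirror, in the local setting, the architecture of the proof of Theorem \ref{thm:2}, replacing Lemma \ref{lm:3} by its local counterpart (Remark \ref{rm:4}) and $\BMO$ by ${\rm bmo}$ throughout, with the extra care that ${\rm bmo}$ sees two scales (cubes with $l(Q)<1$ contribute oscillation, cubes with $l(Q)\ge1$ contribute mean value). First I would prove the equivalence \textbf{(1)$\Longleftrightarrow$(2)}. Writing $F=\calE^\eta f$, it suffices to show
\begin{equation*}
\|F\|_{*,{\rm bmo}(\R)}\lesssim m_{{\rm osc}}(f)+m_{{\rm mv}}^{(1)}(f)+m_{{\rm mv}}^{(2)}(f)\lesssim \|F\|_{*,{\rm bmo}(\R)},
\end{equation*}
with the three quantities on the middle being exactly $M_1^{\rm loc}(f)$ and the two pieces of $M_2^{\rm loc}(f)$. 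For the right-hand inequality one splits cubes $Q\subset\R_{+,k}$ by whether $l(Q)<1$ or $l(Q)\ge1$: small cubes are handled by $m_{\rm osc}(f)$ and small adjacent cubes by $m_{{\rm mv}}^{(2)}(f)$ exactly as in Theorem \ref{thm:2}, while for $l(Q)\ge1$ one reflects $Q$ into a slightly larger symmetric cube $\widehat Q$ with $F_{\widehat Q}=0$ (symmetrizing only in the walls with the minus sign), of comparable volume, and uses $|F|_{\widehat Q}\le\|F\|_{{\rm bmo}(\R)}$ since $l(\widehat Q)\ge1$; here Proposition \ref{pro:app2} lets one ignore the mismatch between $l(\widehat Q)$ and the breaking point $1$. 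For the left-hand inequality one only needs to control $|F-c|_Q$ for cubes $Q$ crossed by some wall of $\R_{+,k}$, again separating $l(Q)<1$ from $l(Q)\ge1$ and, within each, the subcase where all crossed walls carry $+$ (reflect to a cube $Q'\subset\R_{+,k}$, estimate by $m_{\rm osc}$ or $m_{{\rm mv}}^{(1)}$) from the subcase where some crossed wall carries $-$ (reflect to an adjacent cube $Q''\subset\R_{+,k}$, estimate $\inf_c|F-c|_Q\le|F|_Q\lesssim|F|_{Q''}$ by $m_{{\rm mv}}^{(2)}(f)$ or $m_{{\rm mv}}^{(1)}(f)$).

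Next I would prove \textbf{(4)$\implies$(1)}. The case $\eta=\textbf{1}$ is immediate: $F\in{\rm bmo}(\R)$ supported in $\R_{+,k}$ with $F|_{\R_{+,k}}=f$ gives $\calE^\eta f=\sum_{g\in W(R_k)}F\circ g\in{\rm bmo}(\R)$. For $\textbf{0}\neq\eta\neq\textbf{1}$ one first applies the local version of Lemma \ref{lm:3}(1) (Remark \ref{rm:4}) consecutively to each coordinate $e_{d-k+i}$ with $\eta_i=0$, starting from $F$, to obtain $F_s\in{\rm bmo}(\R)$ with $\|F_s\|_{*,{\rm bmo}(\R)}\lesssim\|F\|_{*,{\rm bmo}(\R)}$, $F_s=(\calE^\eta f)\ind_{\RR^{\eta,1}_+}$, and $F_s|_{\R_{+,k}}=f$; then, using the decomposition $W(R_k)=W(R_{k,1})\oplus W(R_{k,0})$ and $\eta=\eta^{(1)}\otimes\eta^{(0)}$ exactly as in Theorem \ref{thm:2}, one identifies $F_s=\calE^{\eta^{(0)}}f$ and hence $\calE^\eta f=\calE^{\eta^{(1)}}F_s$, a sum of reflections of the compactly-in-$\RR^{\eta,1}_+$-supported function $F_s$, so $\|f\|_{{\rm bmo}_\eta(\R_{+,k})}\lesssim\|F_s\|_{{\rm bmo}(\R)}\lesssim\|F\|_{{\rm bmo}(\R)}$. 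The case $\eta=\textbf{0}$ already follows from the first $s$ steps, since then (3) and (4) coincide and $F_s$ itself is the required extension.

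Finally \textbf{(1)$\implies$(3)}: if $f\in{\rm bmo}_\eta(\R_{+,k})$ then $\calE^\eta f\in{\rm bmo}(\R)$ is odd in every $\langle e_{d-k+i}\rangle^\bot$ with $\eta_i=1$, so applying the local version of Lemma \ref{lm:3}(2) (Remark \ref{rm:4}) successively to those coordinates yields $F:=(\calE^\eta f)\ind_{\RR^{\eta,1}_+}\in{\rm bmo}(\R)$, supported in $\RR^{\eta,1}_+$, with $F|_{\R_{+,k}}=f$ and $\|F\|_{{\rm bmo}(\R)}\lesssim\|f\|_{{\rm bmo}_\eta(\R_{+,k})}$; and \textbf{(3)$\implies$(4)} is trivial since $\RR^{\eta,1}_+\subset(\RR^{\eta,0}_+\setminus\R_{+,k})^{\mathrm c}$, with the norm estimate preserved. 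Collecting the constants from the four implications and from \textbf{(1)$\Longleftrightarrow$(2)} gives the claimed equivalences of norms. I expect the only genuinely delicate point to be the two-scale bookkeeping in \textbf{(1)$\Longleftrightarrow$(2)}: one must be sure that when a large crossing cube is replaced by a symmetric enlargement the resulting cube still counts as ``large'' (this is where Proposition \ref{pro:app2} is essential) and that a cube $Q''$ with $l(Q'')<1$ reflected from a small crossing cube is genuinely adjacent in the required sense; everything else is a routine transcription of the proofs of Theorems \ref{thm:1(loc)} and \ref{thm:2}.
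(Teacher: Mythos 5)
Your overall architecture is exactly that of the paper's proof: establish $(1)\Leftrightarrow(2)$ by comparing $\|\calE^\eta f\|_{*,{\rm bmo}(\RR^d)}$ with $m_{\rm osc}(f)+m_{\rm mv}^{(1)}(f)+m_{\rm mv}^{(2)}(f)$ via a two-scale, wall-sign case analysis, and then run $(4)\Rightarrow(1)\Rightarrow(3)\Rightarrow(4)$ using Remark~\ref{rm:4} in place of Lemma~\ref{lm:3} together with the tensor-product decomposition $\calE^\eta f=\calE^{\eta^{(1)}}(\calE^{\eta^{(0)}}f)$. That is the right route. However, the ``$\lesssim\|\calE^\eta f\|_{*,{\rm bmo}}$'' half of $(1)\Leftrightarrow(2)$ contains a misstatement: to bound $m_{\rm mv}^{(1)}(f)$ you propose to enlarge a cube $Q\subset\R_{+,k}$ with $l(Q)\ge1$ to a cube $\widehat Q$ symmetric across the minus-signed walls and use $F_{\widehat Q}=0$. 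This yields a cube of comparable volume only when $Q$ is already adjacent to such a wall; for a large cube deep inside $\R_{+,k}$ the symmetric hull is arbitrarily large and the step fails. No reflection is needed there: since $Q\subset\R_{+,k}$ and $\calE^\eta f|_{\R_{+,k}}=f$, one has $|f|_Q=|\calE^\eta f|_Q\le M_{\rm mv}(\calE^\eta f)$ directly. Symmetrization is required only for the small adjacent cubes entering $m_{\rm mv}^{(2)}(f)$, and there the enlarged cube may straddle the cutoff $l=1$, which is harmless since ${\rm bmo}$ controls both regimes.

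Relatedly, you locate the ``essential'' use of Proposition~\ref{pro:app2} on the wrong side of the equivalence. It is not needed for large cubes (an enlargement of a cube with $l(Q)\ge1$ trivially still has sidelength $\ge1$); it enters in the opposite direction, when a small crossing cube $Q$ with $l(Q)<1$ is reflected inward to a cube $Q'\subset\R_{+,k}$ whose sidelength may land in $[1,2)$, so one must pass through $m_{{\rm osc},2}(f)$ and convert back to $m_{\rm osc}(f)+m_{\rm mv}^{(1)}(f)$, and also inside Remark~\ref{rm:4} when adapting Lemma~\ref{lm:3}(2) to ${\rm bmo}$. With these two local repairs your plan coincides with the paper's proof.
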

\begin{proof}
The proof is similar to that of Theorem \ref{thm:2} so we only provide some comments (skipping the implication \textbf{(3)$\implies$(4)} which is obvious). 

\textbf{(1)$\Longleftrightarrow$(2)} Similarly to the above notation, for a suitable function $F$ on $\R$ define
$$
M_{\rm osc}(F)=\sup_{l(Q)<1}\inf_{c\in \C}|F-c|_Q \quad {\rm and} \quad M_{\rm mv}(F)=\sup_{l(Q)\ge 1}|F|_Q;
$$
here $Q$ run over the collections of all cubes in $\R$ satisfying  the corresponding length restrictions. By definition, 
$\|F\|_{*, {\rm bmo}(\R)}=M_{\rm osc}(F)+M_{\rm mv}(F)$. Now, let $F=\calE^\eta f$ for $f\in L^1_{\mathrm{loc}}(\R_{+,k})$. We are reduced to verifying that
\begin{equation}\label{abc''}
M_{\rm osc}(F)+M_{\rm mv}(F)\lesssim m_{\rm osc}(f)+m_{\rm mv}(f)\lesssim M_{\rm osc}(F)+M_{\rm mv}(F).
\end{equation}
We begin with comments on the RHS of \eqref{abc''}. Obviously, $m_{\rm osc}(f)\le M_{\rm osc}(F)$ and $m^{(1)}_{\rm mv}(f)\le M_{\rm mv}(F)$, 
hence it suffices only to check that  $m^{(2)}_{\rm mv}(f)\lesssim M_{\rm osc}(F)+M_{\rm mv}(F)$, additionally assuming  that $\eta\neq \textbf{0}$.
Let $Q''\subset \R_{+,k}$ be adjacent with $l(Q'')<1$, and let $\widehat{Q''}$ be a smallest cube in $\R$ containing  $Q''$ and symmetric with 
respect to any wall $\mathcal W_i$ of $\R_{+,k}$ with ${\rm sign}_\eta(\mathcal W_i)=-1$. Then  the volumes of $Q''$ and $\widehat{Q''}$ are comparable 
and $F_{\widehat{Q''}}=0$. Hence 
$$
\frac1{|Q''|}\int_{Q''}|f|\lesssim \frac1{|\widehat{Q''}|}\int_{\widehat{Q''}}|F|\le \|F\|_{{\rm bmo}(\R)}\lesssim \|F\|_{*,{\rm bmo}(\R)}.
$$

To prove the LHS in \eqref{abc''}, estimating the mean values $|F-c|_Q$ when $l(Q)<1$ and $|F|_Q$ when $l(Q)\ge1$, again only the cubes  
crossed by at least one of the walls of $\R_{+,k}$ matter. We proceed as in the proof of the LHS in \eqref{abc'}, keeping the same notation and 
applying the appropriate reducing observations but additionally we split the reasoning to the cases  $l(Q)<1$ and $l(Q)\ge1$. 

Assume first that $l(Q)<1$. Then, in the case when for every $i\in I$ we have ${\rm sign}_\eta(\mathcal W_i)=1$, reaching \eqref{xyz} the only change 
to be done is replacement of $m_{{\rm osc},\infty}(f)$ by $m_{{\rm osc},2}(f)$ (this is since $l(Q')<2$). In the complementary case, when 
${\rm sign}_\eta(\mathcal W_i)=-1$ for some $i\in I$, reaching \eqref{xyz'} the change to be done is replacement of $m^{(2)}_{{\rm mv},\infty}(f)$ by 
$m^{(2)}_{{\rm mv}}(f)$ ($l(Q'')<l(Q)$). Concluding, we have
$$
M_{\rm osc}(F)\lesssim m_{{\rm osc},2}(f)+m^{(2)}_{{\rm mv}}(f)\lesssim m_{{\rm osc}}(f)+m_{{\rm mv}}(f)
$$
(we used the fact that $m_{{\rm osc},2}(f)\lesssim m_{{\rm osc}}(f)+m^{(1)}_{{\rm mv}}(f)$; see the proof of Proposition \ref{pro:app2}).

Assume now that $l(Q)\ge1$. Taking $c=0$ in \eqref{xyz} we can replace $m_{{\rm osc},\infty}(f)$ by $m^{(1)}_{{\rm mv}}(f)$. Similarly, 
disregarding $\inf_{c\in \C}|F-c|_Q$ and replacing $m^{(2)}_{{\rm mv},\infty}(f)$ by $m^{(1)}_{{\rm mv}}(f)$ in \eqref{xyz'}, and then 
taking the supremum over all $Q$, $l(Q)\ge1$, leads to
$$
M_{\rm mv}(F)\lesssim m^{(1)}_{{\rm mv}}(f).
$$
The proof of the LHS in \eqref{abc''} is completed.

\textbf{(4)$\implies$(1)} To check that $\calE^\eta f\in {\rm bmo}(\RR^d)$ together with a relevant bound, only cosmetic changes are needed in the case 
$\eta=\textbf{1}$. Treating the case $\eta\neq\textbf{1}$ we apply Remark \ref{rm:4} in place of Lemma \ref{lm:3} and follow the procedure described in the proof 
of \textbf{(4) $\implies$(1)}, Theorem \ref{thm:2}; this is sufficient to close the case $\eta=\textbf{0}$. Finally, the case $\textbf{0}\neq\eta\neq\textbf{1}$ 
also needs only cosmetic changes.

\textbf{(1)$\implies$(3)} The argument for this implication copies that of \textbf{(1)$\implies$(3)} in the proof of Theorem \ref{thm:2} with obvious changes.
\end{proof}

As a direct consequence of Theorems \ref{thm:2} and \ref{thm:2(loc)}, by using the equivalences \textbf{(1)$\Longleftrightarrow$(3)}, we obtain the following.
\begin{corollary} \label{rem:com_bmo}
For the Weyl chamber $\R_{+,k}$ corresponding to the root system $R_k$ in $\R$, we have
$$
\BMO_{\textbf{0}}(\R_{+,k})=\BMO_{r}(\R_{+,k})\quad {\rm and}\quad \BMO_{\textbf{1}}(\R_{+,k})=\BMO_{z}(\R_{+,k}),
$$ 
with equivalence of norms. In particular, it follows that $\BMO_{\textbf{1}}(\R_{+,k})\hookrightarrow\BMO_{\textbf{0}}(\R_{+,k})$ with  strict inclusion. 
Analogous statement holds for local ${\rm bmo}$ spaces.
\end{corollary}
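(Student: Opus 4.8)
The plan is to read both identifications off directly from the equivalences $(1)\Longleftrightarrow(3)$ in Theorems \ref{thm:2} and \ref{thm:2(loc)}, after specializing the region $\RR^{\eta,1}_+$ to the two extreme homomorphisms; recall the conventions $\RR^{\textbf{0},1}_+=\R$ and $\RR^{\textbf{1},1}_+=\R_{+,k}$ fixed in Section \ref{sec:at}.

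First I would take $\eta=\textbf{0}$. Then the support restriction in condition (3) of Theorem \ref{thm:2} is vacuous, so $(1)\Longleftrightarrow(3)$ says precisely that a class $f$ (considered modulo constants, as is mandatory for $\eta=\textbf{0}$) lies in $\BMO_{\textbf{0}}(\R_{+,k})$ if and only if it is the restriction to $\R_{+,k}$ of some $F\in\BMO(\R)$, which is exactly the defining description of $\BMO_{r}(\R_{+,k})$; and the quantitative part $\Vert f\Vert_{\BMO_{\textbf{0}}(\R_{+,k})}\simeq\inf_F\Vert F\Vert_{\BMO(\R)}$ of Theorem \ref{thm:2} is precisely the definition of $\Vert f\Vert_{\BMO_{r}(\R_{+,k})}$, giving the equivalence of norms. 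Next I would take $\eta=\textbf{1}$, where the region in (3) is $\R_{+,k}$ itself, so the only $F\in\BMO(\R)$ supported in $\R_{+,k}$ with $F\big|_{\R_{+,k}}=f$ is the extension of $f$ by zero outside $\R_{+,k}$; thus $(1)\Longleftrightarrow(3)$ reads: $f\in\BMO_{\textbf{1}}(\R_{+,k})$ iff this zero-extension lies in $\BMO(\R)$, that is $\BMO_{\textbf{1}}(\R_{+,k})=\BMO_{z}(\R_{+,k})$, and since the infimum in Theorem \ref{thm:2} now runs over a single function (the zero-extension) the two norms agree up to the implicit constants. One keeps in mind here that, by the conventions of Section \ref{sec:BMO}, both $\BMO_{\textbf{1}}$ and $\BMO_{z}$ are spaces of genuine functions, so there is no clash. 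The two local identities ${\rm bmo}_{\textbf{0}}(\R_{+,k})={\rm bmo}_{r}(\R_{+,k})$ and ${\rm bmo}_{\textbf{1}}(\R_{+,k})={\rm bmo}_{z}(\R_{+,k})$ follow verbatim from $(1)\Longleftrightarrow(3)$ in Theorem \ref{thm:2(loc)} with the same two choices of $\eta$, this time without any modulo-constants bookkeeping.

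The continuous embedding $\BMO_{\textbf{1}}(\R_{+,k})\hookrightarrow\BMO_{\textbf{0}}(\R_{+,k})$ is then nothing but the inclusion $\BMO_{z}(C_+)\hookrightarrow\BMO_{r}(C_+)$ recorded in Section \ref{sec:BMO} (and ${\rm bmo}_{z}(C_+)\hookrightarrow_1{\rm bmo}_{r}(C_+)$ in the local case). For strictness I would invoke Proposition \ref{prop:diff}: since $\BMO_{\textbf{1}}(\R_{+,k})$ does embed in $\BMO_{\textbf{0}}(\R_{+,k})$, the fact that the two spaces are distinct forces the inclusion to be proper; alternatively one may name a witness, for instance the class of $F(x)=\log|x_d|$ restricted to $\R_{+,k}$, which lies in $\BMO_{r}(\R_{+,k})=\BMO_{\textbf{0}}(\R_{+,k})$ yet has no representative $f+c$ whose zero-extension belongs to $\BMO(\R)$ — the mean oscillation of $(\log x_d+c)\ind_{\{x_d>0\}}$ over small cubes straddling the wall $\langle e_d\rangle^\bot$ blows up logarithmically — and truncating to a bounded cube settles the local case.

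I do not expect a genuine obstacle here: the whole content is the extraction of the two limiting cases of Theorems \ref{thm:2} and \ref{thm:2(loc)}. The only steps needing a little care are the bookkeeping of the modulo-constants convention when $\eta=\textbf{0}$ and the observation that for $\eta=\textbf{1}$ the extension in condition (3) is forced to be the zero-extension, so that the infimum defining the norm collapses to a single value.
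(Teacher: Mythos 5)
Your proposal is correct and coincides with the paper's own (one-line) justification, which simply invokes the equivalences $(1)\Longleftrightarrow(3)$ of Theorems \ref{thm:2} and \ref{thm:2(loc)} specialized to $\eta=\textbf{0}$ and $\eta=\textbf{1}$. You also correctly handle the two small bookkeeping points — that the $\eta=\textbf{0}$ case is read modulo constants while $\eta=\textbf{1}$ concerns genuine functions, and that for $\eta=\textbf{1}$ the support constraint collapses the infimum defining the norm to the single zero-extension — and your strictness argument (either via Proposition \ref{prop:diff}, or via the explicit witness $\log|x_d|$) is sound.
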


\subsection{Duality} \label{sub:dual}
We now state and prove duality results for $\eta$-Hardy spaces in the context of orthogonal root systems. Notice that by Theorem \ref{thm:1} the vector 
space of (finite) linear combinations of $(\eta,A)$-atoms and  $(\eta,B)$-atoms is dense in $H^1_\eta(\R_{+,k})$.
\begin{theorem} \label{thm:3} 
Let $\eta\in\Z_2^k$. The dual of $H^1_\eta(\R_{+,k})$ is $\BMO_\eta(\R_{+,k})$. More precisely, for any $b\in \BMO_\eta(\R_{+,k})$ the formula
	\begin{equation}\label{eq:6}
	L_b(f)=\int_{\R_{+,k}} bf 
	\end{equation}
defined initially on the vector space of linear combinations of ($\eta$,A/B)-atoms, and then uniquely extended to the whole $H^1_\eta(\R_{+,k})$, 
gives a bounded linear functional on $H^1_\eta(\R_{+,k})$. Conversely, every element of $H^1_\eta(\R_{+,k})'$ is of this form. Moreover, 
$\Vert L_b\Vert\simeq \Vert b\Vert_{\BMO_\eta(\R_{+,k})}$.
\end{theorem}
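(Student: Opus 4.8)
The plan is to deduce the duality $H^1_\eta(\R_{+,k})' = \BMO_\eta(\R_{+,k})$ from the classical duality $H^1(\R)' = \BMO(\R)$ by transporting it through the $\eta$-extension operator, exactly as the spaces themselves are defined. Throughout I write $\calE = \calE^\eta$ and $\calA = \A$ for brevity, and I recall the basic identity \eqref{us}, namely $\int_{\R} \calE f \cdot F = |W| \int_{\R_{+,k}} f \cdot (\calA F)|_{\R_{+,k}}$, which links pairings on $\R$ with pairings on $\R_{+,k}$.

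\textbf{Step 1 (well-definedness of $L_b$).} Fix $b \in \BMO_\eta(\R_{+,k})$. For a finite linear combination $f = \sum (\lambda_i a_i + \mu_i b_i)$ of $(\eta,A/B)$-atoms I must check that $L_b(f) = \int_{\R_{+,k}} bf$ is absolutely convergent and that $|L_b(f)| \lesssim \|b\|_{\BMO_\eta} \sum(|\lambda_i| + |\mu_i|)$. By linearity it suffices to bound $|\int_{\R_{+,k}} b\,a|$ for $a$ an $(\eta,A)$-atom and $|\int_{\R_{+,k}} b\,b_0|$ for $b_0$ an $(\eta,B)$-atom, each uniformly. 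The clean way is to pass to $\R$: using \eqref{us} with $F = \calE b$ (legitimate since $\calE b \in \BMO(\R) \subset L^1_{\rm loc}$ and atoms are bounded with bounded support), $\int_{\R_{+,k}} b\,a = \frac{1}{|W|}\int_{\R} \calE b \cdot \calE a$. By Theorem \ref{thm:1}, $\calE a$ is (a bounded multiple of) a classical $H^1(\R)$ function with $\|\calE a\|_{H^1(\R)} \lesssim 1$, and likewise $\|\calE b_0\|_{H^1(\R)} \lesssim 1$; hence $|\int_{\R} \calE b \cdot \calE a| \lesssim \|\calE b\|_{\BMO(\R)} = \|b\|_{\BMO_\eta(\R_{+,k})}$ by the classical $H^1$--$\BMO$ pairing estimate. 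Taking the infimum over atomic decompositions and invoking density of finite atomic combinations (noted just before the theorem) gives a bounded functional $L_b$ on $H^1_\eta(\R_{+,k})$ with $\|L_b\| \lesssim \|b\|_{\BMO_\eta(\R_{+,k})}$.

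\textbf{Step 2 (every functional arises this way).} Let $L \in H^1_\eta(\R_{+,k})'$. Transport it to $\R$: define $\widetilde L$ on the subspace $H^1(\R)_\eta = \calE(H^1_\eta(\R_{+,k}))$ of $H^1(\R)$ by $\widetilde L(\calE f) = L(f)$; since $\|\calE f\|_{H^1(\R)} = \|f\|_{H^1_\eta(\R_{+,k})}$ (the defining norm equality), $\widetilde L$ is bounded on the closed subspace $H^1(\R)_\eta$ with the same norm. By Hahn--Banach extend it to a functional on all of $H^1(\R)$ without increasing the norm; by the classical duality this extension is given by integration against some $\beta \in \BMO(\R)$ with $\|\beta\|_{\BMO(\R)} \lesssim \|L\|$. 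Now symmetrize: set $b := (\calA \beta)|_{\R_{+,k}}$, equivalently replace $\beta$ by $\calA\beta$, which is $\eta$-symmetric and still in $\BMO(\R)$ with comparable norm (because $\calA$ is an average over $W$ of isometries of $\BMO(\R)$ composed with the signs $\eta(g) = \pm 1$, hence bounded on $\BMO(\R)$; here one should note $\calA$ acts on $\BMO$ modulo constants but $\eta$-symmetry pins down a genuine representative when $\eta \neq {\bf 0}$, and for $\eta = {\bf 0}$ everything is understood modulo constants). Then $b \in \BMO_\eta(\R_{+,k})$, and for any atomic $f$, using \eqref{us} and the $\eta$-symmetry of $\calE f$,
\begin{equation*}
L(f) = \widetilde L(\calE f) = \int_{\R} \calE f \cdot \beta = \int_{\R} \calE f \cdot \calA\beta = |W| \int_{\R_{+,k}} f \cdot b = |W|\, L_{b/|W|}(f),
\end{equation*}
so after absorbing the constant $|W|$ into $b$ we get $L = L_b$ on a dense set, hence everywhere, with $\|b\|_{\BMO_\eta(\R_{+,k})} \lesssim \|L\|$. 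Combined with Step 1 this yields $\|L_b\| \simeq \|b\|_{\BMO_\eta(\R_{+,k})}$.

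\textbf{Main obstacle.} The delicate point is the interaction between the ``modulo constants vs.\ genuine functions'' dichotomy (emphasized repeatedly in Section \ref{sec:BMO}) and the use of Hahn--Banach and of $\calA$ on $\BMO$. One must be careful that: (i) when $\eta \neq {\bf 0}$, $\calE \ind_{\R_{+,k}}$ is \emph{not} constant, so the pairing $\int_{\R_{+,k}} bf$ is insensitive to adding constants to $b$ only in a restricted sense, and the functional $L_b$ genuinely depends on the $\eta$-symmetric representative — the symmetrization step $\beta \mapsto \calA\beta$ is exactly what selects it; (ii) when $\eta = {\bf 0}$, $H^1_{\bf 0}(\R_{+,k}) = H^1_z(\R_{+,k})$ consists of functions with $\int_{\R_{+,k}} f = 0$ (as noted in Subsection \ref{sub:Hp}), so $L_b$ is well-defined on it for $b$ \emph{modulo constants}, matching $\BMO_{\bf 0}(\R_{+,k})$ being a space modulo constants, and the whole argument must be read modulo constants consistently. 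Handling this bookkeeping cleanly — rather than any hard analysis — is where the real work lies; the analytic content is entirely borrowed from the classical $H^1(\R)$--$\BMO(\R)$ duality and from Theorem \ref{thm:1}.
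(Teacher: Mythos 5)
Your proof is correct, and it follows a genuinely different route from the one in the paper. In the forward direction the paper does not use the identity \eqref{us} at all: instead it selects, via Theorems~\ref{thm:1}(3) and~\ref{thm:2}(3), the particular extensions $F=(\calE^\eta f)\ind_{\RR^{\eta,0}_+}$ and $B$ supported in $\RR^{\eta,1}_+$, so that $\operatorname{supp}F\cap\operatorname{supp}B\subset\R_{+,k}$ and the two pairings are \emph{literally} equal, $\int_{\R_{+,k}}bf=\int_{\R}BF$; the estimate then comes from the classical $H^1$--$\BMO$ duality applied to $F$ and $B$ and from taking infima over both. You instead stay with the full $\eta$-extensions $\calE^\eta a$, $\calE^\eta b$ and let \eqref{us} do the bookkeeping; this is softer and avoids invoking Theorem~\ref{thm:2} in the forward direction, but it gives the same bound. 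In the converse direction the divergence is more pronounced. The paper works with the subspace $X\subset H^1(\R)$ of functions supported in $(\RR^{\eta,1}_+\setminus\R_{+,k})^c$, quotes $X'\cong\BMO_r\big((\RR^{\eta,1}_+\setminus\R_{+,k})^c\big)$ (from the theory of strongly Lipschitz domains or abstractly from closed-subspace duality), and then makes a localization argument: the functional $\varphi(F)=L(F|_{\R_{+,k}})$ vanishes on $F$ with $F|_{\R_{+,k}}=0$, which forces the representing $B\in Y$ to be constant on $(\RR^{\eta,1}_+)^c$, and choosing the representative vanishing there produces $b=\widetilde B|_{\R_{+,k}}\in\BMO_\eta$ via Theorem~\ref{thm:2}(3). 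You work with a different closed subspace, $\calE(H^1_\eta)\subset H^1(\R)$, use Hahn--Banach plus the classical $H^1(\R)'=\BMO(\R)$ to get a representing $\beta$, and then symmetrize $\beta\mapsto\calA\beta$; the boundedness of $\calA$ on $\BMO(\R)$ and the identity $\int\calE f\cdot\beta=\int\calE f\cdot\calA\beta$ (both via \eqref{us}) carry the argument. Your approach makes the role of the group symmetry explicit and is more portable (it does not rely on the support-localization characterizations specific to orthogonal root systems, except for the density of atomic combinations), whereas the paper's approach keeps everything inside the framework of Theorems~\ref{thm:1} and~\ref{thm:2} and does not need Hahn--Banach. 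Your handling of the modulo-constants issue is also correct: for $\eta\neq\mathbf{0}$ one has $\sum_{g\in W}\eta(g)=0$, so $\calA$ kills constants and $\calA\beta$ is a genuine function independent of the representative of $[\beta]$, exactly as required for $\BMO_\eta$ in that case.
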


\begin{proof}
First we note that \eqref{eq:6} is a correct definition: if $f=\sum_{j=1}^n(\a_j a_j+\b_j b_j)$, where each $a_j/b_j$ is an $(\eta,A/B)$-atom supported in 
a cube in $\R_{+,k}$, then  $\int_{\R_{+,k}} |bf|<\infty$. This is because $a_j$ and $b_j$ are square integrable and $b\in L^2_{\rm loc}(\R_{+,k})$ (since 
$\calE^\eta b\in\BMO(\R)$, as a consequence of the John-Nirenberg inequality one has $\calE^\eta b\in L^2_{\rm loc}(\R)$ and hence $ b\in L^2_{\rm loc}(\R_{+,k})$).

Now, fix $b\in\BMO_\eta(\R_{+,k})$. By Theorem \ref{thm:2} there exists $B\in\BMO(\R)$, supported in $\RR_+^{\eta,1}$ and such that $B\big\vert_{\R_{+,k}}=b$. 
Let $f\in H^1_\eta(\R_{+,k})$ be a   linear combination of ($\eta,A/B)$-atoms. By Theorem \ref{thm:1} there exists $F\in H^1(\R)$, supported in $\RR_+^{\eta,0}$ 
and such that $F\big\vert_{\R_{+,k}}=f$.	In fact, see the proof of Theorem \ref{thm:1}, we can select $F=(\calE^\eta f)\ind_{\RR^{\eta,0}_+}$ which has the 
required properties, and moreover satisfies $\Vert F\Vert_{H^1(\R)} \simeq  \Vert f\Vert_{H^1_\eta(\R_{+,k})}$. Notably, $F$ is a   linear combination of 
(classical) atoms supported in $\R_{+,k}$. By the localization of the supports of $B$ and $F$, and by the duality between $H^1(\R)$ and $\BMO(\R)$ we obtain
	\begin{equation*}
	|L_b(f)| =\Big|\int_{\R_{+,k}} bf\Big| = \Big|\int_{\R} BF\Big|\leq  \Vert B\Vert_{\BMO(\R)}\Vert F\Vert_{H^1(\R)}.
	\end{equation*}
Taking the infimum over $B$ and $F$ gives
	$$
	|L_b(f)| \lesssim  \Vert b\Vert_{\BMO_\eta(\R_{+,k})}\Vert f\Vert_{H^1_\eta(\R_{+,k})},
	$$
and this proves that $L_b\in H^1(\R_{+,k})'$ and $\Vert L_b\Vert\lesssim \Vert b\Vert_{\BMO_\eta(\R_{+,k})}$.
	
Conversely, fix $L\in H^1_\eta(\R_{+,k})'$. Let $X$ be a subspace of $H^1(\R)$ composed of the functions vanishing in $\RR^{\eta,1}\setminus \R_{+,k}$. 
Notice that by Theorem \ref{thm:1}, if $F\in X$, then $F\big\vert_{\R_{+,k}} \in H^1_\eta(\R_{+,k})$, and 
$\Vert F\vert_{\R_{+,k}}\Vert_{H^1_\eta(\R_{+,k})}\lesssim \Vert F\Vert_{H^1(\R)}$.  

The dual space of $X$ is isometrically isomorphic to $Y:=\BMO_r((\RR^{\eta,1}\setminus \R_{+,k})^c)$, namely the space of functions (modulo constants) from 
$\BMO(\R)$ restricted to $(\RR^{\eta,1}\setminus \R_{+,k})^c$ with the usual norm: $\Vert b\Vert_Y =\inf_B \Vert B\Vert_{\BMO(\R)}$, where the infimum is taken 
over all extensions $B\in \BMO(\R)$ of $b$ to $\R$. The identification between the functionals from $X'$ and the elements of $Y$ is analogous to \eqref{eq:6}. 
This fact follows from the classical theory of Banach spaces (see \cite[Theorem 10.1, p.\,88]{Co}), because $X$ is a closed subspace of $H^1(\R)$, but also 
from \cite{ART,C}, since $(\RR^{\eta,1}\setminus \R_{+,k})^c$ is a strongly Lipschitz domain in $\R$  (to be precise, we should consider  the interior of 
this set just to have the set open, but this is irrelevant).
	
Let us now define $\varphi:X\to\C$ by	$\varphi(F)=L\big(F\vert_{\R_{+,k}}\big)$. Notice that $\varphi\in X'$ and $\Vert \varphi\Vert \leq \Vert L\Vert$. 
Indeed, this follows from the bound
$$
|\varphi(F)|\leq \Vert L\Vert \big\| F\vert_{\R_{+,k}}\big\|_{H^1_\eta(\R_{+,k})} \lesssim \Vert L\Vert \Vert F\Vert_{H^1(\R)}.
$$
Hence, there exists (a function modulo constants) $B\in Y$ such that
	\begin{equation*}
	\varphi(F) = \int_{\R} FB
	\end{equation*}
for any $F$ being a linear combination of atoms supported in $(\RR^{\eta,1}\setminus \R_{+,k})^c$, and $\Vert \varphi\Vert=\Vert B\Vert_{\BMO(\R)}$.
	
At this point it is appropriate to consider the case $\eta={\bf 0}$ separately. Notice that in this situation $(\RR^{\eta,1}_+\setminus \R_{+,k})^c$ is simply equal to $\R_{+,k}$ (to be precise, equal to $\overline{\R_{+,k}}$, but this is irrelevant). Thus, $Y=\BMO_r(\R_{+,k})$, and the proof of the duality is completed.
	
From now on we assume that $\eta\neq{\bf 0}$. By the definition of $\varphi$, if $F|_{\R_{+,k}}=0$, then $\varphi(F)=0$. Thus, $B$ is constant in 
$(\RR^{\eta,1}_+)^c$. We choose a representative $\widetilde{B}$ from the abstract class of $B$ which vanishes in $(\RR^{\eta,1}_+)^c$. Then, for any 
$f\in H^1_\eta(\R_{+,k})$ and for its any extension $F\in X$ we have
$$
\varphi(F)= \int_{\R_{+,k}} f \widetilde{B}|_{\R_{+,k}} 
$$ 
and $\Vert \varphi\Vert =\inf_{B} \Vert B\Vert_{\BMO(\R)}$, where the infimum is taken over the functions $B\in\BMO(\R)$ such that 
$B|_{\R_{+,k}}=\widetilde{B}|_{\R_{+,k}}$ and $B$ is supported in $\RR^{\eta,1}_+$. By Theorem \ref{thm:2}, $\widetilde{B}|_{\R_{+,k}}\in\BMO_\eta(\R_{+,k})$ 
and $\Vert \varphi\Vert\simeq \Vert \widetilde{B}|_{\R_{+,k}}\Vert_{\BMO_\eta(\R_{+,k})}$. 

This concludes the proof. 
\end{proof}

The local version of Theorem \ref{thm:3} is the following.
\begin{theorem} \label{thm:3(loc)} 
Let $\eta\in\Z_2^k$. The dual of $h^1_\eta(\R_{+,k})$ is ${\rm bmo}_\eta(\R_{+,k})$. More precisely, for any $b\in {\rm bmo}_\eta(\R_{+,k})$ the formula
	\begin{equation*}
	L_b(f)=\int_{\R_{+,k}} bf 
	\end{equation*}
defined initially on the vector space of linear combinations of local ($\eta$,A/B)-atoms, and then uniquely extended to the whole 
$h^1_\eta(\R_{+,k})$, gives a bounded linear functional on $h^1_\eta(\R_{+,k})$. Conversely, every element of $h^1_\eta(\R_{+,k})'$ is of this form. Moreover, 
$\Vert L_b\Vert\simeq \Vert b\Vert_{{\rm bmo}_\eta(\R_{+,k})}$.
\end{theorem}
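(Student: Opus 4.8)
\textbf{Proof proposal for Theorem \ref{thm:3(loc)}.}

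The plan is to mimic the proof of Theorem \ref{thm:3}, replacing everywhere the global objects by their local counterparts and invoking the local versions of the supporting results. First I would check that $L_b$ in \eqref{eq:6} is well defined on finite linear combinations of local $(\eta,A/B)$-atoms: since $\calE^\eta b\in{\rm bmo}(\R)\hookrightarrow\BMO(\R)$, the John--Nirenberg inequality gives $\calE^\eta b\in L^2_{\rm loc}(\R)$, hence $b\in L^2_{\rm loc}(\R_{+,k})$; as local atoms are square integrable with bounded support, the integral $\int_{\R_{+,k}}bf$ converges absolutely. By Theorem \ref{thm:1(loc)} such finite combinations are dense in $h^1_\eta(\R_{+,k})$, so once boundedness of $L_b$ is established on them, the functional extends uniquely.

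For the estimate $\Vert L_b\Vert\lesssim\Vert b\Vert_{{\rm bmo}_\eta(\R_{+,k})}$ I would fix $b\in{\rm bmo}_\eta(\R_{+,k})$ and, by the equivalence \textbf{(1)$\Leftrightarrow$(3)} in Theorem \ref{thm:2(loc)}, choose $B\in{\rm bmo}(\R)$ supported in $\RR^{\eta,1}_+$ with $B|_{\R_{+,k}}=b$ and $\Vert B\Vert_{{\rm bmo}(\R)}\lesssim\Vert b\Vert_{{\rm bmo}_\eta(\R_{+,k})}$. Given a finite combination $f$ of local $(\eta,A/B)$-atoms, Theorem \ref{thm:1(loc)} (with the selection $F=(\calE^\eta f)\ind_{\RR^{\eta,0}_+}$, as in the proof of Theorem \ref{thm:1}) yields $F\in h^1(\R)$ supported in $\RR^{\eta,0}_+$ with $F|_{\R_{+,k}}=f$ and $\Vert F\Vert_{h^1(\R)}\simeq\Vert f\Vert_{h^1_\eta(\R_{+,k})}$. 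Since $\RR^{\eta,0}_+\cap\RR^{\eta,1}_+=\R_{+,k}$, the supports force $\int_{\R_{+,k}}bf=\int_\R BF$, and the classical local duality $(h^1(\R))'={\rm bmo}(\R)$ (as in \cite{G} or \cite{St}) gives $|L_b(f)|=|\int_\R BF|\lesssim\Vert B\Vert_{{\rm bmo}(\R)}\Vert F\Vert_{h^1(\R)}\lesssim\Vert b\Vert_{{\rm bmo}_\eta(\R_{+,k})}\Vert f\Vert_{h^1_\eta(\R_{+,k})}$.

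For the converse I would again follow the template of Theorem \ref{thm:3}. Given $L\in h^1_\eta(\R_{+,k})'$, let $X$ be the closed subspace of $h^1(\R)$ of functions vanishing on $\RR^{\eta,1}_+\setminus\R_{+,k}$; by Theorem \ref{thm:1(loc)} restriction to $\R_{+,k}$ maps $X$ boundedly into $h^1_\eta(\R_{+,k})$, so $\varphi(F):=L(F|_{\R_{+,k}})$ defines an element of $X'$ with $\Vert\varphi\Vert\le\Vert L\Vert$. Its dual is realized by ${\rm bmo}_r\big((\RR^{\eta,1}_+\setminus\R_{+,k})^c\big)$, either via the Hahn--Banach/quotient argument (\cite[Theorem 10.1, p.\,88]{Co}) since $X$ is closed in $h^1(\R)$, or via the local analogue of \cite{ART,C} as $(\RR^{\eta,1}_+\setminus\R_{+,k})^c$ is strongly Lipschitz; this produces $B\in{\rm bmo}(\R)$ (modulo constants) with $\varphi(F)=\int_\R FB$ for $F$ a finite atomic combination in $X$ and $\Vert\varphi\Vert\simeq\Vert B\Vert_{{\rm bmo}_r((\RR^{\eta,1}_+\setminus\R_{+,k})^c)}$. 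For $\eta={\bf 0}$ the set $(\RR^{{\bf 0},1}_+\setminus\R_{+,k})^c=\R_{+,k}$ and we are done directly. For $\eta\neq{\bf 0}$, since $F|_{\R_{+,k}}=0$ forces $\varphi(F)=0$, one picks the representative $\widetilde B$ vanishing on $(\RR^{\eta,1}_+)^c$; then $L(f)=\int_{\R_{+,k}}f\,\widetilde B|_{\R_{+,k}}$ and by Theorem \ref{thm:2(loc)}, $\widetilde B|_{\R_{+,k}}\in{\rm bmo}_\eta(\R_{+,k})$ with $\Vert L\Vert\gtrsim\Vert\widetilde B|_{\R_{+,k}}\Vert_{{\rm bmo}_\eta(\R_{+,k})}$, matching the functional $L_b$ with $b=\widetilde B|_{\R_{+,k}}$.

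The main obstacle, as in the global case, is the converse direction: one must be careful that the abstract dual $X'$ is genuinely identified with a ${\rm bmo}$-type restriction space and that the representing function can be chosen supported in $\RR^{\eta,1}_+$ so that Theorem \ref{thm:2(loc)} applies to bring it back to ${\rm bmo}_\eta(\R_{+,k})$; the handling of the distinction between genuine functions and classes modulo constants (relevant only when $\eta={\bf 0}$, where one argues modulo constants throughout) also needs attention. The local-versus-global bookkeeping — in particular that the local atomic decomposition of $F$ in Theorem \ref{thm:1(loc)} has supports in $\R_{+,k}$ and that the breaking point $1$ in ${\rm bmo}$ is harmless (Proposition \ref{pro:app2}, Remark \ref{rm:4}) — is routine but must be invoked explicitly.
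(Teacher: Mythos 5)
Your proposal is correct and follows essentially the same route the paper takes: the paper's own proof simply says to copy the global argument from Theorem~\ref{thm:3} \emph{mutatis mutandis}, replacing Theorems~\ref{thm:1} and~\ref{thm:2} by their local versions Theorems~\ref{thm:1(loc)} and~\ref{thm:2(loc)}, and your write-up spells out exactly that substitution (including the use of $F=(\calE^\eta f)\ind_{\RR^{\eta,0}_+}$, the subspace $X$ of $h^1(\R)$ of functions vanishing on $\RR^{\eta,1}_+\setminus\R_{+,k}$, the separate treatment of $\eta={\bf 0}$ modulo constants, and the choice of representative $\widetilde B$ vanishing on $(\RR^{\eta,1}_+)^c$). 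The only cosmetic issue is the undefined citation key in ``\texttt{\textbackslash cite\{G\}}'' for Goldberg's local $h^1$--${\rm bmo}$ duality, which is not in the paper's bibliography.
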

\begin{proof}
In the first part of the proof we copy, \textit{mutatis mutandis}, the argument from the proof of Theorem \ref{thm:3}. Obviously, in place of
Theorems \ref{thm:1} and \ref{thm:2} we now use their local versions, Theorems \ref{thm:1(loc)} and \ref{thm:2(loc)}. The same local versions are used in 
the second part of the proof with necessary changes.
\end{proof}

\begin{remark} \label{rem:fin}
In the general case of a strongly Lipschitz domain $\Omega$ it was proved  in  \cite[Theorem 5 (b), (d)]{AR}-arXiv, that the duals 
of $H^1_z(\Omega)$ and $H^1_r(\Omega)$ are $\BMO_r(\Omega)$ and $\BMO_z(\Omega)$, respectively. Therefore, in the general setting, since $C_+$ is a special 
Lipschitz domain, it follows that for  $\eta={\rm triv}$ or  $\eta={\rm sgn}$ we have: the dual of $H^1_{\rm triv}(C_+)$ is $\BMO_r(C_+)$, the dual of 
$H^1_{\sgn}(C_+)$ is $\BMO_z(C_+)$. 
\end{remark}

The proposition concluding this section  establishes relations between the spaces $\BMO^1_\eta(\R_{+,k})$ for different $\eta$'s (cf. also 
Corollary \ref{rem:com_bmo}) with analogous statement in the local case, and is, in some sense, dual to Proposition \ref{prop:rel}.
\begin{proposition} \label{prop:relBMO}
If $\eta^{(1)}\le\eta^{(2)}$ (the lexicographical order), then $\BMO_{\eta^{(2)}}(\R_{+,k})\hookrightarrow \BMO_{\eta^{(1)}}(\R_{+,k})$. 
In particular, we have 
$$
\BMO_{\textbf{1}}(\R_{+,k})\hookrightarrow \BMO_\eta(\R_{+,k})\hookrightarrow \BMO_{\textbf{0}}(\R_{+,k}).
$$
Analogous statements hold for the local spaces ${\rm bmo}_{\eta}(\R_{+,k})$. 
\end{proposition}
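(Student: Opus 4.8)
The plan is to argue exactly as in the proof of Proposition \ref{prop:rel}, but invoking the equivalence $(1)\Longleftrightarrow(3)$ of Theorem \ref{thm:2} (respectively Theorem \ref{thm:2(loc)}) together with a monotonicity of the regions $\RR^{\eta,1}_+$ rather than of the regions $\RR^{\eta,0}_+$. First I would record that for $\eta^{(1)}\le\eta^{(2)}$ (with the order as in Proposition \ref{prop:rel}) one has $\{i\colon\eta^{(1)}_i=1\}\subset\{i\colon\eta^{(2)}_i=1\}$, and therefore
$$
\RR^{\eta^{(2)},1}_+=\bigcap_{i\colon\eta^{(2)}_i=1}\langle e_{d-k+i}\rangle^\bot_+\ \subset\ \bigcap_{i\colon\eta^{(1)}_i=1}\langle e_{d-k+i}\rangle^\bot_+=\RR^{\eta^{(1)},1}_+ .
$$
Note that this is the reverse of the inclusion $\RR^{\eta^{(1)},0}_+\subset\RR^{\eta^{(2)},0}_+$ used for the Hardy spaces, which is precisely why the $\BMO$ embedding runs in the opposite direction --- consistent with the duality between conditions (3), (4) of Theorem \ref{thm:1} and conditions (3), (4) of Theorem \ref{thm:2}.

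Given this, the argument is immediate. Fix $b\in\BMO_{\eta^{(2)}}(\R_{+,k})$. By the implication $(1)\Rightarrow(3)$ of Theorem \ref{thm:2} there is $F\in\BMO(\R)$ supported in $\RR^{\eta^{(2)},1}_+$ with $F|_{\R_{+,k}}=b$ and $\|F\|_{\BMO(\R)}\lesssim\|b\|_{\BMO_{\eta^{(2)}}(\R_{+,k})}$. By the inclusion above $F$ is also supported in $\RR^{\eta^{(1)},1}_+$, so the implication $(3)\Rightarrow(1)$ of Theorem \ref{thm:2}, now read with $\eta^{(1)}$, gives $b\in\BMO_{\eta^{(1)}}(\R_{+,k})$ together with $\|b\|_{\BMO_{\eta^{(1)}}(\R_{+,k})}\lesssim\|F\|_{\BMO(\R)}\lesssim\|b\|_{\BMO_{\eta^{(2)}}(\R_{+,k})}$; this is the asserted continuous embedding. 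The local statement then follows verbatim, replacing Theorem \ref{thm:2} by Theorem \ref{thm:2(loc)} and $\BMO$ by $\mathrm{bmo}$ throughout. Finally, since $\textbf{0}\le\eta\le\textbf{1}$ for every $\eta\in\Z_2^k$, taking $(\eta^{(1)},\eta^{(2)})=(\eta,\textbf{1})$ and $(\eta^{(1)},\eta^{(2)})=(\textbf{0},\eta)$ yields the displayed chain $\BMO_{\textbf{1}}(\R_{+,k})\hookrightarrow\BMO_\eta(\R_{+,k})\hookrightarrow\BMO_{\textbf{0}}(\R_{+,k})$, and likewise in the local case.

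The only point needing care --- the main, though minor, obstacle --- is the usual bookkeeping at $\eta=\textbf{0}$: elements of $\BMO_{\textbf{0}}(\R_{+,k})$ are classes modulo constants while for $\eta\neq\textbf{0}$ they are genuine functions, so the symbol $\hookrightarrow$ landing in $\BMO_{\textbf{0}}(\R_{+,k})$ must be understood in the sense of the convention adopted before Proposition \ref{prop:diff} (passing from a function $b$ to its class $[b]$). One checks that this causes no trouble, since Theorem \ref{thm:2} already produces the extension $F$ at the level of classes when $\eta=\textbf{0}$; no strictness of the inclusions is claimed here (it would in any case follow from Proposition \ref{prop:diff}), so nothing further is required.
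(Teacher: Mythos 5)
Your proof is correct and follows essentially the same route as the paper: establish $\RR^{\eta^{(2)},1}_+\subset\RR^{\eta^{(1)},1}_+$ and then feed this into the equivalence $(1)\Leftrightarrow(3)$ of Theorem~\ref{thm:2} (resp.\ Theorem~\ref{thm:2(loc)} for the local case). Your extra remarks on the reversal relative to Proposition~\ref{prop:rel} and on the modulo-constants convention at $\eta=\textbf{0}$ simply make explicit what the paper leaves implicit.
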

\begin{proof}
If $\eta^{(1)}\le\eta^{(2)}$, then  $\RR^{\eta^{(2)},1}_+\subset \RR^{\eta^{(1)},1}_+$. Therefore the claimed continuous embedding is an obvious 
consequence of the equivalence $(1)\Leftrightarrow(3)$ in Theorem \ref{thm:2}. For the local case we repeat the same argument using 
Theorem \ref{thm:2(loc)}. 
\end{proof}


\begin{thebibliography}{99}


\bibitem{AR} P. Auscher, E. Russ, Hardy spaces and divergence operators on strongly Lipschitz domains of $R^n$, 
J. Funct. Anal. 201 (2003), 148--184.

\bibitem{ART} P. Auscher, E. Russ, P. Tchamitchian, Hardy Sobolev spaces on strongly Lipschitz domains of $R^n$, 
J. Funct. Anal. 218 (2005), 54--109.

\bibitem{C}  D.-C. Chang, The dual of Hardy spaces on a bounded domain in $\mathbb R^n$, Forum Math. 6 (1994), 65--81.

\bibitem{CKS} D.-C. Chang, S.G. Krantz, E.M. Stein, $H^p$ theory on a smooth domain in $R^N$ and elliptic boundary value problems, 
J. Funct. Anal. 114 (1993), 286--347.

\bibitem{CDS}  D.-C. Chang, G. Dafni,  E.M. Stein, Hardy spaces, BMO, and boundary value problems for the Laplacian on a smooth domain, 
Trans. Amer. Math. Soc. 351 (1999), 1605--1661.

\bibitem{CW} R. R. Coifman, G. Weiss, Extensions of Hardy spaces and their use in analysis, Bull. Amer. Math. Soc.  83 (1977), 569--645.

\bibitem{Co} J. B. Conway, A Course in Functional Analysis, Springer, Second Edition, Springer, 1990. 

\bibitem{CMT} M. Costabel, A. McIntosh, R. Taggart, Potential maps, Hardy spaces, and tent spaces on special Lipschitz domains, Publ. Mat. 57 (2013), 295--331.

\bibitem{DDSY} D. Deng, X. T. Duong, A. Sikora, L. Yan, Comparison of the classical BMO with the BMO spaces associated with operators and applications, Rev. Mat. Iberoam. 24 (2008), 267--296.

\bibitem{DY} X. T. Duong, L. Yan, Duality of Hardy and BMO spaces associated with operators with heat kernel bounds, Trans. Amer. Math. Soc. 18 (2005), 943--973.

\bibitem{DX} C. Dunkl, Yuan Xu, Orthogonal polynomials of several variables, Encyclopedia Math. Appl. 81, Cambridge Univ. Press, 2001. 

\bibitem{G-CRdF} J. Garci\'a-Cuerva, J.L. Rubio de Francia, Weighted Norm Inequalities and Related Topics, North-Holland, Amsterdam, 1985.

\bibitem{Hu} J. E. Humphreys, Reflection groups and Coxeter groups, Cambridge Univ. Press, 1990.

\bibitem{JSW} A. Jonsson, P. Sj\"ogren, H. Wallin, Hardy and Lipschitz spaces on subsets of $\mathbb R^n$, Studia Math.  
80 (1984), 141--166.

\bibitem{Ka} R. Kane, Reflection groups and Invariant Theory, CMS Books in Mathematics, Springer,  2001. 

\bibitem{MS} J. Ma\l{}ecki, K. Stempak, Reflection principles for functions of Neumann and Dirichlet Laplacians on open reflection invariant subsets of $\R$,   
Studia Math. 251 (2020), 171--193. 

\bibitem{M} A. Miyachi, $H^p$ spaces over open subsets of $\mathbb R^n$, Studia Math.  95 (1990), 205--228.

\bibitem{Sem}  S. Semmes, A primer on Hardy spaces, and some remarks on a theorem of Evans and M\"uller, Comm. Partial Diff. Eq., 19 (1994), 277--319.

\bibitem{Stein}  E. M. Stein, Singular integrals and differentiability properties of functions,  Princeton Univ. Press, 1970.

\bibitem{St}  E. M. Stein, Harmonic Analysis: Real-Variable Methods, Orthogonality, and Oscillatory Integrals, 1st ed., Princeton Univ. Press, 1993.

\bibitem{S1}  K. Stempak, Finite reflection groups and symmetric extensions of Laplacian,  Studia Math. 261 (2021), 241--267. 

\bibitem{S2}  K. Stempak, The Laplacian with mixed Dirichlet-Neumann boundary conditions on Weyl chambers, J. Diff. Equations (2022), 348--370. 

\end{thebibliography}
\end{document}